\def\l@subsection{\@tocline{2}{0pt}{2.5pc}{5pc}{}}
\theoremstyle{plain}
\newtheorem{thm}{Theorem}
\newtheorem{introcor}[thm]{Corollary}
\newtheorem{theorem}{Theorem}[section]
\newtheorem*{thmB}{Theorem A*}
\newtheorem{prop}[theorem]{Proposition}
\newtheorem{lm}[theorem]{Lemma}
\newtheorem{cor}[theorem]{Corollary}
\newtheorem*{conj}{Conjecture}
\theoremstyle{definition}
\newtheorem{defn}[theorem]{Definition}
\newtheorem{nota}[theorem]{Notation}
\newtheorem{rmk}[theorem]{Remark}
\newtheorem{set}[theorem]{Setting}
\newtheorem{ex}[theorem]{Example}
\newtheorem*{thm*}{Theorem}
\newtheorem*{ex*}{Example}
\newcommand\sO{{\mathcal O}}
\newcommand\OO{{\mathcal O}}
\newcommand\inv{^{-1}}
\newcommand\ol{\overline}
\newcommand\wt{\widetilde}
\newcommand{\Pico}[1]{{\rm Pic}^0(#1)}
\DeclareMathOperator{\Pic}{Pic}
\newcommand\pp{{\mathbf{P}}}
\title
{Effective characterization of quasi-abelian surfaces}
\author{Margarida Mendes Lopes}
\address{CAMGSD/Departamento de Matem\'atica,   Instituto Superior T\'ecnico, Universidade de Lisboa,  Av. Rovisco Pais, 1049-001 Lisboa, Portugal  }
\email{\url{mmendeslopes@tecnico.ulisboa.pt}}
\author{Rita Pardini}
\address{Dipartimento di Matematica, Universit\`{a} degli studi di Pisa, Largo Pontecorvo 5, 56127 Pisa (PI), Italy}
\email{\url{rita.pardini@unipi.it}}
\author{Sofia Tirabassi}
\address{Department of Mathematics, Stockholm University, Kr\"aftriket, Stockholm, Sweden}
\email{\url{tirabassi@math.su.se}}
\keywords{Quasi-abelian varieties, Quasi-abelian surfaces, Semi-abelian varieties, Semi-abelian surfaces, Open surfaces, Logarithmic plurigenus, Quasi-Albanese morphism, birational geometry of log-varieties, WWPB-maps, affine varieties}
\subjclass[2020]{Primary 14E05, Secondary 14J10, 14K99, 14L20, 14L40, 14R05}
\newlist{todolist}{itemize}{2}
\setlist[todolist]{label=$\square$}
\newcommand{\cmark}{\ding{51}}%
\newcommand{\done}{\rlap{$\square$}{\raisebox{2pt}{\large\hspace{1pt}\cmark}}%
\hspace{-2.5pt}}
\newlist{donelist}{itemize}{2}
\setlist[donelist]{label=\done}
\begin{document}

\begin{abstract}
 Let $V$ be a smooth quasi-projective complex surface such that the  first three logarithmic plurigenera $\ol P_1(V)$, $\ol P_2(V)$ and $\ol P_3(V)$ are equal to 1 and the logarithmic irregularity $\ol q(V)$ is equal to $2$. We prove that the quasi-Albanese morphism $a_V\colon V\to A(V)$ is birational and there exists a finite set $S$ such that $a_V$ is proper over $A(V)\setminus S$, thus giving a sharp effective version of a classical result of Iitaka (\cite{Ii79}).
\end{abstract}

 \maketitle
 

\tableofcontents
\section*{Introduction}

Let $V$ be a smooth complex quasi-projective  variety.   By Hironaka's theorem on the resolution of  singularities we can write $V=X\backslash D$ where $X$ is a smooth projective variety and $D$ is a reduced divisor on $X$ with simple normal crossings support (in fact in the case of surfaces, our main interest here,  $V$  is automatically  quasi-projective since it is smooth). \par
 Denoting by $K_X$ the canonical divisor  of $X$, one  defines the following invariants of $V$:
\begin{itemize}[label=-]
 \item for $m$ a positive integer, the $m$-th \emph{log-plurigenus} of $V$ is $\overline{P}_m(V):=h^0(X,m(K_X+D))$;
 \item the \emph{log-Kodaira dimension} of $V$ is $\overline{\kappa}(V):=\kappa(X,K_X+D);$
 \item the \emph{log-irregularity} of $V$ is $\overline{q}(V):=h^0(X,\Omega^1_X(\log D))$.
\end{itemize}
In addition, we say that the \emph{irregularity} $q(V)$ of $V$ is the irregularity of $X$, that is
$$q(V):=h^0(X,\Omega^1_X).
$$
It easy to see that these invariants do not depend on the choice of the compactification $X$.\par
Similarly to what happens for  projective varieties, to $V$ we can associate a quasi-abelian variety (i.e., an algebraic group which does not contain $\mathbb{G}_a$) $A(V)$, called the \emph{ quasi-Albanese variety} of $V$. This comes equipped with a morphism $a_V\colon V\rightarrow A(V)$ which is called the \emph{quasi-Albanese morphism}. Iitaka in \cite{Ii79} characterizes quasi-abelian surfaces as surfaces of log-Kodaira dimension 0 and log-irregularity 2. More precisely he proves the following.
\begin{thm*}[Iitaka, \cite{Ii79}]
{\em  Let $V$ be a smooth complex algebraic surface. Then $\overline{\kappa}(V)=0$ and $\overline{q}(V)=2$ if, and only if, the quasi-Albanese morphism $a_V\colon V\rightarrow A(V)$ is birational and there are an open subset $V^0\subseteq V$, and finitely many points $\{p_1,\ldots p_t\}\subseteq A(V)$, such that the restriction $a_{V|_{V^0}}\colon V^0\rightarrow A(V)\backslash \{p_1,\ldots p_t\}$ is proper.}
\end{thm*}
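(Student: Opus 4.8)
The statement is an equivalence, and I would prove the two implications separately, the forward direction $(\Rightarrow)$ carrying essentially all of the difficulty. For the reverse implication, I would use that $\overline{\kappa}$ and $\overline{q}$ are invariants of the birational class of a smooth quasi-projective variety, so that if $a_V$ is birational it suffices to compute them for the quasi-abelian surface $A(V)$ itself. Writing $A(V)$ as an extension $1\to T\to A(V)\to A_0\to 0$ with $T\cong\mathbb{G}_m^{t}$ a torus and $A_0$ an abelian variety, I would choose a smooth equivariant compactification $\ol A\supseteq A(V)$ with simple normal crossings boundary $\ol D$. The invariant logarithmic $1$-forms trivialize $\Omega^1_{\ol A}(\log\ol D)\cong\OO_{\ol A}^{\oplus 2}$, so $K_{\ol A}+\ol D\cong\OO_{\ol A}$; hence $\overline{P}_m(A(V))=h^0(\OO_{\ol A})=1$ for every $m$, giving $\overline{\kappa}(A(V))=0$, while $\overline{q}(A(V))=h^0(\Omega^1_{\ol A}(\log\ol D))=2$. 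Birational invariance transfers both equalities to $V$; note that only the birationality of $a_V$ is needed here, not the properness.

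For the forward implication I would first record that the cotangent space of $A(V)$ at the identity is $H^0(X,\Omega^1_X(\log D))$, so that $\dim A(V)=\overline{q}(V)=2=\dim V$, and $a_V^{\ast}$ identifies the invariant log $1$-forms on $A(V)$ with a basis of $H^0(X,\Omega^1_X(\log D))$. The decisive input is the logarithmic analogue of Kawamata's theorem characterizing abelian varieties: for a smooth variety of log-Kodaira dimension $0$ the quasi-Albanese map is dominant with connected general fibre. Granting this, $a_V$ is dominant, and since $\dim V=\dim A(V)$ it is generically finite; a dominant generically finite map with connected general fibre has degree $1$, so $a_V$ is birational. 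If one prefers to establish dominance by hand, a curve image $W=\ol{a_V(V)}$ is excluded as follows: $W$ generates $A(V)$, hence is not a translate of a $1$-dimensional semi-abelian subvariety, so $\overline{\kappa}(W)=1$ by the logarithmic Ueno theorem; easy addition forces the general fibre to have non-negative log-Kodaira dimension, and logarithmic subadditivity ($C_{n,1}$) over $W$ then yields $\overline{\kappa}(V)\ge\overline{\kappa}(W)=1$, a contradiction.

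For the properness statement I would fix $V=X\setminus D$ and $A(V)=\ol A\setminus\ol D$ as above and resolve the rational map $X\dashrightarrow\ol A$ induced by $a_V$ to a projective birational morphism $f\colon X'\to\ol A$ with $X'\to X$ an isomorphism over $V$ (the indeterminacy lies in $D$, since $a_V$ is already a morphism on $V$). Because $K_{\ol A}+\ol D$ is trivial and $\overline{\kappa}(V)=0$, the morphism $f$ is log-crepant, and I would exploit this rigidity to show that any boundary component of $V$ whose image meets the interior $A(V)$ must be contracted by $f$ to a point. Collecting these finitely many image points into a set $S$, the restriction of $a_V$ over $A(V)\setminus S$ becomes proper---indeed an isomorphism outside $S$ by Zariski's main theorem---which is exactly the asserted conclusion.

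The step I expect to be the genuine obstacle is this last boundary analysis: one must pin down precisely which components of $D$ can meet the interior of $A(V)$ and prove that those that do are contracted, controlling simultaneously the behaviour at the boundary of the target. This demands a careful accounting of the logarithmic discrepancies of $f$ and of the residues of the global log $1$-forms along $D$, for which the triviality of $K_{\ol A}+\ol D$ and the rigidity forced by $\overline{\kappa}(V)=0$ are the essential ingredients; turning the finiteness of $S$ into a sharp, effective bound is what the remainder of the paper will have to address.
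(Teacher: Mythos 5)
You should know at the outset that the paper does not prove this statement: it is quoted from Iitaka \cite{Ii79}, and the introduction says explicitly that the paper's argument ``is completely independent of Iitaka's Theorem and its proof''; what the paper proves is the effective analogue (Theorem \ref{Main}) in \S\ref{sec: main}. Judged on its own merits, your proposal contains a genuine error in the reverse implication: logarithmic Kodaira dimension and log-irregularity are \emph{not} invariants of the birational class of an open variety. The paper itself flags this with exactly the relevant counterexample: if $X$ is an abelian surface and $D\succ 0$ a smooth ample divisor, then $V=X\setminus D$ is birational to $X$, one computes $\ol q(V)=2$ (the residue sequence plus $[D]\neq 0$), the quasi-Albanese map of $V$ is the open inclusion $V\hookrightarrow X$ --- birational! --- and yet $\ol\kappa(V)=\kappa(X,D)=2\neq 0$. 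So your parenthetical ``only the birationality of $a_V$ is needed here, not the properness'' is exactly backwards, and your first paragraph proves nothing. Properness over the complement of a finite set is precisely what makes $a_V$ a WWPB-map in Iitaka's sense, and the reverse direction needs the (nontrivial) invariance of $\ol P_m$ and $\ol q$ under WWPB-equivalence \cite{Ii77}, not under birational equivalence.

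In the forward direction, obtaining birationality from Kawamata \cite{Ka81}, or via your $C_{2,1}$ fallback for dominance, is legitimate; but the properness assertion --- the real content --- is not established, and your key structural claim is false. The logarithmic ramification formula gives $K_{X'}+D'\sim f^*(K_{\ol A}+\ol D)+\ol R_f=\ol R_f$ with $\ol R_f\succeq 0$, and $\ol\kappa(V)=0$ yields only $\kappa(X',\ol R_f)=0$, i.e.\ that $\ol R_f$ is rigid, not that $f$ is log-crepant: any boundary component contracted to a point of $A(V)$ --- precisely the components you must produce --- genuinely appears in $\ol R_f$, as do exceptional divisors lying over $\ol D$. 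Deducing from rigidity that every non-contracted component of $D$ maps into $\ol D$ is the whole theorem and does not follow formally; in the paper's effective setting this step occupies all of \S\ref{sec: main}, via connectedness and adjoint-genus estimates for $H=g^{-1}(\Delta)$ (Lemmas \ref{lem: pg0}--\ref{lem: C}), a Stein-factorization and \'etale-cover argument combined with the universal property of $a_V$ (Lemma \ref{lem: etale}), and a local multiplicity computation for rulings, none of which your ``accounting of logarithmic discrepancies'' supplies. You candidly identify this boundary analysis as the obstacle, but as written the proposal defers, rather than proves, the properness half of the equivalence.
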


In this paper, 
  we give a characterization of quasi-abelian surfaces using the three first logarithmic plurigenera instead of the logarithmic Kodaira dimension.  Our main result is the following Theorem:

\begin{thm}\label{Main}
Let $V$ be a smooth complex algebraic surface with  $\overline{q}(V)=2$. 
Assume that:
\begin{itemize}
\item[(a)] $\overline{P}_1(V)=\overline{P}_2(V)=1$ and $q(V)>0$; or
\item[(b)] $\overline{P}_1(V)=\overline{P}_3(V)=1$ and $q(V)=0$. 
\end{itemize}

Then the quasi-Albanese morphism $a_V\colon V\rightarrow A(V)$ is birational. In addition, there are an open subset $V^0\subseteq V$ and finitely many points $\{p_1,\ldots p_t\}\subseteq A(V)$, such that the restriction $a_{V|_{V^0}}\colon V^0\rightarrow A(V)\backslash \{p_1,\ldots p_t\}$ is proper.
\end{thm}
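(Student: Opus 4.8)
The plan is to deduce the statement from Iitaka's theorem quoted above by proving that the hypotheses force $\overline{\kappa}(V)=0$; since $\overline{q}(V)=2$ is already assumed, both the birationality of $a_V$ and the properness over the complement of a finite set then follow verbatim from that theorem, and no separate argument for those two conclusions is needed. Fix a smooth compactification $X\supseteq V$ with $D=X\setminus V$ reduced and simple normal crossings. In both cases (a) and (b) we have $\overline{P}_1(V)=h^0(X,K_X+D)=1>0$, hence $\overline{\kappa}(V)\ge 0$, so the entire problem is the upper bound $\overline{\kappa}(V)\le 0$, i.e. that $\overline{P}_m(V)$ stays equal to $1$ for all $m$. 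The point of the numerical hypotheses is to make this bound effective: I will argue by contradiction that $\overline{\kappa}(V)\ge 1$ already forces one of the low plurigenera $\overline{P}_2$ (when $q(V)>0$) or $\overline{P}_3$ (when $q(V)=0$) to be at least $2$.

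To organize the argument I would exploit the quasi-Albanese map $a_V\colon V\to A(V)$, where $A(V)$ is a quasi-abelian surface sitting in an extension $1\to T\to A(V)\to A_0\to 0$ with $\dim A_0=q(V)$ and $T\cong\mathbb{G}_m^{\,2-q(V)}$; this is exactly what separates case (a), where the abelian part $A_0$ is nontrivial, from the pure-torus case (b). Choosing a basis $\omega_1,\omega_2\in H^0(X,\Omega^1_X(\log D))$ (independent since $\overline{q}(V)=2$), the logarithmic Castelnuovo--de Franchis lemma gives a clean dichotomy: either $\omega_1\wedge\omega_2\equiv 0$, in which case $a_V$ factors through a fibration $f\colon V\to B$ onto a smooth curve with $\overline{q}(B)=2$ whose image generates $A(V)$; or $\omega_1\wedge\omega_2\ne 0$, in which case $a_V$ is dominant and generically finite and $s:=\omega_1\wedge\omega_2$ is a nonzero section of $K_X+D$.

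In the curve-image case I would first invoke Kawamata's structure theorem for subvarieties of semi-abelian varieties: the image is not contained in any translate of a proper subgroup (it generates $A(V)$), so its stabilizer is trivial and it is of log-general type, whence $\overline{\kappa}(B)=1$. Logarithmic subadditivity over a curve base (Kawamata's $\overline{C}_{2,1}$) then gives $\overline{\kappa}(V)\ge \overline{\kappa}(F)+\overline{\kappa}(B)\ge 1$ as soon as the general fibre $F$ is non-rational, while the rational-fibre subcase is strongly constrained, and nearly excluded, by $\overline{P}_1(V)=1>0$. Computing $\overline{P}_m(V)=h^0\big(B,f_*(m(K_X+D))\big)$ through the semipositivity of $f_*\big(m(K_X+D)\big)$ and the linear growth of $h^0\big(B,m(K_{\overline B}+\mathrm{punctures})\big)$ on a log-general-type curve, one sees the count must eventually exceed $1$; the delicate task is to show that the first such $m$ is $2$ when $A_0\ne 0$ and $3$ in the torus case, which is precisely where $\overline{P}_2=1$, resp. $\overline{P}_3=1$, produces the contradiction.

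In the dominant case, since $\overline{P}_1(V)=1$ the section $s$ has $Z:=\operatorname{div}(s)\ge 0$ as the unique member of $|K_X+D|$, so $\overline{P}_m(V)=h^0(X,mZ)$. Taking the Zariski decomposition $Z=P+N$ on the surface $X$ (with $P$ nef), one has $\overline{\kappa}(V)=\kappa(X,P)$ and $h^0(mZ)=h^0(\lfloor mP\rfloor)$, so $\overline{\kappa}(V)\ge 1$ is equivalent to $P\not\equiv 0$, i.e. $P^2>0$ (log-general type) or $P^2=0,\ P\not\equiv 0$ (the intermediate case). The crux of the proof—and the step I expect to be the main obstacle—is to rule these out effectively: a Riemann--Roch estimate for $h^0(mP)$, fed by the numerical invariants $\chi(\mathcal O_X)$ and $K_X\cdot P$ coming from the generically finite map to the quasi-abelian surface $A(V)$, must be pushed to show $h^0(2Z)\ge 2$ when $q(V)>0$ and $h^0(3Z)\ge 2$ when $q(V)=0$, contradicting the hypotheses. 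Establishing these thresholds, and confirming their sharpness against explicit examples of surfaces with $\overline{q}=2$, $\overline{P}_1=\overline{P}_2=1$ and $\overline{\kappa}\ge 1$ in the torus case (which is what forces the use of $\overline{P}_3$ there), is the real content; once $\overline{\kappa}(V)=0$ is in hand, the conclusion follows directly from Iitaka's theorem.
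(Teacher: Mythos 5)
Your reduction to Iitaka's theorem is legitimate in principle: since $\overline q(V)=2$ is assumed, establishing $\overline\kappa(V)=0$ would indeed yield both the birationality of $a_V$ and the properness over the complement of a finite set, verbatim from the theorem quoted in the introduction (note, though, that the paper deliberately proves everything independently of Iitaka's theorem). The genuine gap is that your proposal never proves $\overline\kappa(V)=0$: at both decisive points you explicitly defer the argument. In the curve-image case, subadditivity $\overline C_{2,1}$ and semipositivity of $f_*\bigl(m(K_X+D)\bigr)$ are asymptotic statements; they give $\overline\kappa(V)\ge 1$ but say nothing effective at $m=2$ or $m=3$, and your phrase ``the delicate task is to show that the first such $m$ is $2$ \dots and $3$'' names exactly the theorem to be proved. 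This is not a routine sharpening: Example \ref{rem: sharp} (a K3 with three fibers of type $IV^*$, minus their supports) shows that in the torus case the fibration scenario genuinely occurs with $\overline P_1(V)=\overline P_2(V)=1$ and $\overline q(V)=2$, so excluding it at $m=3$ requires the full analysis of \S\ref{sec: dominant} (the divisor $B$ with $K_X+B\sim\sum_j E_j$ of Lemma \ref{lem: B}, the intersection bounds of Lemmas \ref{ei}--\ref{-10}, and the Euler-number/star-fiber classification in Lemma \ref{genus1}), none of which your sketch supplies or replaces. Likewise, in the dominant case, ``a Riemann--Roch estimate for $h^0(mP)$ \dots must be pushed to show $h^0(2Z)\ge 2$'' is an unproven hope: Zariski decomposition plus Riemann--Roch does not by itself produce two sections of $2Z$ or $3Z$, and in case (a) with $q(V)=1$ the implication ``$\overline\kappa(V)\ge 1$ forces $\overline P_2\ge 2$'' is logically equivalent to the theorem itself. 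The paper obtains the needed nonvanishing statements from generic vanishing for log-canonical pushforwards (Propositions \ref{prop:axsu} and \ref{prop:not effective}, Corollary \ref{cor: albdim1}), a tool entirely absent from your plan.

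Two smaller points. First, ``the rational-fibre subcase is \emph{strongly constrained, and nearly excluded}, by $\overline P_1(V)=1>0$'' is imprecise: fibers with $\overline\kappa=-\infty$ are indeed excluded (all log pluriforms would vanish on them), but fibers isomorphic to $\mathbb G_m$ or of log general type survive, and the genus-$0$ fiber case is exactly where the paper's hardest computation lives (the end of the proof of Proposition \ref{prop: dominant0}). Second, even granting $\overline\kappa(V)=0$, your route outsources birationality and properness entirely to Iitaka, whereas the paper derives them directly via the logarithmic ramification formula, the \'etale-cover and topological argument of Lemma \ref{lem: etale}, and the local computation \eqref{eq: forms} contracting the residual boundary; this is what makes the paper's proof self-contained and effective. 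As it stands, your proposal is a plausible strategy outline whose two central effectivity claims coincide with the paper's main content and are left unestablished.
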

Using the language of WWPB equivalences introduced by Iitaka in \cite{Ii77}, 
Theorem \ref{Main} above can be rephrased in the following manner:
\begin{thmB}
Let $V$ be a smooth algebraic surface. Then $V$ is WWPB equivalent to a quasi-abelian variety if, and only if  either 
\begin{itemize}
\item[(a)] $\overline q(V)=2$, $\overline{P}_1(V)=\overline{P}_2(V)=1$ and $q(V)>0$; or
\item[(b)] $\overline q(V)=2$ , $\overline{P}_1(V)=\overline{P}_3(V)=1$ and $q(V)=0$. 
\end{itemize} 
\end{thmB}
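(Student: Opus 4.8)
The plan is to deduce Theorem A* from Theorem \ref{Main} together with Iitaka's theorem, which is the bridge between the plurigenus hypotheses and the logarithmic Kodaira dimension. The reformulation hinges on two facts: first, that WWPB equivalence to a quasi-abelian variety is characterized by the geometric conclusion of Theorem \ref{Main} (birationality of $a_V$ together with properness of $a_V$ over the complement of finitely many points); and second, that this geometric conclusion is equivalent to the condition $\overline\kappa(V)=0$, $\overline q(V)=2$ by Iitaka's theorem. So the strategy is to show that the numerical hypotheses (a) and (b) are, under the standing assumption $\overline q(V)=2$, equivalent to $\overline\kappa(V)=0$, and then invoke the established dictionary between the geometric conclusion and WWPB equivalence.

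First I would establish the backward implication ($\Leftarrow$). Assuming (a) or (b), Theorem \ref{Main} gives directly that $a_V$ is birational and proper over $A(V)\setminus\{p_1,\dots,p_t\}$. I then need to package this into a WWPB equivalence with $A(V)$: since $A(V)$ is a quasi-abelian variety, and $a_V$ restricts to a proper birational morphism onto the complement of a finite set, standard properties of WWPB-maps (as set up in \cite{Ii77}) show that $V$ and $A(V)$ are WWPB equivalent. This is essentially the content of recognizing that a birational morphism which is proper away from finitely many points is a WWPB-map in Iitaka's sense.

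For the forward implication ($\Rightarrow$), I would assume $V$ is WWPB equivalent to a quasi-abelian variety $G$. Since the log-plurigenera $\overline P_m$ and the log-irregularity $\overline q$ are WWPB-invariants, and a quasi-abelian variety $G$ of dimension $2$ satisfies $\overline P_m(G)=1$ for all $m$ and $\overline q(G)=2$, we immediately get $\overline P_1(V)=\overline P_2(V)=\overline P_3(V)=1$ and $\overline q(V)=2$. It remains to separate the two cases according to the value of $q(V)$: when $q(V)>0$ we land in case (a), and when $q(V)=0$ we land in case (b). Since $q(V)\ge 0$ always, exactly one of the two alternatives holds, and in each the relevant plurigenera are already shown to equal $1$; thus either (a) or (b) is satisfied.

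The main obstacle is verifying that the geometric conclusion of Theorem \ref{Main} is precisely equivalent to WWPB equivalence with a quasi-abelian variety, i.e.\ making the dictionary between the two formulations rigorous in both directions. In one direction one must check that the WWPB-invariance of the $\overline P_m$ and $\overline q$ (together with their values on quasi-abelian surfaces) is enough to force hypotheses (a)/(b); this is the cleaner direction. In the other direction, one must argue that a birational quasi-Albanese morphism which is proper over the complement of a finite set genuinely realizes a WWPB equivalence, which requires unwinding Iitaka's definition of WWPB-maps and confirming that the indeterminacy introduced by the finite set $\{p_1,\dots,p_t\}$ is permissible. Once this equivalence of formulations is pinned down, Theorem A* follows formally from Theorem \ref{Main} by the case analysis on $q(V)$ described above.
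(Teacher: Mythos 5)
Your proposal is correct and takes essentially the same route as the paper, which presents Theorem A* as a direct rephrasing of Theorem \ref{Main}: the forward implication follows from Iitaka's result in \cite{Ii77} that WWPB-equivalent varieties have the same $\overline{P}_m$ and $\overline{q}$, combined with $\overline{P}_m=1$ and $\overline{q}=2$ for quasi-abelian surfaces (Proposition \ref{prop: compactification}) and the case split on $q(V)$; the backward implication follows because the conclusion of Theorem \ref{Main} (birational $a_V$, proper over the complement of finitely many points) is precisely a WWPB equivalence with $A(V)$ in Iitaka's sense. One small remark: the detour through $\overline{\kappa}(V)=0$ and Iitaka's classification theorem announced in your opening paragraph is never actually used in your two implications and is unnecessary, consistent with the paper's statement that its argument is independent of Iitaka's theorem.
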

As WWPB-maps between normal affine varieties are actually isomorphisms (see \cite[Corollary on p. 498]{Ii77}), we have:
\begin{introcor}
A smooth complex affine surface $V$ is isomorphic to $\mathbb{G}_m^2$ if, and only if, it has $\overline{P}_1(V)=\overline{P}_3(V)=1$ and $\overline{q}(V)=2$.
\end{introcor}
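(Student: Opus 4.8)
The plan is to prove both implications, with the substance concentrated in the converse. For the \emph{direct} implication I would simply compute the invariants of the torus. Compactify $V=\mathbb{G}_m^2$ by $X=\mathbb{P}^1\times\mathbb{P}^1$ with $D$ the four boundary lines $\{0,\infty\}\times\mathbb{P}^1\,\cup\,\mathbb{P}^1\times\{0,\infty\}$. Then $K_X+D\sim 0$, so $\overline{P}_m(V)=h^0(X,\mathcal O_X)=1$ for every $m$, in particular $\overline{P}_1(V)=\overline{P}_3(V)=1$; and $\Omega^1_X(\log D)\cong\mathcal O_X^{\oplus 2}$ is generated by $\frac{dx}{x}$ and $\frac{dy}{y}$, so $\overline{q}(V)=2$. (Conceptually this just reflects that a quasi-abelian variety has trivial log-canonical class and log-irregularity equal to its dimension.)

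For the \emph{converse}, the first step is to bring the hypotheses into the range of Theorem~A*. I would observe that $\overline{P}_1(V)=\overline{P}_3(V)=1$ already forces $\overline{P}_2(V)=1$: in the log-canonical ring $R=\bigoplus_m H^0\big(m(K_X+D)\big)$ we have $R_1=\langle\sigma\rangle$ and $R_3=\langle\sigma^3\rangle$, and since $R$ is a domain any $\tau\in R_2$ satisfies $\sigma\tau\in R_3=\langle\sigma^3\rangle$, whence $\tau\in\langle\sigma^2\rangle$ and $\overline{P}_2(V)=1$. Combined with $\overline{q}(V)=2$, this places $V$ in case (a) of Theorem~A* when $q(V)>0$ and in case (b) when $q(V)=0$; in either case $V$ is WWPB equivalent to a quasi-abelian surface, and by Theorem~\ref{Main} the quasi-Albanese $a_V\colon V\to A(V)$ is birational and restricts to a proper morphism $V^0\to A^0:=A(V)\setminus\{p_1,\dots,p_t\}$.

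The crux is to show that the quasi-abelian surface one lands on is the \emph{affine} one, i.e.\ $A(V)\cong\mathbb{G}_m^2$. Here I would use affineness of $V$ twice. Since $V$ is affine it contains no complete positive-dimensional subvariety, so the proper birational morphism $V^0\to A^0$ has no positive-dimensional fibre; being proper, quasi-finite and birational onto the smooth (hence normal) variety $A^0$, it is an isomorphism. Consequently $\mathcal O(V^0)\cong\mathcal O(A^0)=\mathcal O(A(V))$, the last equality holding because $A(V)$ is smooth and we only removed finitely many codimension-$2$ points. On the other hand $V^0$ is dense in the affine surface $V$, so $\mathcal O(V)\hookrightarrow\mathcal O(V^0)$ gives $\operatorname{trdeg}_{\mathbb C}\mathcal O(V^0)=2$. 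Hence $\operatorname{trdeg}_{\mathbb C}\mathcal O(A(V))=2$. But the ring of regular functions on a semi-abelian variety is the group algebra of its character lattice, whose rank equals the dimension of its torus part, so transcendence degree $2$ forces $A(V)$ to be a full two-dimensional torus, that is $A(V)\cong\mathbb{G}_m^2$ (and in particular $q(V)=0$). Finally, $V$ and $\mathbb{G}_m^2$ are both normal affine surfaces that are WWPB equivalent, so by \cite[Corollary on p.~498]{Ii77} the equivalence is an isomorphism and $V\cong\mathbb{G}_m^2$.

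The main obstacle is the identification of the torus in the third paragraph, and the reason a naive argument fails is worth flagging. One is tempted to contradict affineness of $V$ by pulling back a complete curve from $A(V)$; this disposes of the cases where $A(V)$ is an abelian surface or a split or torsion $\mathbb{G}_m$-extension of an elliptic curve, but a \emph{non-torsion} extension of an elliptic curve by $\mathbb{G}_m$ contains \emph{no} complete curve at all, so the complete-curve argument cannot exclude it. What rescues the proof is the finer observation that such an extension carries too few regular functions (transcendence degree $<2$) to host a dense open subset of the affine surface $V$; the transcendence-degree computation handles all non-torus cases uniformly and is therefore the step I would treat with the most care.
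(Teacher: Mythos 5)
Your proof is correct and takes essentially the same route as the paper, whose own derivation is the one-liner ``Theorem~A* plus Iitaka's result that WWPB maps between normal affine varieties are isomorphisms''; the extra steps you supply --- $\overline{P}_2(V)=1$ via multiplication in the log-canonical ring, and the identification $A(V)\cong\mathbb{G}_m^2$ through the ring of regular functions, which is genuinely needed before Iitaka's corollary can be applied --- are precisely the details the paper leaves implicit. One harmless slip to fix: the character lattice of a semi-abelian variety has rank only \emph{at most} the dimension of its torus part (for a non-torsion $\mathbb{G}_m$-extension $G$ of an elliptic curve one has $\mathcal{O}(G)=\mathbb{C}$, so the rank drops to $0$, as your own closing paragraph in effect records), but since this inequality still yields transcendence degree $\le 1<2$ in every non-torus case, your conclusion that $A(V)\cong\mathbb{G}_m^2$ is unaffected.
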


We remark that  Kawamata, in his celebrated work \cite{Ka81}, proved in any dimension a weaker form of  Iitaka's Theorem, showing that the quasi-Albanese morphism of an algebraic variety of log-Kodaira dimension 0 and log-irregularity equal to the dimension is birational. In the compact case, effective versions of this result  have been given in  \cite{CH01} and  \cite{PPS17}: in \cite{CH01}, it is proven that the Albanese map of a projective variety $X$ is surjective and birational iff  $\dim X=q(X)$ and $P_1(X)=P_2(X)=1$; in \cite{PPS17}, the analogous statement is proven for compact K\"ahler manifolds.

So we are led to formulate the following conjecture:
\begin{conj}
 Let $V$ a smooth complex quasi-projective  variety with $\overline{q}(V)=\dim V$, then there exists a positive integer $k$, independent of the dimension of $V$, such that $\overline{P}_1(V)=\overline{P}_k(V)=1$ implies that the quasi-Albanese morphism of $V$ is birational (and there exist an open set $V^0\subseteq V$ and  a closed set $W\subseteq A(V)$ of codimension $>1$ such that $a_{V|_{V^0}}\colon V^0\rightarrow A(V)\backslash W$ is proper).
\end{conj}

 In view of the results recalled above,  one might hope  that $k=2$ is the right bound also in the open setting, but in fact our Theorem \ref{Main} is sharp because there is a surface  $V$ with  $\ol   P_1(V)=\ol P_2(V)=1$, $\ol q(V)=2$  and quasi-Albanese map not dominant (see Example  \ref{rem: sharp}).

Our argument  is completely independent of Iitaka's Theorem and its proof.
The first  step  consists in   showing that the assumptions on the logarithmic irregularity and on the logarithmic plurigenera imply that the quasi-Albanese map of $V$ is dominant. 
When $ q(V)=2$,  one obtains as an immediate consequence of the work of Chen and Hacon  (\cite{CH01}) that the quasi-Albanese map of $V$ is birational. 
 When $q(V)=0$, the lengthy proof uses a fine analysis of the boundary divisor and classical  arguments from the theory of surfaces. However, when $q(V)=1$, we get   a considerably shorter proof leveraging on the fact that the Albanese map of a compactification of $V$ is nontrivial, using techniques coming from Green--Lazarsfeld generic vanishing theorems (\cite{GL1987}) and their more recent extensions to pairs (see \cite{PS14} and \cite{Sh16}). 
 \par
Once we know that the quasi-Albanese map of $V$ is dominant, then we get that its extension to the compactifications of $V$ and $A(V)$ respectively is generically finite. Then we  use Iitaka's logarithmic ramification formula (see \ref{LogRam} for more details) together with some geometric considerations to conclude that the quasi-Albanese map is birational. \par

The last step of our argument consists in proving that the quasi-Albanese map is proper outside a finite set of points. This is done by showing that the boundary divisor $D$ gets contracted by the quasi-Albanese map. Again, the key tool here is the logarithmic ramification formula together with some more classical arguments exploiting the geometry of the divisor $D$.\par
The paper is organized as follows.  In \S \ref{sec: prelim} we recall the necessary prerequisites; \S \ref{any dim} contains some results that hold in arbitrary dimension for $q(V)>0$ (see in particular  Corollary \ref{thm:qx=2}). Then we focus on surfaces. Section \ref{sec: dominant} is devoted to proving that the quasi-Albanese map is dominant: here the results of  \S  \ref{any dim} are crucial in case $q(V)>0$. In \S \ref{sec: main} we complete the proof of Theorem \ref{Main}.

\noindent{\bf Notation.} 
We work over the complex numbers. If $X$ is  a smooth projective  variety, we denote by  $K_X$ the canonical class, by $q(X):=h^0(X,\Omega^1_X)=h^1(X,\mathcal O_X)$ the \emph{irregularity } and by  $p_g(X):=h^0(X,K_X)$ the {\em geometric genus}.

We identify invertible sheaves and Cartier divisors and we use the additive and multiplicative notation interchangeably. Linear equivalence is denoted by $\sim$. Given two divisors $D_1$ and $D_2$ on $X$, we write $D_1\succeq D_2$ (respectively $D_1\succ D_2$) if the divisor $D_1-D_2$ is (strictly) effective.

Finally note that  throughout this paper, we use the term \emph{(-1)-curve} to indicate the total transform $E$ of a point $q\in Y$ via a birational morphism $g\colon X\to Y$ of smooth projective surfaces such $g\inv$ is not defined at $q$; so one has $E^2=K_XE=-1$ but $E$ may be reducible and/or non reduced.  

\vspace{.5cm}\noindent{\bf Acknowledgments.}  
We wish to thank the referee for the extremely careful reading of the paper.

M. Mendes Lopes  was partially supported by  FCT/Portugal  through Centro de An\'alise Matem\'atica, Geometria e Sistemas Din\^amicos (CAMGSD), IST-ID, projects UIDB/04459/2020 and UIDP/04459/2020.  R. Pardini was  partially supported by  project PRIN 2017SSNZAW$\_$004 ``Moduli Theory and Birational Classification"  of Italian MIUR and is a member of GNSAGA of INDAM.   S.Tirabassi was partially supported by  grant 261756 of the Research Council of Norway and  grant KAW 2019.0493 of Knut and Alice Wallenberg 
Foundation.

\section{Preliminaries}\label{sec: prelim}
Our proof of Theorem \ref{Main} combines different  arguments and techniques; in this section  we recall briefly the necessary   prerequisites and set the notation. 
\subsection{Log varieties}

Let $X$ be a smooth complex projective variety of dimension $n$. A reduced effective divisor $D=\sum D_i$ on $X$ is said to have \emph{simple normal crossings} (in short we say $D$ is {\em snc}) if all the $D_i$ are smooth and  for every $p\in\operatorname{Supp} D$  there are local coordinates $(x_1,\ldots,x_n)$ 
around $p$ such that $D$ is cut out by the equation $x_1\cdots x_r=0$, for some  $r\leq n$.

Given a smooth projective $n$-dimensional variety $X$ together with a snc divisor $D$, 
we can define the sheaf of logarithmic 1-forms along $D$ by setting
$$
\Omega^1_X(\log D)_p:=\sum_{i=1}^r\mathcal{O}_{X,p}\frac{dx_i}{x_i}+\sum_{i=r+1}^n\mathcal{O}_{X,p}dx_i \subset(\Omega^1_X\otimes k(X))_p
$$
where $(x_1,\ldots,x_n)$ are local coordinates around $p$ such that $D=\{x_1\cdots x_r=0\}$. It is a locally free sheaf of rank equal to $n$. 
The sheaf of logarithmic $m$-forms is defined as
$$\Omega_X^m(\log D):=\bigwedge^m\Omega_X(\log D),
$$
and, in particular, we have that 
$$
\Omega_X^n(\log D)\simeq \mathcal{O}_X(K_X+D).
$$

We recall the following condition for the existence of logarithmic 1-forms with prescribed poles. 

\begin{prop} \label{prop: form}
Let   $D$ be a simple normal crossing divisor on a smooth projective variety $X$ and let $D_1, \dots D_k$ be a subset of the irreducible  components of $D$.

There exists   $\sigma\in H^0(X,\Omega^1_X(\log D))$ with poles precisely on $D_1 \dots D_k$  if and only if in $H^2(X,\mathbb Z)$ there is a relation $\sum_i a_i [D_i ]=0$  between the classes  $[D_1],\dots [D_k]$  such that  $a_i\ne0$ for $i=1,\dots k$.
\end{prop}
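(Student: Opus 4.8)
The plan is to analyze the logarithmic residue sequence and pass to cohomology. Writing $D=\sum_{i=1}^N D_i$ for the decomposition into irreducible components, the snc hypothesis gives the short exact sequence
\[
0 \to \Omega^1_X \to \Omega^1_X(\log D) \xrightarrow{\ \mathrm{res}\ } \bigoplus_{i=1}^N \mathcal{O}_{D_i} \to 0,
\]
where $\mathrm{res}$ records the Poincaré residue along each component. The first observation I would record is that, since $X$ is smooth projective, every global logarithmic $1$-form is $d$-closed (a standard consequence of Deligne's mixed Hodge theory), so its residue along each $D_i$ is a holomorphic, hence constant, function on the projective variety $D_i$. Thus taking global sections yields a linear residue map
\[
\mathrm{res}\colon H^0(X,\Omega^1_X(\log D)) \to \bigoplus_{i=1}^N \mathbb{C},\qquad \sigma \mapsto (a_1(\sigma),\dots,a_N(\sigma)),
\]
and by construction $\sigma$ has a genuine logarithmic pole along $D_i$ precisely when $a_i(\sigma)\neq 0$. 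Hence producing a form with poles exactly on $D_1,\dots,D_k$ is equivalent to finding an element of the image of $\mathrm{res}$ whose coordinates are nonzero exactly in positions $1,\dots,k$.

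Next I would identify the image of $\mathrm{res}$ using the long exact cohomology sequence
\[
0 \to H^0(X,\Omega^1_X) \to H^0(X,\Omega^1_X(\log D)) \xrightarrow{\ \mathrm{res}\ } \bigoplus_{i=1}^N \mathbb{C} \xrightarrow{\ \delta\ } H^1(X,\Omega^1_X).
\]
The key computation is that the connecting map $\delta$ sends the $i$-th standard generator to the class $c_1(\mathcal{O}_X(D_i))=[D_i]$ in $H^1(X,\Omega^1_X)\cong H^{1,1}(X)$; this is the standard fact that the extension class of the residue sequence is given by the Chern classes of the components. Consequently $(a_1,\dots,a_N)$ lies in the image of $\mathrm{res}$ if and only if $\sum_i a_i[D_i]=0$ in $H^{1,1}(X)$. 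In particular, a form with poles exactly on $D_1,\dots,D_k$ corresponds to a complex relation $\sum_{i=1}^k a_i[D_i]=0$ with every $a_i\neq 0$.

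It remains to compare such a complex relation with an integral one in $H^2(X,\mathbb{Z})$ while preserving the condition that all coefficients are nonzero. Since each $[D_i]$ is an algebraic class of type $(1,1)$, the relation $\sum_{i=1}^k a_i[D_i]=0$ holds in $H^{1,1}(X)$ if and only if it holds in $H^2(X,\mathbb{C})$, and the solution space $\{(x_1,\dots,x_k): \sum_{i=1}^k x_i[D_i]=0\}$ is a linear subspace of $\mathbb{C}^k$ defined over $\mathbb{Q}$, as the $[D_i]$ are integral. A solution with all coordinates nonzero exists over $\mathbb{C}$ exactly when this rational subspace is not contained in any coordinate hyperplane, which by density of rational points is equivalent to the existence of a rational, hence after clearing denominators an integral, solution with all coordinates nonzero. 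Combining the three steps gives the biconditional.

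The step I expect to be the main obstacle is the precise identification of the connecting homomorphism $\delta$ with the classes $[D_i]$, together with the closedness of global logarithmic forms; these are the genuine Hodge-theoretic inputs. By contrast, the translation between ``nonzero residue'' and ``genuine pole'' is immediate from the definition of the residue, and the final descent from $\mathbb{C}$ to $\mathbb{Z}$ is elementary linear algebra over $\mathbb{Q}$.
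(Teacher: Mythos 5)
Your proof is correct and follows essentially the same route as the paper: the residue sequence $0\to\Omega^1_X\to\Omega^1_X(\log D)\to\oplus_i\mathcal O_{D_i}\to 0$ together with the identification of the coboundary map with $1\mapsto [D_i]\in H^1(X,\Omega^1_X)$, which is exactly the paper's one-line argument. You additionally spell out details the paper leaves implicit (constancy of residues, and the descent from a complex relation in $H^{1,1}(X)$ to an integral one, where one should also note that a rational relation only makes $\sum_i a_i[D_i]$ torsion in $H^2(X,\mathbb Z)$, so one multiplies by the torsion exponent), but the substance is identical.
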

\begin{proof}
Let $D_1, \dots D_t$ be the irreducible components of $D$; consider the residue sequence
$$0\to \Omega^1_X\to \Omega^1_X(\log D) \to \oplus_{i=1}^t\OO_{D_i}\to 0.$$
The claim follows from the fact that the associated coboundary map $\delta\colon \oplus_{i=1}^tH^0(X,\OO_{D_i})\to H^1(X,\Omega^1_X)$ sends $1\in H^0(X,\OO_{D_i})$ to the class $[D_i]\in H^1(X,\Omega^1_X)$.
\end{proof}

Given a smooth quasi-projective variety $V$, by Hironaka's resolution of singularities we can embed it in a smooth projective variety $X$ such that $X\backslash V$ is a snc divisor $D$,  called the \emph{boundary of $V$}.
Then we can use the sheaves of logarithmic forms to define logarithmic  (in short ``log'') invariants  on $V$,  as explained in the introduction. In the case when $V$ is a curve, we call $\overline{P}_1(V)=\overline{q}(V)$ the logarithmic genus of $V$.\par

\indent While the usual plurigenera and irregularity are birational invariants, this is not the case for logarithmic invariants. For example, an abelian variety $X$ and $X\backslash D$, where  $D>0$ is a smooth ample divisor, are birational  but they have different  logarithmic Kodaira dimensions. For this reason Iitaka in \cite {Ii77} introduced the notion of WPB-equivalence ({\em weakly proper birational} equivalence). 
It is the equivalence relation generated by the proper birational morphisms and by the open inclusions $V\subset  V'$ such that $V'\setminus V$ has codimension at least 2. \par
WPB-equivalent varieties have the same plurigenera and log irregularity: this is obvious for open immersions as above and it is proven  for proper birational morphisms  in  \cite[Prop. 1]{Ii77b}. Therefore, it would seem that WPB-equivalence is the right notion of equivalence when studying the birational geometry of open varieties. However, the set of WPB-maps is not \emph{saturated}, namely  it is possible, for instance,  to have rational maps  $g\colon U\dashrightarrow V$  and $f\colon V\dashrightarrow W$ such that $f\circ g$  is WPB, but $f$ or $g$ is  not. In order to get around this kind of difficulty,  Iitaka in \cite{Ii77} defines the notion of WWPB-equivalence, proving that WWPB-equivalent varieties have the same logarithmic  plurigenera  and irregularity.

\subsection{Quasi-abelian varieties and the quasi-Albanese map}\label{qAbelian}
Here we  introduce one of the main characters of our story: quasi-abelian varieties, which for many aspects can be thought of as a non-projective analogue of abelian varieties. We recall briefly the facts that we need. 
\begin{defn} A  \emph{quasi-abelian variety} - in some sources also called a \emph{semiabelian variety} - is a connected algebraic group $G$ that is an extension of an abelian variety $A$ by an algebraic torus. More precisely, $G$ sits in the middle of an exact sequence of the form

 \begin{equation}\label{eq:sucQA}1\rightarrow \mathbb{G}_m^r\longrightarrow G\longrightarrow A\rightarrow 0.\end{equation}
 We call $A$ the {\em compact part}  and $\mathbb{G}_m^r$ the {\em linear part} of $G$.
 \end{defn}

Over the complex numbers, $G\simeq \mathbb{C}^{\dim G}/\pi_1(G)$. Observe that $\pi_1(G)$ is a finitely generated free abelian group. When the rank of $\pi_1(G)$ is equal to  $2\dim (G)$, then  $G$ is  an abelian variety.\par

For later use we recall the following from \cite[\S 10]{III}:
\begin{prop}\label{prop: compactification}
 Let $G$ be a quasi abelian variety,  let $A$ be its compact part and let $r:=\dim G-\dim A$. Then there exists a compactification $G\subset Z$ such that:
\begin{itemize}
\item[(a)] $Z$  is a $\mathbb P^r$-bundle over $A$;
\item[(b)] $\Delta:=Z\setminus G$ is a simple normal crossing divisor and  $\Omega^1_{Z}(\log \Delta)$ is a trivial bundle of rank equal to $\dim G$.
\end{itemize}
In particular, $\ol q(G)=\dim G$ and $\ol P_m (G)=1$ for all $m>0$.
\end{prop}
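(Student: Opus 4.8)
The plan is to build $Z$ as a fibrewise toric compactification of the principal $\mathbb{G}_m^r$-bundle $G\to A$ and to trivialize $\Omega^1_Z(\log\Delta)$ by exhibiting an explicit global frame of translation-invariant logarithmic $1$-forms. First I would record that the extension \eqref{eq:sucQA} is classified by an $r$-tuple $(L_1,\dots,L_r)\in\Pico{A}^{\,r}$, via the canonical isomorphism $\operatorname{Ext}^1(A,\mathbb{G}_m)\cong\Pico{A}$; concretely $G$ is the fibre product over $A$ of the complements of the zero sections in the total spaces of the $L_i$. I would then set $\mathcal{E}:=\sO_A\oplus L_1\oplus\cdots\oplus L_r$ and define $Z:=\mathbb{P}(\mathcal{E})$, a $\mathbb{P}^r$-bundle $\pi\colon Z\to A$, which settles (a). The inclusion $G\subset Z$ is the locus where all $r+1$ homogeneous fibre coordinates are nonzero, and $\Delta=Z\setminus G=D_0+\cdots+D_r$, where $D_i$ is the sub-$\mathbb{P}^{r-1}$-bundle obtained by deleting the $i$-th summand of $\mathcal{E}$. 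Each $D_i$ is smooth of codimension $1$ and, since on every fibre $\mathbb{P}^r$ they restrict to the $r+1$ coordinate hyperplanes, they meet transversally; hence $\Delta$ is snc.

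For the triviality of $\Omega^1_Z(\log\Delta)$ I would use the logarithmic relative cotangent sequence
$$0\to \pi^*\Omega^1_A \to \Omega^1_Z(\log\Delta)\to \Omega^1_{Z/A}(\log\Delta)\to 0.$$
The subsheaf is trivial because $\Omega^1_A\cong\sO_A^{\oplus g}$ (with $g=\dim A$) and pullback preserves triviality. The quotient restricts on each fibre to $\Omega^1_{\mathbb{P}^r}(\log(\text{coordinate hyperplanes}))\cong\sO_{\mathbb{P}^r}^{\oplus r}$, the classical toric computation with frame $\frac{dx_i}{x_i}-\frac{dx_0}{x_0}$. The crucial point, and the place where the hypothesis $L_i\in\Pico{A}$ is indispensable, is to upgrade this fibrewise triviality to a global one: since each $L_i$ is algebraically trivial it carries a flat holomorphic connection, and the associated ``angular'' form, restricting to $\frac{dt_i}{t_i}$ on the fibres and corrected by the translation-invariant connection form $\pi^*\alpha_i$, is a single-valued closed logarithmic $1$-form on $Z$ with poles on $D_0+D_i$. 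Together with the $g$ pulled-back invariant forms $\pi^*\omega_1,\dots,\pi^*\omega_g$ on $A$, these $r$ forms constitute a global frame, so $\Omega^1_Z(\log\Delta)\cong\sO_Z^{\oplus(g+r)}$ of rank $g+r=\dim G$, proving (b). Conceptually this is the bundle analogue of the fact that a smooth toric variety has trivial log cotangent bundle, and $Z$ is precisely a $G$-equivariant (fibrewise toric) compactification on which the invariant vector fields frame $T_Z(-\log\Delta)$.

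The two numerical statements are then immediate. Since $Z$ is projective and connected, $h^0(Z,\sO_Z)=1$, so $\ol q(G)=h^0(Z,\Omega^1_Z(\log\Delta))=\dim G$. Moreover $\sO_Z(K_Z+\Delta)=\Omega^{\dim Z}_Z(\log\Delta)=\det\Omega^1_Z(\log\Delta)$ is trivial, whence $m(K_Z+\Delta)\sim 0$ and $\ol P_m(G)=h^0(Z,\sO_Z)=1$ for every $m>0$.

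I expect the main obstacle to be the middle paragraph: proving the \emph{global} triviality of $\Omega^1_{Z/A}(\log\Delta)$ and hence the splitting of the relative cotangent sequence. Fibrewise triviality is standard, but passing to a global frame genuinely requires the flat structure coming from $L_i\in\Pico{A}$; without it (for a line bundle with nonzero first Chern class) the forms $\frac{dt_i}{t_i}$ globalize only up to the cocycle $d\log g_{ij}$, which represents a nonzero class and obstructs triviality. Verifying the single-valuedness and the exact logarithmic pole order of the globalized forms along $\Delta$ is the technical heart of the argument.
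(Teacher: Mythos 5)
Your proof is correct and is essentially the argument behind the paper's treatment: the paper does not prove Proposition \ref{prop: compactification} but recalls it from \cite[\S 10]{III}, where the compactification is exactly the projectivization $\mathbb P(\sO_A\oplus L_1\oplus\cdots\oplus L_r)$ of the bundles $L_i\in\Pic^0(A)$ classifying the extension, with $\Omega^1_Z(\log\Delta)$ trivialized by the pulled-back invariant forms on $A$ together with the closed logarithmic $1$-forms obtained from the flat structures on the $L_i$. Your observation that algebraic triviality of the $L_i$ (hence flatness) is what upgrades the fibrewise toric trivialization to a global frame is precisely the crux of the cited construction.
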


We are especially interested in the following particular case of Proposition \ref{prop: compactification}:
\begin{cor} \label{cor: Z}
Let $G$ be a quasi-abelian variety of dimension 2 with compact part $A$ of dimension 1. Then there is a compactification $G\subset Z$,  where $Z=\mathbb P (\OO_A\oplus L)$ with $L\in \Pic^0(A)$ and the boundary  $\Delta$ is the disjoint union of two sections $\Delta_1$ and $\Delta_2$ of $Z\to A$. 
\end{cor}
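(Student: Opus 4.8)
The plan is to derive Corollary~\ref{cor: Z} as a special case of Proposition~\ref{prop: compactification}. We are given a quasi-abelian surface $G$ whose compact part $A$ is an elliptic curve, so in the sequence \eqref{eq:sucQA} the linear part is $\mathbb{G}_m^r$ with $r=\dim G-\dim A = 2-1 = 1$. Applying Proposition~\ref{prop: compactification} directly yields a compactification $G\subset Z$ with $Z$ a $\mathbb{P}^1$-bundle over $A$ and $\Delta=Z\setminus G$ a snc divisor. The task is then to identify this $\mathbb{P}^1$-bundle explicitly as $\mathbb{P}(\OO_A\oplus L)$ with $L\in\Pic^0(A)$ and to show the boundary consists of two disjoint sections.

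First I would analyze the structure of the boundary. Since $\mathbb{G}_m = \mathbb{P}^1\setminus\{0,\infty\}$ is obtained by removing two points from $\mathbb{P}^1$, the fibrewise compactification adds two points to each fibre of $G\to A$, and these sweep out two sections $\Delta_1,\Delta_2$ of $Z\to A$; so $\Delta=\Delta_1+\Delta_2$ has two components, each mapping isomorphically to $A$. That $\Delta$ is snc (from part~(b) of the Proposition) together with the fact that each $\Delta_i$ is a section forces $\Delta_1$ and $\Delta_2$ to be disjoint: two sections of a $\mathbb{P}^1$-bundle that met would have to meet transversally in the fibre, but a single fibre $\mathbb{P}^1$ cannot contain the intersection of two distinct sections as a normal-crossing point unless they are disjoint (two sections of a ruled surface either coincide on a fibre or are disjoint there, and snc rules out tangency/coincidence). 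I would make this precise by noting that $\Delta_1\cap\Delta_2$ is a finite set of points mapping injectively to $A$, and any such intersection point would violate the snc condition combined with the group structure (the $\mathbb{G}_m$-action extends and fixes the two sections, precluding them from meeting).

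Next I would identify the bundle. A $\mathbb{P}^1$-bundle over $A$ admitting two disjoint sections is the projectivization of a rank-$2$ bundle that splits as a direct sum of two line bundles, $Z=\mathbb{P}(M_1\oplus M_2)$, the two sections corresponding to the two summands; normalizing, $Z\cong\mathbb{P}(\OO_A\oplus L)$ with $L=M_2\otimes M_1^{-1}$. To pin down $L\in\Pic^0(A)$, I would invoke the triviality of $\Omega^1_Z(\log\Delta)$ from part~(b): this bundle has rank $\dim G = 2$ and is trivial. Using the residue/relative logarithmic cotangent sequence along the ruling $Z\to A$, the relative logarithmic differentials $\Omega^1_{Z/A}(\log\Delta)$ form the line bundle $\OO_Z(-\Delta_1-\Delta_2)\otimes\OO_Z(\Delta_1+\Delta_2)$ type expression which, restricted to the sections, controls the self-intersection numbers $\Delta_i^2$; triviality of the full logarithmic cotangent bundle forces $\Delta_1^2=\Delta_2^2=0$, hence $\deg L=0$, i.e. $L\in\Pic^0(A)$.

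The main obstacle I anticipate is the last identification step: carefully extracting the degree of $L$ from the triviality of $\Omega^1_Z(\log\Delta)$. One must chase the logarithmic cotangent sequence $0\to f^*\Omega^1_A\to\Omega^1_Z(\log\Delta)\to\Omega^1_{Z/A}(\log\Delta)\to 0$ (where $f\colon Z\to A$ is the projection), observe that $f^*\Omega^1_A\cong\OO_Z$ since $A$ is an elliptic curve, and deduce that the relative logarithmic cotangent line bundle is also trivial; this relative triviality is exactly what forces the two sections to have trivial normal bundles and self-intersection zero, giving $L\in\Pic^0(A)$. Everything else is a routine unwinding of the $\mathbb{P}^1$-bundle picture, so I would present the argument as an application of Proposition~\ref{prop: compactification} with attention focused on this numerical computation.
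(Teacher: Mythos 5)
Your reduction to Proposition~\ref{prop: compactification} is the same starting point as the paper's proof, but the key step of your argument --- extracting $\deg L=0$ from the triviality of $\Omega^1_Z(\log\Delta)$ via the relative sequence $0\to f^*\Omega^1_A\to\Omega^1_Z(\log\Delta)\to\Omega^1_{Z/A}(\log\Delta)\to 0$ --- does not work, because the quotient carries no numerical information whatsoever. For $Z=\mathbb P(\OO_A\oplus L)$ with boundary the two tautological sections one has $\OO_Z(\Delta_2)=\OO_Z(\Delta_1)\otimes f^*L$ and $\omega_{Z/A}=\OO_Z(-\Delta_1-\Delta_2)\otimes f^*(L\otimes L^{-1})$, so that $\Omega^1_{Z/A}(\log\Delta)=\omega_{Z/A}(\Delta_1+\Delta_2)\cong\OO_Z$ for \emph{every} $L\in\Pic(A)$, of any degree; likewise $f^*\Omega^1_A\cong\OO_Z$ since $A$ is elliptic. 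Thus $\Omega^1_Z(\log\Delta)$ is an extension of $\OO_Z$ by $\OO_Z$ for all $L$, the self-intersections $\Delta_i^2=\mp\deg L$ are invisible to both the sub and the quotient, and the dependence on $\deg L$ sits entirely in the extension class in $H^1(Z,\OO_Z)$ --- equivalently, in whether a second independent global logarithmic $1$-form exists. The paper's proof uses exactly that: since $h^0(\Omega^1_Z(\log\Delta))=2>q(Z)=1$, there is a logarithmic form with a genuine polar divisor supported on $\Delta$, and Proposition~\ref{prop: form} (the residue sequence) forces a relation $a_1[\Delta_1]+a_2[\Delta_2]=0$ in $H^2(Z,\mathbb Z)$ with both $a_i\neq 0$; intersecting with a fibre gives $a_1+a_2=0$, hence $[f^*L]=[\Delta_2]-[\Delta_1]=0$ and $\deg L=0$. (An alternative repair would be Barsotti--Weil, $\operatorname{Ext}^1(A,\mathbb G_m)\cong\Pic^0(A)$, but that is not the route you proposed.)

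There are two further gaps upstream. First, you assert that the two boundary points in each fibre ``sweep out two sections'': from the statement of Proposition~\ref{prop: compactification} alone, $\Delta$ is only known to be a bisection, and a priori it could be an \emph{irreducible} curve, étale of degree $2$ over $A$, with monodromy exchanging the two points. The paper excludes this case again by Proposition~\ref{prop: form}: a logarithmic form with poles precisely on an irreducible $\Delta$ would force $[\Delta]=0$ in $H^2(Z,\mathbb Z)$, impossible since $\Delta\cdot F=2$. Second, your disjointness argument is incorrect as stated: the snc condition does \emph{not} preclude the two sections from meeting --- transverse crossings are precisely what ``normal crossings'' allows, and two distinct sections of a ruled surface can perfectly well meet transversally. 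What actually gives disjointness is that $\Delta$ meets every fibre in exactly two distinct points (each fibre of the boundary is $\mathbb P^1\setminus\mathbb G_m$), so $\Delta\to A$ is étale of degree $2$ and its components cannot intersect; your alternative appeal to an extended $\mathbb G_m$-action fixing the sections could be made rigorous, but the cited Proposition does not assert that the group action extends to $Z$, so this would require an additional argument.
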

\begin{proof} By Proposition \ref{prop: compactification} we may find a compactification $Z$ which is a $\mathbb P^1$-bundle over $A$ and such that $\Delta$ is a normal crossing divisor. The divisor $\Delta$ meets every fibre of $Z\to A$ in exactly two points, so it is a smooth bisection. By Proposition \ref{prop: compactification}, there is a  non regular logarithmic 1-form of $Z$   with poles contained in $\Delta$, hence  by Proposition \ref{prop: form} $\Delta$ is reducible and its components satisfy a non trivial relation in cohomology. Since $\Delta$ is a smooth bisection of $Z\to A$, we have $\Delta=\Delta_1+\Delta_2$ with $\Delta_i$ disjoint sections. So $Z=\pp (\OO_A\oplus L)$ for some $L\in \Pic(A)$.  Since  in $\Pic(Z)$ we have 
$\Delta_2=\Delta_1+p^*L$, where $p\colon Z\to A$ is the natural projection, $\Delta_1$ and $\Delta_2$ are independent in $H^2(Z,\mathbb Z)$ unless $p^*L=0$ in $H^2(Z,\mathbb Z)$, namely unless $p^*L\in \Pic^0(Z)$. So  Proposition \ref{prop: form} implies that $L$ is an element of $\Pic^0(A)$.\end{proof} 

Finally we  recall some  fundamental properties of abelian varieties that extend verbatim to the quasi-abelian case:
\begin{prop} \label{prop: quasiab}
Let $G$ be a quasi-abelian variety. Then:
\begin{enumerate}
\item if $G'$ is a quasi-abelian variety and $\phi\colon G'\to G$ is a morphism with $\phi(0)=0$, then $\phi$ is a homomorphism;
\item if $G'\to G$ is a finite \'etale cover, then $G'$ is a quasi-abelian variety;
\item if $H\subset G$ is closed and $\ol\kappa(H)=0$ then $H$ is a quasi-abelian variety.
\end{enumerate}
\end{prop}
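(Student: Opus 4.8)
All three assertions are classical for abelian varieties, and the plan is to reduce (i) and (ii) to the abelian case by isolating the toric and abelian parts of the extension \eqref{eq:sucQA}, while (iii) will be deduced from Kawamata's structure theorem.

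For (i) I would work with the defining sequences $1\to T'\to G'\xrightarrow{\pi'}A'\to 0$ and $1\to T\to G\xrightarrow{\pi}A\to 0$, with $T,T'$ tori and $A,A'$ abelian varieties. The fibres of $\pi'$ are translates of the torus $T'$, and any morphism from a torus to an abelian variety is constant (a rational variety carries no nonconstant morphism to an abelian variety); hence $\pi\circ\phi$ is constant along these fibres and descends to a morphism $\bar\phi\colon A'\to A$ with $\bar\phi(0)=0$, which is a homomorphism by the rigidity lemma for abelian varieties. It follows that $\phi$ carries $T'=\ker\pi'$ into $T=\ker\pi$ and fixes the identity, so $\phi|_{T'}$ is a homomorphism of tori (in suitable coordinates each component is an invertible function fixing $1$, hence a character). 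To glue the two pieces I would consider $\psi(x,y):=\phi(x+y)-\phi(x)-\phi(y)$; since $\bar\phi$ is a homomorphism $\psi$ takes values in $T$, and Rosenlicht's theorem on the units of a connected algebraic group forces each of its components to be a character of $G'\times G'$ (the constant factor being pinned down by $\psi(0,0)=0$). As $\psi$ vanishes on $G'\times\{0\}$ and on $\{0\}\times G'$ it vanishes identically, and $\phi$ is a homomorphism.

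For (ii) I would pass to universal covers: a connected finite \'etale cover of $G=\mathbb C^n/\Lambda$ corresponds to a finite-index sublattice $\Lambda'\subseteq\Lambda=\pi_1(G)$, so $G'=\mathbb C^n/\Lambda'$ is a connected commutative complex Lie group isogenous to $G$, and the covering map is a homomorphism. Algebraicity is automatic, since a finite \'etale cover of a variety is a variety and the group law descends. To see that $G'$ is quasi-abelian it suffices, by the characterization recalled in the Introduction, to check that $G'$ contains no $\mathbb G_a$: an inclusion $\mathbb G_a\hookrightarrow G'$ would compose with the finite isogeny $G'\to G$ to a nonconstant homomorphism $\mathbb G_a\to G$, necessarily injective because $\mathbb G_a$ has no nontrivial finite subgroup in characteristic $0$, contradicting the absence of $\mathbb G_a$ in $G$.

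For (iii), the only point requiring a deep input, I would invoke Kawamata's structure theorem for closed subvarieties of semi-abelian varieties \cite{Ka81}: the subvariety $H$ admits a fibration onto a subvariety $\bar H$ of some quotient $G/G_1$ by a quasi-abelian subvariety $G_1$, with fibres translates of $G_1$ and with $\bar H$ of log general type, so that $\ol\kappa(H)=\dim\bar H$. The hypothesis $\ol\kappa(H)=0$ then forces $\bar H$ to be a point, whence $H$ is a single translate of $G_1$ and therefore, up to translation, the quasi-abelian variety $G_1$. The main obstacle is exactly this last item: unlike (i) and (ii), it cannot be reached by an elementary reduction to the abelian case and rests on the semi-abelian analogue of Ueno's theorem; the only mildly delicate point in (i), the vanishing of the cocycle $\psi$, is in turn settled by the elementary description of characters and units on a torus.
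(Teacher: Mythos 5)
Your proposal is correct, but it takes a genuinely different route from the paper: the paper offers no argument of its own, simply attributing (i) to \cite{Nevanlinna}*{Thm.~5.1.37}, (ii) to \cite{Fu14}*{Thm.~4.2}, and (iii) to \cite{III}*{Thm.~4}. By contrast, you give self-contained proofs of (i) and (ii). Your proof of (i) — descending $\pi\circ\phi$ to a homomorphism $\bar\phi\colon A'\to A$ by rigidity (using that a torus admits no nonconstant map to an abelian variety, and that $\mathbb G_m^r$-torsors are Zariski-locally trivial, so the set-theoretic factorization is a morphism), then killing the cocycle $\psi(x,y)=\phi(x+y)-\phi(x)-\phi(y)$ via Rosenlicht's unit theorem and the splitting of characters on a product — is exactly the standard proof behind the cited result, and it is sound: $\psi$ lands in $T$ because $\bar\phi$ is additive, each coordinate of $\psi$ is a unit normalized at the origin, hence a character of $G'\times G'$, and vanishing on the two axes forces $\psi\equiv 0$. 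Your proof of (ii) via the lattice description $G=\mathbb C^n/\Lambda$ is likewise fine; the one point you gloss is why the lifted group law on $G'$ is algebraic, which follows since its graph is a connected component of the fibre product $(G'\times G')\times_G G'$ of algebraic maps along the finite étale cover (and one should say explicitly that $G'$ is taken connected). For (iii) you, like the paper, must invoke a deep input — you cite Kawamata's structure theorem from \cite{Ka81}, the paper cites Iitaka's logarithmic analogue of Ueno's theorem in \cite{III}; these give the same conclusion (with $\ol\kappa(H)=0$ the log-general-type quotient $\bar H$ is a point, so $H$ is a translate of a quasi-abelian subgroup), and you are right that this step admits no elementary reduction to the abelian case. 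What your approach buys is a transparent, essentially elementary treatment of (i) and (ii) making the paper's black boxes explicit; what the paper's approach buys is brevity and precise pointers to the literature.
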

\begin{proof} 
Item (i) is \cite[Thm.~5.1.37]{Nevanlinna}, (ii) is \cite[Thm.~4.2]{Fu14}, and  (iii) is \cite[Thm.~ 4]{III}.
\end{proof}
 \subsubsection{The quasi-Albanese map} 
 The classical construction of the Albanese variety of a projective variety can be extended to the non projective case, by replacing regular 1-forms by  logarithmic ones and  abelian varieties by quasi-abelian ones. The key fact is that by Deligne \cite{De71}  logarithmic 1-forms are closed (for the details of the  construction see \cite{III}, \cite[Section 3]{Fu14}).

 \begin{theorem}\label{thm: quasiA}
 Let  $V$ be a smooth algebraic variety. Then there exists a quasi-abelian variety $A(V)$ and a morphism $a_V\colon V\to A(V)$ such that:
 \begin{enumerate}
 \item if $h\colon V\rightarrow G$  is a morphism to a quasi-abelian variety, then $h$ factors through $a_V$ in a unique way;
 \item if $X$ is a compactification of $V$ with snc  boundary $D$, then  we
have the following exact sequence:
\begin{equation}\label{eq:ses}1\rightarrow \mathbb{G}_m^r\longrightarrow A(V)\longrightarrow A(X)\rightarrow 0,
\end{equation}
where $r=\ol q(V)-q(V)$. 
 \end{enumerate} 
 \end{theorem}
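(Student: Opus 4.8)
The plan is to construct $A(V)$ analytically by integrating logarithmic $1$-forms and then to recognize the resulting complex-torus quotient as an algebraic (quasi-abelian) group. Fix a compactification $X$ of $V$ with snc boundary $D=\sum_{i=1}^t D_i$ and set $W:=H^0(X,\Omega^1_X(\log D))$, a complex vector space of dimension $\ol q(V)$. By Deligne every $\omega\in W$ is a closed holomorphic $1$-form on $V$, so integration along paths defines a period homomorphism $\mathrm{per}\colon H_1(V,\mathbb{Z})\to W^*$, $\gamma\mapsto(\omega\mapsto\int_\gamma\omega)$. First I would show that $\Lambda:=\mathrm{per}(H_1(V,\mathbb{Z}))$ is a discrete subgroup of $W^*$ and set $A(V):=W^*/\Lambda$; fixing a base point $x_0\in V$, the assignment $a_V(x):=\big[\omega\mapsto\int_{x_0}^x\omega\big]$ is then a well-defined holomorphic map $V\to A(V)$, independent of the chosen paths precisely because we quotient by $\Lambda$.

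The next step is to pin down the rank of $\Lambda$ and the group structure, which is where part (ii) is read off. The residue sequence $0\to\Omega^1_X\to\Omega^1_X(\log D)\to\oplus_i\OO_{D_i}\to0$ of Proposition \ref{prop: form} exhibits $H^0(X,\Omega^1_X)\subset W$ as the regular forms, with quotient the $r$-dimensional space of residues, $r=\ol q(V)-q(V)$. On the level of Deligne's mixed Hodge theory this reflects that $W_1H^1(V)=H^1(X)$, that $F^1H^1(V)=W$, and that $\mathrm{Gr}^W_2H^1(V)=\ker\!\big(\oplus_i\mathbb{Z}[D_i]\to H^2(X,\mathbb{Z})\big)$ is a Hodge structure of Tate type of dimension $r$. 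Dualizing $H^0(\Omega^1_X)\hookrightarrow W$ gives a surjection $W^*\to H^0(\Omega^1_X)^*$ whose kernel is the dual residue space. I would then check that $\Lambda$ has rank $2q(V)+r$: its image in $H^0(\Omega^1_X)^*$ is the usual period lattice of $X$ (rank $2q(V)$, quotient the Albanese $A(X)$), while the part of $\Lambda$ in the kernel consists of the $2\pi i$-residue periods of small loops around the $D_i$, a lattice of rank $r$ whose quotient is $(\mathbb{C}^*)^r=\mathbb{G}_m^r$. This produces exactly the exact sequence (ii).

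To see that $A(V)$ is genuinely quasi-abelian (an algebraic group, not merely a complex Lie group) I would identify the extension class: a homologically trivial integral combination $\sum a_iD_i$ spanning the residue space lies in $\Pic^0(X)$, and under the isomorphism $\mathrm{Ext}^1(A(X),\mathbb{G}_m)\cong\Pic^0(X)$ these classes describe precisely the extension (ii); as both $A(X)$ and the class are algebraic, $A(V)$ is a quasi-abelian variety and, by GAGA, $a_V$ is a morphism of varieties. For the universal property (i), given a morphism $h\colon V\to G$ with $G$ quasi-abelian, I would pull back the translation-invariant logarithmic $1$-forms on a smooth compactification of $G$ (which, by Proposition \ref{prop: compactification}, span the cotangent space of $G$ at $0$) to obtain a linear map into $W$; its dual is a linear map $W^*\to\mathrm{Lie}(G)$ carrying $\Lambda$ into $\pi_1(G)$ by naturality of periods, hence descending to a homomorphism $A(V)\to G$ through which $h$ factors. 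Uniqueness follows by normalizing base points to $0$ and using that the image of $a_V$ generates $A(V)$, together with Proposition \ref{prop: quasiab}(i).

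The main obstacle, and the only genuinely non-formal point, is the pair of algebraicity statements hidden in the construction: that $\Lambda$ is a lattice of the asserted rank $2q(V)+r$, and that the resulting complex torus quotient carries an algebraic group structure making $a_V$ a morphism. Both are controlled by the mixed Hodge structure on $H^1(V)$ — equivalently by Deligne's theory of $1$-motives — where the weight filtration yields the toric/abelian splitting and the polarization on $\mathrm{Gr}^W_1H^1(V)$ guarantees that $A(X)$, and hence the whole extension, is projective-algebraic. Everything else (closedness of the log forms, functoriality, the universal factorization) is then formal.
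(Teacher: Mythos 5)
Your proposal is correct and is essentially the paper's own approach: the paper proves Theorem \ref{thm: quasiA} purely by citation (item (i) to Iitaka, item (ii) to Fujino), and those sources construct $A(V)$ exactly as you do, namely as $H^0(X,\Omega^1_X(\log D))^*/\mathrm{per}(H_1(V,\mathbb{Z}))$, using Deligne's closedness of logarithmic $1$-forms, the residue sequence to split off the $r$-dimensional toric part, and the identification of the extension class in $\mathrm{Ext}^1(A(X),\mathbb{G}_m)\cong\Pic^0(A(X))$ via the homologically trivial combinations $\sum a_iD_i$. The genuinely non-formal points you flag rather than fully prove (discreteness and rank $2q(V)+r$ of the period lattice, and algebraicity of the resulting extension) are precisely the points the cited references settle by mixed Hodge theory, so your sketch matches their argument in both structure and substance.
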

 \begin{proof}
 Item (i) is proven in \cite{III}, in the discussion immmediately after Proposition 4. Item  (ii) is  \cite[p. 13]{Fu14}
 \end{proof}
 The variety $A(V)$ is called the {\em quasi-Albanese variety} of $V$ and $a_V$ the {\em quasi-Albanese map}. Note that  the compact part of  $A(V)$ is $A(X)$. 
 We note the following logarithmic version  of Abel's Theorem:
 \begin{prop}\label{prop: log-abel}
  Let $C$ be a smooth curve with $\ol P_1(C)>0$. Then the quasi-Albanese map $a_C\colon C\to A(C)$  is an embedding. 
 \end{prop}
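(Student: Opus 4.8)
\emph{Strategy.} The plan is to reduce the statement to two elementary geometric facts about $a_C$ — injectivity and everywhere-injective differential — and then to prove these separately according to whether the smooth compactification of $C$ is rational or not.

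First I would fix a smooth compactification $\bar C\supseteq C$ with reduced boundary $D=\bar C\setminus C$ of degree $d$, and recall that for a curve one has $\dim A(C)=\ol q(C)=\ol P_1(C)$, so the hypothesis $\ol P_1(C)>0$ says precisely that $A(C)\ne 0$. Set $g:=q(C)=g(\bar C)$. Since a morphism from a smooth curve that is injective and has everywhere injective differential is automatically an embedding (for a smooth curve in characteristic zero this is standard: such a map is a locally closed immersion onto a smooth subcurve), it suffices to prove that $a_C$ is injective and that $(da_C)_p$ is injective for every $p\in C$.

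Next, in the case $g\ge 1$ I would exploit the projection $\pi\colon A(C)\to A(\bar C)$ coming from the exact sequence \eqref{eq:ses}. By the universal property of the quasi-Albanese map (Theorem~\ref{thm: quasiA}(i)) together with the functoriality of the construction along the open inclusion $C\hookrightarrow\bar C$, the composite $\pi\circ a_C$ coincides with the restriction to $C$ of the Albanese (Abel--Jacobi) map $a_{\bar C}\colon\bar C\to A(\bar C)$. By the classical Abel theorem, $a_{\bar C}$ is an embedding as soon as $g\ge 1$ (an isomorphism onto an elliptic curve if $g=1$, a closed embedding if $g\ge 2$); in particular $\pi\circ a_C$ is injective and immersive, whence so is $a_C$, and we are done in this case.

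It remains to treat $g=0$, i.e.\ $\bar C=\mathbb P^1$ and $A(\bar C)=0$, so that $A(C)=\mathbb{G}_m^{\,r}$ with $r=\ol q(C)$. Here $\ol P_1(C)>0$ forces $d\ge 2$; choose an affine coordinate $z$ on $\mathbb P^1$ for which two of the boundary points $a_1,a_2\in D$ are finite. The form $\omega:=\frac{dz}{z-a_1}-\frac{dz}{z-a_2}$ is a global logarithmic $1$-form on $(\mathbb P^1,D)$ (its existence also follows from Proposition~\ref{prop: form}, since $[a_1]-[a_2]=0$ in $H^2(\mathbb P^1,\mathbb Z)$); integrating and exponentiating $\omega$, the corresponding component of $a_C$ is the map $z\mapsto \frac{z-a_1}{z-a_2}$. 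As this is a M\"obius transformation, it is injective with nowhere vanishing differential on $C$, so a fortiori $a_C$ is injective and immersive, hence an embedding.

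The main obstacle I anticipate is bookkeeping rather than any deep input: I must verify carefully that $\pi\circ a_C$ is exactly the restricted Abel--Jacobi map (this is where functoriality of the quasi-Albanese construction enters) and that ``injective $+$ immersive'' genuinely upgrades to ``embedding'' for a possibly non-proper curve. The only truly classical ingredient is Abel's theorem for projective curves, which settles $g\ge1$, while the rational case is completely explicit.
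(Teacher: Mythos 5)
Your route is essentially the paper's: you split according to the genus of the compactification $\overline C$, handle $g\ge 1$ by projecting $A(C)\to A(\overline C)$ and invoking Abel's theorem for the Abel--Jacobi map, and handle $g=0$ by producing a map to $\mathbb G_m$ through which $a_C$ factors --- indeed your M\"obius map $z\mapsto \frac{z-a_1}{z-a_2}$ is precisely the paper's isomorphism $\overline C\setminus\{p_0,p_1\}\simeq \mathbb G_m$ restricted to $C$, obtained there directly from the universal property rather than by exponentiating a logarithmic form. The one genuine flaw is your opening reduction: it is \emph{not} true that an injective morphism from a smooth non-proper curve with everywhere injective differential is an embedding. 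Counterexample: $f\colon \mathbb A^1\setminus\{1\}\to \mathbb A^2$, $t\mapsto (t^2-1,\,t^3-t)$, is injective (the node of the image cubic is hit only by $t=-1$ once $t=1$ is removed) and immersive (the derivative $(2t,\,3t^2-1)$ never vanishes), yet it is not a locally closed immersion: its image is the entire nodal cubic, which is singular at the node. The implication ``injective $+$ immersive $\Rightarrow$ embedding'' requires properness, which $C$ does not have, so the step ``whence so is $a_C$, and we are done'' does not follow from injectivity and immersivity alone.

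Fortunately the flaw is inessential, because in both cases you actually prove more: you exhibit a morphism $\phi$ out of $A(C)$ (the projection $\pi$ when $g\ge 1$; a $\mathbb G_m$-coordinate when $g=0$) such that $\phi\circ a_C$ is an honest \emph{embedding} (namely $a_{\overline C}|_C$, respectively the restricted M\"obius map, each the restriction of an embedding to an open subset). And if some composite $\phi\circ a_C$ is an embedding, then so is $a_C$: letting $W$ denote the locally closed image of $\phi\circ a_C$ and $h:=(\phi\circ a_C)^{-1}\circ \phi$ on $\phi^{-1}(W)$, one has $h\circ a_C=\mathrm{id}_C$, so $a_C(C)=\{y\in \phi^{-1}(W)\ :\ a_C(h(y))=y\}$ is closed in $\phi^{-1}(W)$, hence locally closed, and $h$ restricts to an inverse of $a_C\colon C\to a_C(C)$. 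This retraction argument is exactly what the paper's phrase ``factorizes through $a_C$'' is doing. So: replace your false general principle by this factorization lemma, keep everything else, and your proof is correct and coincides in substance with the paper's.
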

 \begin{proof} Denote by $\ol C$ the compactification of $C$. If $g(\ol C)>0$, then the Abel-Jacobi map $\ol C\to A(\ol C)$ factorizes through $a_C$, so the claim follows by Abel's Theorem. 
 If $g(\ol C)=0$, we have $C:=\ol C\setminus \{p_0,\dots p_k\}$, where $k:=\ol P_1(C)$.  By the universal property, the inclusion $C\to \ol C\setminus \{p_0,p_1\}\simeq \mathbb G_m$ factorizes through $a_C$, which therefore is an isomorphism.
  \end{proof}

\subsection{Logarithmic ramification formula }\label{LogRam}
Let $V$, $W$ be smooth varieties of dimension $n$ and let  $h\colon V\to W$ be a dominant morphism. Let $g\colon X\to Z$ be a morphism extending $h$, where $X$, $Z$ are smooth compactifications of $V$, respectively $W$, such that  $D:=X\setminus V$ and $\Delta:=Z\setminus W$ are snc divisors. Then the pull back of a  logarithmic $n$-form on $Z$ is a  logarithmic $n$-form on $X$, and a local computation shows that there is an effective divisor $\ol R_g$ of $X$ - the {\em logarithmic ramification divisor} - such that the following linear equivalence holds:
\begin{equation}\label{eq: logram}
K_X+D\sim g^*(K_Z+\Delta )+\ol R_g.
\end{equation} 
Equation \eqref{eq: logram} is called the {\em logarithmic ramification formula} (cf. \cite[\S 11.4]{Ii82}). 

 We note the following for later use:

\begin{lm}\label{lem: Rg} In the above set-up, denote by $R_g$ the (usual) ramification divisor of $g$. Let   $\Gamma$ be  an irreducible divisor   such  that $g(\Gamma)\not \subseteq \Delta$.
Then $\Gamma$ is a component of  $\ol R_g$ if and only if $\Gamma$ is a component of $D+R_g$.
\end{lm}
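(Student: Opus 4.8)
The plan is to analyze the logarithmic ramification formula \eqref{eq: logram} locally around a generic point of $\Gamma$ and compare the logarithmic ramification divisor $\ol R_g$ with the divisor $D + R_g$. Since $\Gamma$ is irreducible and $g(\Gamma)\not\subseteq \Delta$, the image of the generic point of $\Gamma$ lands in the open part $W$ of $Z$ where $\Delta$ is not supported. This means that near the generic point of $\Gamma$, the target-side boundary $\Delta$ is irrelevant, and the logarithmic forms on $Z$ we pull back are just ordinary regular $n$-forms. So the comparison reduces to a local computation at the generic point of $\Gamma$.

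First I would set up local coordinates. Let $\eta$ be the generic point of $\Gamma$ and work in the local ring $\OO_{X,\eta}$, which is a DVR with uniformizer $t$ cutting out $\Gamma$. On the target side, since $g(\Gamma)\not\subseteq\Delta$, I may choose local coordinates $(w_1,\dots,w_n)$ on $Z$ near $g(\eta)$ with none of the $w_i$ vanishing on $\Delta$, so that $K_Z+\Delta$ and $K_Z$ agree in a neighbourhood and $dw_1\wedge\cdots\wedge dw_n$ generates $\sO_Z(K_Z+\Delta)=\sO_Z(K_Z)$ there. On the source side there are two cases. If $\Gamma$ is a component of $D$, then $t = x_1$ is one of the boundary coordinates and $\Omega^n_X(\log D)$ is generated near $\eta$ by $\tfrac{dx_1}{x_1}\wedge dx_2\wedge\cdots\wedge dx_n$; if $\Gamma$ is not a component of $D$, then $\Omega^n_X(\log D)=\sO_X(K_X+D)$ is generated by $dx_1\wedge\cdots\wedge dx_n$ near $\eta$, exactly as the ordinary canonical sheaf.

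The heart of the argument is then to read off the order of vanishing of the pulled-back form $g^*(dw_1\wedge\cdots\wedge dw_n)$ along $\Gamma$ in each case, which by \eqref{eq: logram} equals $\operatorname{ord}_\Gamma(\ol R_g)$. This order is computed from the Jacobian of $g$. Comparing with the ordinary ramification formula $K_X \sim g^*K_Z + R_g$, which gives $\operatorname{ord}_\Gamma(R_g)$ as the order of vanishing of the Jacobian determinant, one finds: when $\Gamma\not\subseteq D$, the logarithmic generator on $X$ is the ordinary one, so $\operatorname{ord}_\Gamma(\ol R_g)=\operatorname{ord}_\Gamma(R_g)$, and since $\Gamma\not\subseteq D$ this equals $\operatorname{ord}_\Gamma(D+R_g)$; when $\Gamma\subseteq D$, replacing $dx_1$ by $\tfrac{dx_1}{x_1}$ shifts the order by exactly $+1$ coming from the coefficient of $\Gamma$ in $D$, so $\operatorname{ord}_\Gamma(\ol R_g)=\operatorname{ord}_\Gamma(R_g)+1=\operatorname{ord}_\Gamma(D+R_g)$. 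In both cases $\operatorname{ord}_\Gamma(\ol R_g)\ge 1$ if and only if $\operatorname{ord}_\Gamma(D+R_g)\ge 1$, which is precisely the claimed equivalence of being a component.

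The step I expect to be the main obstacle is the careful local bookkeeping of the Jacobian order when both $\Gamma\subseteq D$ and $\Gamma$ maps into $\Delta$-free territory, making sure the $+1$ from the logarithmic pole on the source is not spuriously cancelled or doubled, and confirming that no contribution from $\Delta$ sneaks in through a coordinate $w_i$ that happens to vanish along $g(\Gamma)$ — the hypothesis $g(\Gamma)\not\subseteq\Delta$ is exactly what rules this out, so I would state and use it explicitly. Everything else is a routine unwinding of \eqref{eq: logram} against the ordinary ramification formula.
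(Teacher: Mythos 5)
Your proof is correct and follows essentially the same route as the paper: both compare local generators of $\Omega^n_X(\log D)$ and $\Omega^n_Z(\log\Delta)$ at a general point of $\Gamma$, using that its image lies off $\Delta$ so the target generator is an ordinary nowhere-vanishing regular $n$-form, and that the source generator has a logarithmic pole along $\Gamma$ exactly when $\Gamma\subseteq D$. Your explicit Jacobian bookkeeping even yields the slightly stronger equality $\operatorname{ord}_\Gamma(\ol R_g)=\operatorname{ord}_\Gamma(D+R_g)$, which the paper's qualitative phrasing leaves implicit.
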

\begin{proof} Let $x\in \Gamma$ be a general point, so that $y=g(x)$ does not lie on $\Delta$. Then $K_Z+\Delta$ is generated locally near $y$ by a nowhere vanishing  regular $n$-form, while $K_X+D$ is generated locally near $x$  either by an $n$-form with a logarithmic pole on $\Gamma$ or by  a regular $n$-form, according to whether $\Gamma$ is a component of $D$ or not. In the former case $\Gamma$ is always a component of $\ol R_g$, in the latter case it is a component of $\ol R_g$ if and only if $g$ is ramified along $\Gamma$.
\end{proof}
\subsection{Generic vanishing}\label{ssec: GV}
 The theory of generic vanishing was introduced by Green--Lazarsfeld in \cite{GL1987}.  It has since  developed in a powerful tool to study the geometry of projective varieties via their Albanese morphism (see, for example, \cite{Pa12} for a nice survey). We are going to see in Section \ref{any dim} that these techniques can be useful instruments also in the quasi-projective setting.\par

Let $X$ be a smooth projective variety. Given  a coherent sheaf  $\mathcal{F}$ on $X$, the \emph{cohomological support loci} of $\mathcal{F}$ are the subsets
$$V^i(X,\mathcal{F}):=\{\alpha\in\Pico{X}\ |\ h^i(X,\mathcal{F}\otimes\alpha)\neq 0\}\subseteq \Pico{X}.$$

 We say that $\mathcal{F}$ is a \emph{$GV$-sheaf} if, for every $i>0$, we have that
 $$\mathrm{codim}_{\Pico{X}}V^i(X,\mathcal{F})\geq i.$$
 When all the $V^i(X,\mathcal{F})$ are empty for $i>0$, one says, following the original terminology of Mukai (\cite{Mu1981}), that $\mathcal{F}$ is a \emph{IT(0)-sheaf}.\par

We recall  the following well known useful observation:

\begin{lm} \label{lem: -Z}
Let $X$ be a smooth projective variety and $L$ a line bundle of $X$. 

If $V^0(X,L)\cap (-V^0(X,L))$ has  positive dimension, then $h^0(X, 2L)\ge 2$. 
\end{lm}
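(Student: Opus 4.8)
The plan is to exhibit two linearly independent global sections of $2L$. The hypothesis provides two classes $\alpha,\beta\in\Pico{X}$ lying in $V^0(X,L)$ with $-\alpha,-\beta\in V^0(X,L)$ as well; since $V^0(X,L)\cap(-V^0(X,L))$ has positive dimension, I may choose $\alpha\ne\beta$ in this intersection, and in fact choose them so that $\alpha-\beta$ is nontrivial. For each such class $\gamma$ in the intersection we have nonzero sections $s_\gamma\in H^0(X,L\otimes\gamma)$ and, because $-\gamma\in V^0(X,L)$, a nonzero section $t_\gamma\in H^0(X,L\otimes\gamma^{-1})$. The product $s_\gamma\, t_\gamma$ then lives in $H^0(X, 2L)$, because the twists $\gamma$ and $\gamma^{-1}$ cancel. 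This already produces one section of $2L$; the whole point is to produce a \emph{second} one that is not proportional to the first.

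The key step is to vary $\gamma$. First I would fix one class, say $\alpha$, giving a section $f:=s_\alpha t_\alpha\in H^0(X,2L)$, which is nonzero as a product of nonzero sections on an integral variety. Then I would pick a second class $\beta$ in the positive-dimensional intersection with $\beta\ne\alpha$, giving $g:=s_\beta t_\beta\in H^0(X,2L)$. The claim is that $f$ and $g$ are linearly independent. The natural way to see this is to compare the two factorizations: the divisor of $f$ is $\operatorname{div}(s_\alpha)+\operatorname{div}(t_\alpha)$ while the divisor of $g$ is $\operatorname{div}(s_\beta)+\operatorname{div}(t_\beta)$, and I would argue that these effective divisors cannot coincide when $\alpha\ne\beta$. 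Indeed, $s_\alpha$ is a section of the line bundle $L\otimes\alpha$ and $s_\beta$ of the distinct line bundle $L\otimes\beta$, so their zero divisors represent different points of $\Pic(X)$ after recording the associated topologically-trivial twists; if $f$ and $g$ were proportional, their zero divisors would be equal, forcing the factorizations to match up and yielding $\alpha=\pm\beta$ in $\Pico{X}$, which can be excluded by choosing $\alpha,\beta$ generically on the positive-dimensional locus (so that neither $\alpha=\beta$ nor $\alpha=-\beta$ holds).

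The main obstacle is precisely this independence argument: a product of sections can be proportional to another product even when the individual factors differ, so I cannot simply read off independence from $\alpha\ne\beta$. The clean way around this is to use that $\Pico{X}$ is positive-dimensional on the relevant locus, so I have a whole continuous family of sections $\{s_\gamma t_\gamma\}_\gamma$ of the fixed bundle $2L$; if all of them were proportional to a single $f$, then all the divisors $\operatorname{div}(s_\gamma)+\operatorname{div}(t_\gamma)$ would be the fixed divisor $\operatorname{div}(f)$, which would force the line bundles $L\otimes\gamma$ to lie in a bounded (zero-dimensional) family of effective divisor classes as $\gamma$ ranges over a positive-dimensional set, a contradiction since the map $\gamma\mapsto L\otimes\gamma$ is injective on $\Pico{X}$. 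I would phrase this as: the assignment $\gamma\mapsto [\,\operatorname{div}(s_\gamma)\,]\in\Pic(X)$ is non-constant (it records $L\otimes\gamma$), hence the sections $s_\gamma t_\gamma$ cannot all span the same line, giving $h^0(X,2L)\ge 2$. I expect the only delicate point to be ensuring the sections $s_\gamma, t_\gamma$ can be chosen to vary reasonably in $\gamma$, which is harmless here since I only need two well-chosen values of $\gamma$ together with the non-constancy of the divisor class.
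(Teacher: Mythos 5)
Your proof is correct and, after the repair in your final paragraph, essentially identical to the paper's: both consider the products $s_\gamma t_\gamma\in H^0(X,2L)$ as $\gamma$ ranges over the positive-dimensional locus $V^0(X,L)\cap(-V^0(X,L))$, and observe that if all were proportional to a single section $f$, then the fixed divisor $\operatorname{div}(f)$ would admit infinitely many decompositions into two effective divisors (since $\operatorname{div}(s_\gamma)$ determines $L\otimes\gamma$, hence $\gamma$), contradicting the fact that an effective divisor can be written as a sum of two effective divisors in only finitely many ways. You were right to discard your intermediate two-class attempt --- proportionality of $s_\alpha t_\alpha$ and $s_\beta t_\beta$ does not force $\alpha=\pm\beta$ --- and the family argument you substitute is precisely the one in the paper.
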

\begin{proof}
For $\alpha\in V^0(X,L)\cap (-V^0(X,L))$ consider the multiplication map
 $$H^0(X,L\otimes\alpha)\otimes H^0(X,L\otimes\alpha^{-1})\longrightarrow H^0(X,2L).$$
 As $\alpha$ varies, 
   the image of the map must vary,  since a  divisor  can be written as the sum of two effective divisors only in finitely many ways.
As a consequence $h^0(X,2L)\ge 2$.
\end{proof}

\subsection{Curves on smooth surfaces} 
Let  $D\succ 0$ be  a divisor (a ``curve'') on a smooth projective surface $X$.  The \emph{arithmetic genus} of $D$  is $p_a(D):=1-\chi(\OO_D)$ and can be computed by means of  the {\em adjunction formula} 
\begin{equation}\label{eq: adj} 
p_a(D)-1=\frac 12 D(K_X+D).
\end{equation}
By Serre duality, one also has $p_a(D)-1= h^0(D,\omega_D)-h^0(D, \OO_D)$, hence $h^0(D, \omega_D)\ge p_a(D)$. 

For  $m\in \mathbb N$, we say that $D$ is \emph{$m$-connected}  if for every decomposition $D=D_1+D_2$, with $D_1$ and $D_2$ effective, one has $D_1D_2\ge m$.\

We recall  some  well known facts. 
\begin{lm}\label{lem: pa0} Let  $D$ be  a $1$-connected divisor,  then:
\begin{enumerate}
 \item $h^0(D,\OO_D)=1$ (so  
in particular, $p_a(D)=h^0(D, \omega_D)\ge 0$);
\item if $L$ is a line bundle on $D$ that has degree 0 on every component of $D$, then $h^0(D,L)\le 1$, and $h^0(D,L)=1$ if and only if $L= \OO_D$.
\end{enumerate}
\end{lm}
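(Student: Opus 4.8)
The plan is to derive both parts from a single nonvanishing principle that converts $1$-connectedness into a numerical contradiction. The central device is the following. Given a line bundle $M$ on $D$ and a nonzero section $s\in H^0(D,M)$, let $B$ be a maximal subdivisor $0\preceq B\preceq D$ such that $s$ maps to $0$ under restriction $H^0(D,M)\to H^0(B,M|_B)$, and set $A:=D-B$. Since the ideal sheaf of $B$ in $D$ is $\OO_A(-B)$, twisting the structure sequence by $M$ gives
\[
0\to M\otimes\OO_A(-B)\to M\to M|_B\to 0,
\]
so $s$ lifts to a section $s'\in H^0(A,M\otimes\OO_A(-B))$. By maximality of $B$, this $s'$ restricts to a \emph{nonzero} section on every reduced component $C\preceq A$ (otherwise $s$ would vanish on the strictly larger subdivisor $B+C$).

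On each such integral component $C$, the restriction of $s'$ is a nonzero section of a line bundle of degree $\deg_C M-B\cdot C$, forcing $\deg_C M-B\cdot C\ge 0$. Multiplying by the multiplicity of $C$ in $A$ and summing over components yields $\deg_A M\ge B\cdot A$. If $B>0$, then $A+B=D$ is a genuine decomposition into effective divisors, so $1$-connectedness gives $B\cdot A\ge 1$. In both cases of interest $\deg_C M=0$ on every component, hence $\deg_A M=0$, and we reach the contradiction $0\ge B\cdot A\ge 1$. Therefore $B=0$: a nonzero section of $\OO_D$ (case $M=\OO_D$), or of $L$ (case $M=L$), can vanish on no subdivisor of $D$.

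To conclude (i), note that a nonzero $s\in H^0(\OO_D)$ restricts to a nonzero constant $c$ on a fixed reduced component $C_1$, since $H^0(\OO_{C_1})=\mathbb C$; then $s-c\in H^0(\OO_D)$ vanishes on $C_1$, so by the principle above $s-c=0$, giving $h^0(\OO_D)=1$ and $p_a(D)=h^0(D,\omega_D)\ge 0$ as recorded before the statement. For (ii), if $h^0(D,L)>0$ pick a nonzero $s$; it vanishes on no component, and because $\deg_C L=0$ its restriction to each reduced component is nowhere zero. Thus $s$ is nowhere zero on $\operatorname{Supp}D$, so by Nakayama it generates $L$ at every point and induces an isomorphism $\OO_D\xrightarrow{\ \sim\ }L$. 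Conversely $L\cong\OO_D$ gives $h^0(D,L)=h^0(\OO_D)=1$ by (i); hence $h^0(D,L)\le 1$ always, with equality exactly when $L\cong\OO_D$.

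The main obstacle I anticipate is not conceptual but combinatorial: when $D$ is nonreduced one must set up the ``maximal vanishing subdivisor'' $B$ carefully, verifying both its existence and the key implication that maximality forces $s'$ to be nonzero on \emph{every reduced component} of $A$ (not merely on $A$ as a scheme), together with the precise identification of the conormal sheaf as $\OO_A(-B)$ so that the degree of $M\otimes\OO_A(-B)$ along a component $C$ is exactly $\deg_C M-B\cdot C$. Once this bookkeeping is in place, the intersection-theoretic step is immediate and both parts follow uniformly.
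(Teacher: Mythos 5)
Your proposal is correct, and it is essentially the paper's own proof: the paper simply cites \cite[Lem.~II 12.2]{BHPV}, and your maximal-vanishing-subdivisor argument --- the decomposition $D=A+B$ with $B$ maximal such that $s|_B=0$, the lift $s'\in H^0(A,M\otimes\OO_A(-B))$ not vanishing on any reduced component of $A$, and the resulting contradiction $0\ge \deg_A M\ge A\cdot B\ge 1$ with $1$-connectedness --- is precisely the classical proof behind that reference, including the reduction of (i) to subtracting the constant $s|_{C_1}$ and of (ii) to a nowhere-vanishing section of a degree-zero bundle trivializing $L$.
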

\begin{proof} See \cite[Lem.~II 12.2]{BHPV}.
\end{proof}

\begin{lm} \label{pa sum} Let $D_1$, $D_2$ be two effective non zero divisors on a smooth projective  surface $X$. Then 
\begin{enumerate}
\item $p_a(D_1+D_2)=p_a(D_1)+p_a(D_2)+D_1D_2-1$;
\item if $D_1\prec D_2$, then $h^0(D_1,\omega_{D_1})\le h^0(D_2, \omega_{D_2})$.
\end{enumerate}
\end{lm}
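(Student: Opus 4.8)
The plan is to treat the two parts separately, deriving (i) by a direct adjunction computation and (ii) by a short exact sequence of dualizing sheaves. For part (i), I would simply expand the adjunction formula \eqref{eq: adj} applied to $D_1+D_2$, writing
\[
p_a(D_1+D_2)-1=\tfrac12(D_1+D_2)(K_X+D_1+D_2).
\]
Distributing the intersection product, the right-hand side splits as $\tfrac12 D_1(K_X+D_1)+\tfrac12 D_2(K_X+D_2)+D_1D_2$. Recognizing the first two summands as $p_a(D_1)-1$ and $p_a(D_2)-1$ via \eqref{eq: adj} immediately yields $p_a(D_1+D_2)=p_a(D_1)+p_a(D_2)+D_1D_2-1$. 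This step is routine and presents no difficulty.

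For part (ii) the key is to exhibit an injection of sheaves $\omega_{D_1}\hookrightarrow\omega_{D_2}$, so that taking global sections gives the inequality by left-exactness. I would set $D':=D_2-D_1\succ 0$ and start from the structure-sheaf sequence on $X$,
\[
0\to\OO_{D_1}(-D')\to\OO_{D_2}\to\OO_{D'}\to 0,
\]
whose kernel term is $\OO_X(-D')/\OO_X(-D_2)\simeq\OO_{D_1}(-D')$. Tensoring by the line bundle $\OO_X(K_X+D_2)$ (flat, so exactness is preserved) and using the adjunction identification $\omega_D\simeq\OO_D(K_X+D)$, the three terms become $\OO_{D_1}(K_X+D_2-D')=\OO_{D_1}(K_X+D_1)=\omega_{D_1}$, then $\omega_{D_2}$, and finally $\OO_{D'}(K_X+D_2)=\omega_{D'}(D_1)$. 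This produces the exact sequence
\[
0\to\omega_{D_1}\to\omega_{D_2}\to\omega_{D'}(D_1)\to 0,
\]
and the injection on the left gives $h^0(D_1,\omega_{D_1})\le h^0(D_2,\omega_{D_2})$ as desired.

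The only point requiring genuine care — and thus the main, albeit minor, obstacle — is the correct bookkeeping in part (ii): verifying that the kernel of the restriction $\OO_{D_2}\to\OO_{D'}$ is indeed $\OO_{D_1}(-D')$, and then tracking the adjunction twists so that the left-hand term collapses to $\omega_{D_1}$ rather than to $\omega_{D_1}$ twisted by some residual divisor. Once the identification $D_2-D'=D_1$ is used in the exponent $K_X+D_2-D'$, the argument closes in one shot with no induction and no deeper input.
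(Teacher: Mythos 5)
Your proof is correct and follows essentially the same route as the paper: part (i) by expanding the adjunction formula \eqref{eq: adj} for $D_1+D_2$, and part (ii) by twisting the decomposition sequence $0\to\OO_{D_1}(-A)\to\OO_{D_2}\to\OO_A\to 0$ (the paper writes $A$ for your $D'$) by $K_X+D_2$ and taking global sections. Your identification of the kernel term and the collapse $\OO_{D_1}(K_X+D_2-D')\simeq\omega_{D_1}$ match the paper's argument exactly.
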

\begin{proof}
(i) Use the adjunction formula \eqref{eq: adj}. 

(ii) Write $D_2=D_1+A$ and consider the decomposition sequence:
$$0\to \OO_{D_1}(-A)\to \OO_{D_2}\to \OO_A\to 0.$$
Twisting by $K_X+D_2$ and taking cohomology gives the result.
\end{proof}

\begin{lm} \label{lem: omegaD} Let $D$ be an effective non zero divisor on a smooth projective surface $X$. Then we have the following formulae:

\begin{enumerate}
\item $h^0(X, K_X+D)= p_a(D)+ p_g(X)-q(X)+h^1(X,-D)$;
\item $h^0(X, K_X+D)= h^0(D,\omega_D)+ p_g(X)-q(X)+d$,  \par where  $d$ is the dimension of the kernel  of the restriction map  $H^1(X, \OO_X)\to H^1(D, \OO_D)$.
\end{enumerate}
\end{lm}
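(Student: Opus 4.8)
The plan is to read off both formulae from the structure sequence
$$0\to \OO_X(-D)\to \OO_X\to \OO_D\to 0$$
combined with Serre duality on $X$ and on the curve $D$. The two facts I will lean on throughout are that, by adjunction, $\omega_D\simeq \OO_D(K_X+D)$ is the dualizing sheaf of $D$ (so Serre duality on the Gorenstein curve $D$ gives $h^0(D,\omega_D)=h^1(D,\OO_D)$), and that Serre duality on $X$ identifies $h^0(X,K_X+D)=h^2(X,\OO_X(-D))$ and $h^2(X,\OO_X)=p_g(X)$.

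For (i) I would argue via Euler characteristics. Additivity in the structure sequence gives $\chi(\OO_X(-D))=\chi(\OO_X)-\chi(\OO_D)$; writing $\chi(\OO_X)=1-q(X)+p_g(X)$ and using the definition $p_a(D)=1-\chi(\OO_D)$ yields $\chi(\OO_X(-D))=p_g(X)-q(X)+p_a(D)$. On the cohomology side, $h^0(X,\OO_X(-D))=0$ because $-D$ is nonzero and anti-effective, while $h^2(X,\OO_X(-D))=h^0(X,K_X+D)$ by Serre duality. Substituting into $\chi(\OO_X(-D))=h^0-h^1+h^2$ and solving for $h^0(X,K_X+D)$ produces formula (i); the only inputs are additivity of $\chi$, the vanishing $h^0(X,\OO_X(-D))=0$, and Serre duality.

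For (ii) I would again use $h^0(X,K_X+D)=h^2(X,\OO_X(-D))$ and feed the structure sequence into the long exact cohomology sequence. Since $\dim D=1$ we have $H^2(D,\OO_D)=0$, so the relevant tail is
$$H^1(X,\OO_X)\xrightarrow{\,r\,} H^1(D,\OO_D)\to H^2(X,\OO_X(-D))\to H^2(X,\OO_X)\to 0,$$
where $r$ is precisely the restriction map whose kernel has dimension $d$; hence the rank of $r$ equals $q(X)-d$. Using $h^1(D,\OO_D)=h^0(D,\omega_D)$ and $h^2(X,\OO_X)=p_g(X)$, a dimension count along this exact sequence gives $h^0(X,K_X+D)=p_g(X)+h^0(D,\omega_D)-q(X)+d$, which is (ii). Equivalently, (ii) can be deduced directly from (i) by computing $h^1(X,-D)=h^0(D,\OO_D)-1+d$ from the same long exact sequence (now reading its initial terms, and using $h^0(X,\OO_X(-D))=0$) and combining with the already-noted identity $p_a(D)=h^0(D,\omega_D)-h^0(D,\OO_D)+1$.

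I do not expect a genuine obstacle: the whole argument is bookkeeping with one exact sequence and Serre duality. The only points deserving care are the duality identifications on the possibly reducible and non-reduced (but Gorenstein) curve $D$, where $\omega_D=\OO_D(K_X+D)$, the two vanishings $h^0(X,\OO_X(-D))=0$ and $H^2(D,\OO_D)=0$, and checking that the connecting/restriction map appearing in the long exact sequence is exactly the map $H^1(X,\OO_X)\to H^1(D,\OO_D)$ used to define $d$.
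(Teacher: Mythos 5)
Your proof is correct and takes essentially the same route as the paper: your Euler-characteristic bookkeeping in the structure sequence is equivalent to the paper's Riemann--Roch plus adjunction (given $p_a(D)=1-\chi(\OO_D)$) combined with Serre duality for (i), and your second derivation of (ii) --- extracting $h^1(X,-D)=h^0(D,\OO_D)-1+d$ from the restriction sequence and substituting into (i) together with $p_a(D)-1=h^0(D,\omega_D)-h^0(D,\OO_D)$ --- is exactly the paper's argument. The duality identifications on the possibly non-reduced Gorenstein curve $D$ that you flag are likewise the ones the paper relies on, so there is no gap.
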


\begin{proof}
(i) follows from  the Riemann-Roch theorem, the adjunction formula \eqref{eq: adj} and Serre duality.

 Taking cohomology in the restriction sequence
$$0\to\OO_X(-D)\to \OO_X\to \OO_D\to 0,$$
one obtains $h^1(X,-D)= h^0(D, \OO_D)-1+d$. Plugging this relation and the equality $p_a(D)-1= h^0(D,\omega_D)-h^0(D, \OO_D)$ in (i), one obtains (ii). 
\end{proof}

\begin{rmk} \label{codimension}Note that  by Serre duality  $d$ is exactly the codimension of  the image of  the map $t\colon H^0(D, \omega_D)\to H^1(X, K_X)$.
\end{rmk}

\section{Results in arbitrary dimension via generic vanishing} \label{any dim}

Let $V$ be a smooth quasi-projective variety and let $X$ be a smooth compactification of $V$ with snc boundary.
In this section we investigate the geometric constraints imposed on the Albanese map $a_X$ of $X$, and on the image of $V$ via $a_X$,  by the fact  the logarithmic plurigenera of $V$ are small. Our main tool will be the theory of generic vanishing (\S \ref{ssec: GV}), and its recent extensions to the ``log-canonical''  setting by Popa--Schnell (\cite{PS14})and Shibata (\cite{Sh16}). 

 \begin{prop}\label{prop:axsu}
Let $V$ be a smooth quasi-projective  variety and  denote by $X$ a compactification of $V$ with simple normal crossing boundary divisor $D$.

  If \ $\ol P_1(V)=\overline{P}_{2}(V)=1$, then the Albanese morphism  $a_X\colon X\to A(X)$ is surjective.
\end{prop}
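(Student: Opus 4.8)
The plan is to study the log-canonical bundle $L:=\mathcal O_X(K_X+D)$ through the cohomological support loci of its pushforward to the Albanese, using the logarithmic extension of generic vanishing. If $q(X)=0$ then $A(X)$ is a point and there is nothing to prove, so I assume $q(X)>0$ and set $A:=A(X)$ and $\mathcal F:=a_{X*}L$, a coherent sheaf on $A$ whose support is exactly $a_X(X)$ (so that $a_X$ is surjective if and only if $\operatorname{Supp}\mathcal F=A$). The first ingredient is that, by the generic vanishing theorem for log-canonical pairs of Popa--Schnell \cite{PS14} and Shibata \cite{Sh16}, $\mathcal F$ is a $GV$-sheaf on $A$; in particular its cohomological support locus $V^0(X,L)=V^0(A,\mathcal F)$ is a proper closed subset whose irreducible components are, by the structure theorem for such loci, torsion translates of abelian subvarieties of $\Pico{X}=\widehat{A}$.

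Next I would feed in the two numerical hypotheses. Since $\overline P_1(V)=h^0(X,L)=1>0$, the origin lies in $V^0(X,L)$; let $T$ be the irreducible component of $V^0(X,L)$ through $0$. As $0$ is a torsion point, the structure theorem forces $T$ to be an abelian subvariety of $\widehat{A}$, so that $T=-T$ and hence $T\subseteq V^0(X,L)\cap\bigl(-V^0(X,L)\bigr)$. On the other hand $\overline P_2(V)=h^0(X,2L)=1<2$, so Lemma \ref{lem: -Z} shows that $V^0(X,L)\cap(-V^0(X,L))$ has dimension $0$. Combining the two facts gives $\dim T=0$; that is, the origin is an \emph{isolated} point of $V^0(X,L)$.

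The final, and decisive, step is to convert ``$0$ isolated in $V^0$'' into surjectivity of $a_X$. Here I would appeal to the Fourier--Mukai description of $GV$-sheaves (Pareschi--Popa): for the $GV$-sheaf $\mathcal F$, its Fourier--Mukai transform is represented, up to the standard shift, by a single sheaf $\widehat{\mathcal F}$ on $\widehat{A}$ whose support is $V^0(A,\mathcal F)$ up to the $(-1)$-involution. Since $0$ is isolated in $V^0$, in a neighbourhood of the origin $\widehat{\mathcal F}$ is a nonzero skyscraper, and under the inverse equivalence a skyscraper supported at the origin corresponds to a nonzero trivial-bundle summand of $\mathcal F$. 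Such a summand has full support, whence $\operatorname{Supp}(\mathcal F)=a_X(X)=A$ and $a_X$ is surjective.

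I expect the heart of the matter to be this last passage. The logarithmic generic vanishing of \cite{PS14} and \cite{Sh16} is precisely what licenses treating $a_{X*}\mathcal O_X(K_X+D)$ by the same $GV$/Fourier--Mukai formalism available for $a_{X*}\omega_X$ in the projective case (as exploited by Chen--Hacon \cite{CH01}), and one must check that both the structure theorem for $V^0$ and the skyscraper$\leftrightarrow$trivial-summand correspondence hold verbatim in the pairs setting. By contrast, the role of the hypotheses is transparent: $\overline P_1=1$ places the origin in $V^0$, while $\overline P_2=1$, through Lemma \ref{lem: -Z}, is exactly what pins it down as an isolated point; the rest of the argument is formal.
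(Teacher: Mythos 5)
Your proposal is correct, and its first half coincides exactly with the paper's proof: both use Shibata's structure theorem for $V^0(X,\sO_X(K_X+D))$ together with Lemma \ref{lem: -Z} to show that $\sO_X$ is an isolated point of $V^0$, and both invoke \cite{PS14}*{Variant 5.5} (via the projection formula) to treat $a_{X,*}\sO_X(K_X+D)$ as a $GV$-sheaf on $A(X)$. (Incidentally, the component of $V^0$ through the origin is automatically a subtorus whether or not the translate is torsion --- a translate $\tau+T'$ containing $0$ satisfies $\tau\in T'$, hence equals $T'$ --- so the torsion refinement you lean on is not needed.) Where you genuinely diverge is the endgame. The paper converts ``$\sO_{A(X)}$ isolated in $V^0$'' into surjectivity by citing \cite{Pa12}*{Lemma 1.8}: for a $GV$-sheaf an isolated point of $V^0$ lies in $V^{q(X)}$, so $h^{q(X)}(A(X),a_{X,*}\sO_X(K_X+D))\neq 0$, and Grothendieck vanishing (a sheaf has no cohomology above the dimension of its support) then forces the Albanese image to have dimension $q(X)$. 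You instead unwind the Fourier--Mukai mechanism underlying that very lemma: the transform splits off a skyscraper at the isolated point, which corresponds back to a direct summand of $a_{X,*}\sO_X(K_X+D)$ with full support. Both routes are valid; the paper's is a one-line citation, while yours makes the mechanism explicit and reaches full support directly, bypassing top cohomology. Two harmless imprecisions in your version: the single-sheaf (WIT) property of the transform holds for $R\Delta\mathcal F$ rather than for $\mathcal F$ itself --- which is precisely why the support of the transform is $V^0$ up to the $(-1)$-involution --- and an Artinian skyscraper at the origin transforms into a \emph{unipotent} bundle (an iterated extension of copies of $\sO_{A(X)}$), trivial only when the skyscraper is reduced; since a unipotent summand has full support in any case, your conclusion is unaffected. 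Finally, your claim that $\operatorname{Supp}(a_{X,*}\sO_X(K_X+D))$ equals $a_X(X)$ is more than you need and not obvious in general; the inclusion $\operatorname{Supp}(a_{X,*}\sO_X(K_X+D))\subseteq a_X(X)$ is all your last step actually uses.
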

\begin{proof}   Since $V^0(X,\mathcal{O}_X(K_X+D))$ is a union of translates of subtori of $A(X)$ (cf. \cite{Sh16}*{Theorem 1.3}), Lemma \ref{lem: -Z} implies that $\sO_X$ is an isolated point of $V^0(X,\sO_X(K_X+D))$. Thus by the projection formula $\sO_{A(X)}$ is an isolated point of $V^0(A(X),a_{X,*}\sO_X(K_X+D))$. Thanks to \cite{PS14}*{Variant 5.5}, we know that $a_{X,*}\sO_X(K_X+D)$ is a $GV$-sheaf on $A(X)$, and therefore  by \cite{Pa12}*{Lemma 1.8} we know that $\sO_{A(X)}$ is a component of $V^{q(X)}(A(X),a_{X,*}\sO_X(K_X+D))$. In particular $h^{q(X)}(A(X),a_{X,*}\sO_X(K_X+D))\neq 0$ and we deduce that the dimension of the Albanese image of $X$ is equal the dimension of $A(X)$. We conclude because $X$ and $A(X)$ are projective.
\end{proof}

As an immediate consequence  we have:
\begin{cor}\label{thm:qx=2}
 Let $V$ an $n$-dimensional smooth quasi-projective  variety,   $X$ a compactification of $V$  such that $D:=X\setminus V$ is a snc divisor. If $\overline{P}_1(V)=\overline{P}_2(V)=1$ and $\overline{q}(V)=q(X)=n$, then the quasi-Albanese morphism $a_V\colon V\rightarrow A(V)$ is  birational. 
 \end{cor}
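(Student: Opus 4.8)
The plan is to reduce the statement to the projective characterization of abelian varieties of Chen--Hacon (\cite{CH01}), using Proposition \ref{prop:axsu} to supply the surjectivity that makes a ramification argument available. The hypotheses are on the \emph{logarithmic} plurigenera, so the crux of the bookkeeping is to transfer them to the ordinary plurigenera $P_1(X),P_2(X)$ of the compactification.

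First I would dispose of the quasi-abelian bookkeeping. Since $q(V)=q(X)=n$ by definition and $\overline q(V)=n$ by hypothesis, the torus rank $r=\overline q(V)-q(V)$ in the exact sequence \eqref{eq:ses} vanishes; hence $A(V)=A(X)$ is an abelian variety of dimension $n$. Moreover the projection $A(V)\to A(X)$ onto the compact part in \eqref{eq:ses} recovers the Albanese map, in the sense that $a_X|_V$ factors through $a_V$ via this projection (Theorem \ref{thm: quasiA}(i)). As $r=0$, that projection is an isomorphism, so $a_V$ and $a_X|_V$ agree up to isomorphism of the targets; since $V$ is dense in $X$, it therefore suffices to prove that $a_X\colon X\to A(X)$ is birational.

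Next I would run the ramification argument. By Proposition \ref{prop:axsu} the map $a_X$ is surjective, and since $\dim X=\dim A(X)=n$ it is generically finite. The ramification formula then gives $K_X\sim a_X^*K_{A(X)}+R$ with $R\succeq 0$, and because $K_{A(X)}\sim 0$ we obtain $K_X\sim R\succeq 0$. Hence $mK_X$ is effective and $P_m(X)=h^0(X,mK_X)\ge 1$ for every $m>0$. On the other hand $D\succeq 0$ yields $mK_X\preceq m(K_X+D)$, so $P_m(X)\le \overline P_m(V)$. Combining these inequalities with $\overline P_1(V)=\overline P_2(V)=1$ forces $P_1(X)=P_2(X)=1$. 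Since also $\dim X=q(X)=n$, the theorem of Chen--Hacon applies and shows that $a_X$ is birational; by the first paragraph, so is $a_V$.

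I do not expect any serious obstacle here, which is why the statement is flagged as an immediate consequence: the substantive input is imported wholesale from \cite{CH01}. The only two points that require genuine (if routine) care are the identification $A(V)\cong A(X)$ together with the compatibility $a_X|_V=\psi\circ a_V$ for the isomorphism $\psi$ of \eqref{eq:ses} — needed so that birationality descends from $a_X$ to $a_V$ rather than merely up to an isogeny — and the observation that surjectivity of $a_X$ (via Proposition \ref{prop:axsu}) is exactly what guarantees $P_m(X)\ge 1$, so that the one-sided bounds $P_m(X)\le\overline P_m(V)$ pin the ordinary plurigenera to $1$.
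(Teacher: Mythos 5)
Your proposal is correct and follows essentially the same route as the paper: Proposition \ref{prop:axsu} gives surjectivity and hence generic finiteness of $a_X$, the sandwich $1\le P_m(X)\le \overline P_m(V)=1$ pins down $P_1(X)=P_2(X)=1$ so that Chen--Hacon applies, and the identification $A(V)\cong A(X)$ from Theorem \ref{thm: quasiA} (the paper's commutative diagram) transfers birationality to $a_V$. The only cosmetic difference is that you make explicit, via the ramification formula $K_X\sim a_X^*K_{A(X)}+R\sim R\succeq 0$, the step the paper compresses into ``maximal Albanese dimension implies $0<h^0(X,K_X)$''.
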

\begin{proof}

By Proposition \ref{prop:axsu}, we know that $a_X$ is generically finite, and so $X$ is of maximal Albanese dimension. In particular, we have that $$0<h^0(X,K_X)\leq h^0(X,K_X+D)=1.$$
We conclude that $h^0(X,K_X)=1$. Similarly, we see that $h^0(X,2K_X)=1$ and so by the characterization Theorem of Chen--Hacon (\cite{CH01}) we conclude that $a_X$ is a birational morphism. In addition, by Theorem \ref{thm: quasiA}  there is a commutative diagram
\[
 \xymatrix{V\ar@{^(->}[d]\ar[rr]^{a_V}&&A(V)\ar[d]^\simeq\\
 X\ar[rr]^{a_X}&&A(X)}
\]
and so also $a_V$ is birational.\par

\end{proof}


The next result refines Proposition \ref{prop:axsu}:
\begin{prop}\label{prop:not effective}
Let $V$ be a smooth quasi-projective  variety,   $X$ a compactification of $V$  such that $D:=X\setminus V$ is a snc divisor. Let $a_X\colon X\to A(X)$ be the Albanese morphism  and 
let $E>0$ be a divisor on $A(X)$. 

If  $\ol P_1(V)>0$ and $D$ contains the support of $a_X^*E$, then $\ol P_2(V)\ge 2$.
\end{prop}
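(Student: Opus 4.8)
The plan is to apply Lemma \ref{lem: -Z} to the line bundle $L=\sO_X(K_X+D)$: it suffices to produce a positive-dimensional subset of $V^0(X,K_X+D)\cap\bigl(-V^0(X,K_X+D)\bigr)$, for then $\ol P_2(V)=h^0\bigl(X,2(K_X+D)\bigr)\ge 2$. Write $A:=A(X)$ and let $a_X\colon X\to A$ be the Albanese morphism. Since $\ol P_1(V)>0$ we already know $\sO_X\in V^0(X,K_X+D)$, so the whole point is to turn the hypothesis ``$D$ absorbs the pullback of an effective divisor from $A$'' into a positive-dimensional \emph{symmetric} family of points of this locus.

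First I would pass from $E$ to its reduction $E':=(\operatorname{Supp}E)_{\mathrm{red}}$, a nonzero reduced effective divisor on $A$ with $\operatorname{Supp}(a_X^*E')=\operatorname{Supp}(a_X^*E)\subseteq D$. As $E'>0$ on an abelian variety, its class is not numerically trivial, so the homomorphism $\phi_{E'}\colon A\to\Pico{A}$, $a\mapsto t_a^*\sO_A(E')\otimes\sO_A(E')^{-1}$, is nonzero and $T:=\operatorname{im}\phi_{E'}\subseteq\Pico{A}$ is a positive-dimensional subtorus, hence symmetric, $T=-T$. For $\alpha=\phi_{E'}(a)\in T$ both $\sO_A(E')\otimes\alpha\cong t_a^*\sO_A(E')$ and $\sO_A(E')\otimes\alpha^{-1}\cong t_{-a}^*\sO_A(E')$ are effective. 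Using the isomorphism $a_X^*\colon\Pico{A}\xrightarrow{\sim}\Pico{X}$, the goal becomes $a_X^*T\subseteq V^0(X,K_X+D)$; symmetry of $T$ then yields $a_X^*T\subseteq V^0\cap(-V^0)$ and concludes.

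The key step is the claim that $h^0\bigl(X,K_X+D-a_X^*E'\bigr)>0$, equivalently that $\sO_A(E')$ embeds as a subsheaf of $\sF:=a_{X,*}\sO_X(K_X+D)$ (the adjoint of a nonzero map $a_X^*\sO_A(E')\to\sO_X(K_X+D)$). Granting it, fix $0\ne\tau\in H^0(X,K_X+D-a_X^*E')$ and let $s_{E'}$ be the tautological section of $\sO_A(E')$; for $\alpha=\phi_{E'}(a)\in T$ the product $\tau\cdot a_X^*(t_a^*s_{E'})$ is a section of $\sO_X(K_X+D)\otimes a_X^*\alpha$ with effective divisor $\operatorname{div}(\tau)+a_X^*(t_a^*E')$, so $a_X^*\alpha\in V^0(X,K_X+D)$ for every $\alpha\in T$. (Equivalently: the inclusion $\sO_A(E')\hookrightarrow\sF$ gives $V^0(A,\sO_A(E'))\subseteq V^0(A,\sF)=V^0(X,K_X+D)$, and $T\subseteq V^0(A,\sO_A(E'))$ because twisting by $\alpha\in T$ carries $\sO_A(E')$ to a translate of itself.)

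The main obstacle is exactly this inequality, and it is where ``$D$ reduced with $D\supseteq\operatorname{Supp}(a_X^*E)$'' is used. When $a_X$ is generically finite onto $A$ I would deduce it from the logarithmic ramification formula \eqref{eq: logram}, which here reads $K_X+D\sim a_X^*K_A+\ol R_{a_X}=\ol R_{a_X}$ since $K_A\sim 0$ and $A$ carries no boundary. A local computation as in the proof of Lemma \ref{lem: Rg} (with $\Delta=\emptyset$) shows that along any component $\Gamma$ of $a_X^*E'$ one has $\Gamma\subseteq D$ and $\operatorname{mult}_\Gamma\ol R_{a_X}=(e_\Gamma-1)+1=e_\Gamma=\operatorname{mult}_\Gamma(a_X^*E')$, where $e_\Gamma$ is the ramification index of $a_X$ along $\Gamma$, while on every other prime divisor $a_X^*E'$ has coefficient $0\le\operatorname{mult}\ol R_{a_X}$; hence $K_X+D-a_X^*E'\sim\ol R_{a_X}-a_X^*E'\ge 0$. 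It is essential here to use the reduced $E'$ and not $E$: for a component of $E$ of multiplicity $\ge 2$ the coefficient comparison fails, which is consistent with the fact that the conclusion can only be reached through the subtorus $\operatorname{im}\phi_{E'}$. The remaining case, when $a_X$ is not generically finite (for instance a surface fibred over an elliptic curve, with $a_X^*E'$ a union of fibres contained in $D$), has no ramification divisor to exploit; there I would instead work directly with the sheaf $\sF$ on $A$, using that it is a $GV$-sheaf by \cite{PS14}*{Variant 5.5}, that $V^0(A,\sF)$ is a union of torsion translates of subtori by \cite{Sh16}*{Theorem 1.3}, and the section supplied by $\ol P_1(V)>0$, to locate $T$ inside $V^0(A,\sF)$ and again invoke Lemma \ref{lem: -Z}.
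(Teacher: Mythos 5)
Your reduction to Lemma \ref{lem: -Z} via a positive-dimensional \emph{symmetric} subtorus inside $V^0(X,K_X+D)$ is exactly the right strategy (and the paper's), but the step carrying all the weight --- the key claim $h^0(X,K_X+D-a_X^*E')>0$ --- has a genuine gap and is in fact false in general. Your ramification computation only treats components $\Gamma$ of $a_X^*E'$ that \emph{dominate} a component of $E'$; when $E'$ is singular, $a_X^*E'$ may contain divisors contracted by $a_X$, and for those the multiplicity comparison fails: effectivity of $\ol R_{a_X}-a_X^*E'$ along such $\Gamma$ is precisely the condition that the pair $(A,E')$ be log canonical with respect to the divisors extracted by $X$, and reduced divisors on abelian varieties need not be log canonical. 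Concretely, let $A$ be an abelian surface, $E=E'$ an ample irreducible curve with an ordinary triple point $p$, $X=\mathrm{Bl}_pA$ with exceptional curve $\Gamma$, and $D$ the reduced total transform of $E'$ (which is snc). Then $a_X$ is birational, $\operatorname{mult}_\Gamma a_X^*E'=3$ while $\operatorname{mult}_\Gamma\ol R_{a_X}=2$, and $K_X+D-a_X^*E'\sim-\Gamma$, so $h^0(X,K_X+D-a_X^*E')=0$ even though the hypotheses of the proposition hold (and its conclusion remains true, as one checks directly with sections of $2E'$ singular at $p$). So the section $\tau$ you multiply against the translated sections $t_a^*s_{E'}$ need not exist, and the symmetric family collapses. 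Your second branch ($a_X$ not generically finite) is also only a pointer: generic vanishing plus ``$V^0$ is a union of torsion translates of subtori'' cannot by itself force a positive-dimensional component of $V^0(A,\sF)$ through $\sO_A$ --- indeed the paper's Proposition \ref{prop:axsu} exploits exactly the opposite fact, that $\sO_X$ is an \emph{isolated} point of $V^0$ --- so some positivity input beyond $\ol P_1(V)>0$ is needed there and is not supplied.

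The paper repairs both defects at once, uniformly in the Albanese dimension and with no case split: it writes $E=\pi^*H$ with $\pi\colon A\to B$ a surjection onto a positive-dimensional abelian variety and $H$ ample on $B$, sets $f=\pi\circ a_X$, and, instead of subtracting the full integral pullback, subtracts only a small fraction of it: choosing $N$ with $ND\succeq f^*H$, the $\mathbb Q$-divisor $\Delta=D-\frac1N f^*H$ has snc support and coefficients in $[0,1]$ --- this is exactly where reducedness of $D$ and the support hypothesis enter, and it sidesteps the log-canonicity obstruction above. Then $K_X+D+f^*\alpha$ is $\mathbb Q$-linearly equivalent to $K_X+\Delta+f^*(\frac1NH+\alpha)$ with $\frac1NH+\alpha$ ample, so the Fujino/Ambro vanishing theorem makes $f_*\sO_X(K_X+D)$ an IT(0)-sheaf on $B$; by constancy of the Euler characteristic, $h^0(B,f_*\sO_X(K_X+D)\otimes\alpha)=\ol P_1(V)>0$ for \emph{every} $\alpha\in\Pico{B}$, which places the positive-dimensional symmetric subtorus $f^*\Pico{B}$ inside $V^0(X,K_X+D)$, and Lemma \ref{lem: -Z} concludes. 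In short, your instinct --- translate a fixed effective datum to sweep out a subtorus of $V^0$ --- matches the paper's, but the integral divisor $a_X^*E'$ is the wrong object to subtract; the fractional twist combined with a vanishing theorem is what actually makes the required family of sections exist.
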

\begin{proof} 
The divisor $E$ is of the form $\pi^*H$, where  $\pi\colon A\to B$ is a morphism onto a positive dimensional abelian variety and $H$ is an ample divisor on $B$. Set $f:=\pi\circ a_X$;
by assumption there is a positive integer $N$ such that $ND\succeq f^*H$. Set $\Delta:=D-\frac{1}{N}f^*H$:  by assumption the $\mathbb Q$-divisor  $\Delta$ has snc support and $\Delta=\sum _id_i\Delta_i$, with $\Delta_i$ irreducible divisors and  $0\le d_i\le 1$.  
Given  $\alpha\in \Pic^0(B)$, the divisor $K_X+D+f^*(\alpha)$ is $\mathbb Q$-linearly equivalent 
to $K_X+\Delta+f^*(\frac
{1}{N}H+\alpha)$. Since $\frac{1}{N}H+\alpha$ is ample if $H$ is,  the assumptions of \cite[Thm.~6.3]{Fu11} or \cite[Thm.~3.2]{Ambro03} are satisfied and we have $$H^i(B, f_*\mathcal O_X(K_X+D)\otimes\alpha)=0, \quad\text{for all } i>0, \quad\text{and all }  \alpha \in \Pic^0(B).$$ So $f_*(K_X+D)$ is an IT(0)-sheaf, and therefore for all $\alpha \in \Pic^0(B)$ we have   $h^0(B, f_*\mathcal O_X(K_X+D)\otimes\alpha)=h^0(B, f_*(K_X+D))=\ol P_1(V)>0$. So $V^0(X, K_X+D)$ contains the positive dimensional abelian subvariety $f^*\Pic^0(B)=\pi^*\Pic^0(B)$ and $\ol P_2(V)\ge 2$ by Lemma \ref{lem: -Z}.
\end{proof}
 
When  $q(V)$ is equal to 1,  Proposition \ref{prop:not effective} gives: 
\begin{cor}\label{cor: albdim1}
  Let $V$ be a smooth quasi-projective  variety,   $X$ a compactification of $V$  such that $D:=X\setminus V$ is a snc divisor. Let $a_X\colon X\to A(X)$ be the Albanese morphism.  
  
  If $q(V)=\ol P_1(V)=\ol P_2(V)=1$, then $a_X(V)=A(X)$.
  \end{cor}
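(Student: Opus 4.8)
The plan is to combine the surjectivity statement of Proposition \ref{prop:axsu} with the obstruction furnished by Proposition \ref{prop:not effective}, exploiting crucially the fact that the hypothesis $q(V)=1$ forces $A(X)$ to be an elliptic curve. First I would record that, since by definition $q(V)=q(X)=1$, the Albanese variety $A(X)$ has dimension $1$, i.e. it is an elliptic curve; in particular \emph{every} point of $A(X)$ is an ample divisor. Applying Proposition \ref{prop:axsu} with $\ol P_1(V)=\ol P_2(V)=1$, the Albanese morphism $a_X\colon X\to A(X)$ is surjective, so $a_X\inv(p)\neq\emptyset$ for every $p\in A(X)$.

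Next I would argue by contradiction: suppose $a_X(V)\neq A(X)$. By surjectivity there is a point $p\in A(X)\setminus a_X(V)$, and then $a_X\inv(p)$ is nonempty but meets $V$ trivially, so $a_X\inv(p)\subseteq X\setminus V=D$. Set-theoretically $a_X\inv(p)$ is exactly $\operatorname{Supp}(a_X^*p)$ (the support of the fibre divisor over $p$), hence $D$ contains $\operatorname{Supp}(a_X^*p)$. Now I would feed $E:=p$ into Proposition \ref{prop:not effective}: this is an effective divisor on $A(X)$ of the required form, since taking $\pi=\mathrm{id}_{A(X)}$ and $H=p$ exhibits $E=\pi^*H$ with $H$ ample (any point on an elliptic curve is ample). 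Because $\ol P_1(V)=1>0$ and $D$ contains the support of $a_X^*E$, Proposition \ref{prop:not effective} yields $\ol P_2(V)\ge 2$, contradicting the hypothesis $\ol P_2(V)=1$. Therefore no such $p$ exists and $a_X(V)=A(X)$.

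The argument is short once the machinery of \S\ref{any dim} is in place, so I do not expect a serious obstacle; the one point that genuinely requires attention is the first step, namely recognizing that the special case $q(V)=1$ is precisely what allows a \emph{single point} of $A(X)$ to play the role of the ample class $H$ on $B=A(X)$ via $\pi=\mathrm{id}$ in Proposition \ref{prop:not effective}. Once this identification is made, the conclusion follows as a formal contrapositive, with the only routine verification being that $\operatorname{Supp}(a_X^*p)=a_X\inv(p)$, which holds for the surjective morphism $a_X$ onto the curve $A(X)$ regardless of whether the fibre is a divisor (when $\dim X\ge 2$) or a finite set (when $\dim X=1$).
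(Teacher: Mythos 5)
Your proof is correct and is exactly the argument the paper intends: the corollary is stated as an immediate consequence of Proposition \ref{prop:not effective} (combined with the surjectivity from Proposition \ref{prop:axsu}, as Remark \ref{rmk:nodiv} makes explicit), with the key observation being precisely yours --- that since $q(V)=1$ forces $A(X)$ to be an elliptic curve, any missed point $p$ is itself an ample divisor ($\pi=\mathrm{id}$, $H=p$) whose pullback has support in $D$, yielding $\ol P_2(V)\ge 2$. Your routine verifications (nonemptiness of the fibre via surjectivity, $\operatorname{Supp}(a_X^*p)=a_X\inv(p)$) are exactly the details the paper leaves implicit.
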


\begin{rmk}\label{rmk:nodiv}
 Observe that, by Proposition \ref{prop:axsu}, we can deduce that if $\ol P_1(V)=\ol P_2(V)=1$ then  the map $ a_X|_V$ is dominant.
  In addition, as a consequence of Proposition  \ref{prop:not effective}, we know that the complement of  $a_X(V)$ in $A(X)$ does not contain a  divisor. 
  Notice that, if $q(X)>1$, this does not mean that the complement of $a_X(V)$ has codimension >1, since the image of $V$ is not necessarily open in $A(X)$. 
  \end{rmk}

\section{The geometry of the quasi-Albanese morphism} \label{sec: dominant}
 
 The aim of this section is to establish the following fundamental step in the proof of Theorem \ref{Main}:
 \begin{prop}\label{prop: dominant}
 Let $V$ be a smooth complex algebraic surface with  $\overline{q}(V)=2$. 
Assume that:
\begin{itemize}
\item[(a)] $\overline{P}_1(V)=\overline{P}_2(V)=1$ and $q(V)>0$; or
\item[(b)] $\overline{P}_1(V)=\overline{P}_3(V)=1$ and $q(V)=0$. 
\end{itemize}
Then the quasi-Albanese morphism $a_V\colon V\rightarrow A(V)$ is dominant.
 \end{prop}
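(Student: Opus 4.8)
The plan is to treat the two hypotheses separately, since the tools available differ sharply depending on $q(V)$. In both cases the goal is to show that the quasi-Albanese map $a_V\colon V\to A(V)$ is dominant. By Theorem \ref{thm: quasiA}(ii) the compact part of $A(V)$ is the Albanese variety $A(X)$ of a smooth compactification $X$, and $A(V)$ sits in the sequence $1\to \mathbb G_m^r\to A(V)\to A(X)\to 0$ with $r=\ol q(V)-q(V)$. Since $\dim V=2$ and $\ol q(V)=2$, we have $r+q(X)=2$. So the shape of the argument is governed by the splitting $q(V)=q(X)\in\{0,1,2\}$, and I would organize the proof along those three values.

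\textbf{The case $q(X)=2$ (so $r=0$).} Here $A(V)=A(X)$ is already an abelian surface and $a_V=a_X|_V$. Under hypothesis (a) we have $\ol P_1(V)=\ol P_2(V)=1$, so Proposition \ref{prop:axsu} applies and $a_X\colon X\to A(X)$ is surjective; in particular $a_X(V)$ is dense in $A(X)$, which is exactly dominance of $a_V$. This case is immediate and requires no further work.

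\textbf{The case $q(X)=1$ (so $r=1$).} Here $A(X)$ is an elliptic curve and $A(V)$ is a genuine quasi-abelian surface with one-dimensional compact part. Under hypothesis (a) we again have $\ol P_1(V)=\ol P_2(V)=1$ and $q(V)=1$, so Corollary \ref{cor: albdim1} gives $a_X(V)=A(X)$, i.e.\ the composition of $a_V$ with the projection $A(V)\to A(X)$ is dominant onto the elliptic curve. The remaining task is to promote dominance onto $A(X)$ to dominance onto the surface $A(V)$, i.e.\ to rule out that $a_V(V)$ is contained in a curve (necessarily a multisection, or a subgroup translate) mapping onto $A(X)$. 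The plan is to argue by contradiction: if $a_V(V)$ were a curve $C$ surjecting onto $A(X)$, then $V$ would factor through $C$, and by Proposition \ref{prop: log-abel} together with the universal property one could compare logarithmic invariants, forcing a contradiction with $\ol q(V)=2$ (a one-dimensional image can account for at most log-irregularity $1$ from the fiber direction, incompatibly with the full rank-$2$ logarithmic $1$-forms). This dimension count is the crux of the $q(X)=1$ case, and I expect it to be short once set up, leveraging the generic-vanishing input of \S\ref{any dim} rather than a hands-on surface analysis.

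\textbf{The case $q(X)=0$ (so $r=2$).} This is hypothesis (b): $q(V)=0$, $\ol P_1(V)=\ol P_3(V)=1$, and $A(V)=\mathbb G_m^2$ is a pure torus. Now the generic-vanishing machinery of \S\ref{any dim} is vacuous because $A(X)$ is a point, so none of Propositions \ref{prop:axsu}, \ref{prop:not effective} or their corollaries apply. This is the hard case. The plan is to work directly with the compactification $X$ (a surface with $q(X)=p_g(X)=0$, since $h^0(X,K_X)\le h^0(X,K_X+D)=\ol P_1(V)=1$ and $q(X)=0$ force $p_g(X)\le 1$, with $p_g(X)=0$ expected) and with the boundary divisor $D$. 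Dominance of $a_V$ onto $\mathbb G_m^2$ is equivalent to the existence of two multiplicatively independent logarithmic $1$-forms with nonzero residues, which by Proposition \ref{prop: form} translates into the existence of two independent cohomological relations $\sum a_i[D_i]=0$ among the boundary components, with all coefficients nonzero on the relevant components; equivalently, the image of the residue/cycle map from the $[D_i]$ into $H^2(X,\mathbb Z)$ (or $H^1(X,\Omega^1_X)$) has corank at least $2$. I would compute this corank: with $\ol q(V)=2$ and $q(X)=0$ the residue sequence $0\to\Omega^1_X\to\Omega^1_X(\log D)\to\oplus\OO_{D_i}\to 0$ shows that $\ol q(V)=h^0(X,\Omega^1_X(\log D))$ equals the dimension of the kernel of $\delta\colon \oplus_i H^0(\OO_{D_i})\to H^1(\Omega^1_X)$ sending $1_{D_i}\mapsto [D_i]$, so the hypothesis $\ol q(V)=2$ already yields two independent relations among the $[D_i]$. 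The real content, and the main obstacle, is to show that these two relations can be chosen so that a pair of \emph{independent} logarithmic forms has residues supported on a configuration large enough to force the quasi-Albanese image to be two-dimensional rather than landing in a single $\mathbb G_m$ (or in a finite quotient thereof). I anticipate that ruling out the degenerate configurations — where both forms have residues along curves whose classes lie in a single rational line, so that the quasi-Albanese map factors through one $\mathbb G_m$ — will require the plurigenus hypothesis $\ol P_3(V)=1$ in an essential way: the extra vanishing $\ol P_2(V),\ol P_3(V)\le 1$ should be used via Lemma \ref{lem: omegaD} and the connectedness lemmas of \S\ref{sec: prelim} to control $p_a(D)$ and $h^0(D,\omega_D)$, excluding boundary configurations (for instance $D$ containing a fiber of a pencil, or an elliptic/rational configuration producing extra pluri-log-forms) that would otherwise let the image collapse. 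This analysis of the boundary divisor via adjunction and $m$-connectedness is exactly the ``lengthy proof using classical surface arguments'' flagged in the introduction, and it is where I expect essentially all the difficulty of the proposition to reside.
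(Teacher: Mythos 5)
Your handling of the case $q(X)=2$ is fine (dominance already follows from Proposition \ref{prop:axsu}, as you say; the paper appeals to Corollary \ref{thm:qx=2} to get birationality outright, but for the present statement your shortcut works). The case $q(X)=1$, however, breaks at its final step. The claimed contradiction --- ``a one-dimensional image can account for at most log-irregularity $1$, incompatibly with $\ol q(V)=2$'' --- is false: open curves can have logarithmic genus $2$ (an elliptic curve minus two points, or $\mathbb P^1\setminus\{0,1,\infty\}$), and their quasi-Albanese varieties are two-dimensional, so $a_V(V)$ being a curve is perfectly compatible with $\ol q(V)=2$ and with the pullback of two independent logarithmic $1$-forms. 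Indeed Example \ref{rem: sharp} exhibits a surface with $\ol q=2$, $\ol P_1=\ol P_2=1$ whose quasi-Albanese image \emph{is} a curve, so no formal comparison of logarithmic invariants can do the job. The paper's actual argument in Proposition \ref{prop: dominant1} is geometric: if the image is a curve $C$, the universal property makes $\ol C\to A(X)$ an isomorphism, and Corollary \ref{cor: albdim1} then forces $C=\ol C$, i.e.\ $C$ is \emph{complete}; by Proposition \ref{prop: quasiab}(iii) a complete genus-$1$ curve in $A(V)$ is a translate of an algebraic subgroup, and since the quasi-Albanese image must generate $A(V)$, one gets $C=A(V)$, a contradiction. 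The completeness step, not a dimension count, is the idea you are missing.

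For case (b) you offer a plan rather than a proof, and the plan's central reformulation is also off target. Two independent relations among the $[D_i]$ are automatic from $\ol q(V)=2$ by your own residue-sequence computation; the failure of dominance is \emph{not} detected by the residue classes lying ``in a single rational line'' (so that the map factors through one $\mathbb G_m$). In Example \ref{rem: sharp} the image is $\mathbb P^1\setminus\{0,1,\infty\}$ sitting inside $\mathbb G_m^2$ with neither coordinate constant: the cohomological residue configuration is nondegenerate, yet the map is not dominant --- the obstruction is the identical vanishing of the wedge of the two logarithmic forms, a geometric condition invisible to the corank computation. What the paper does instead (Proposition \ref{prop: dominant0}) is assume the image is a curve, show via Stein factorization and log-Abel that it is $\mathbb P^1\setminus\{0,1,\infty\}$ with connected fibers (Lemma \ref{lem: 3fibre}), so that $D$ contains the supports of three full fibers of a fibration $f\colon X\to\mathbb P^1$, and then run a long intersection-theoretic analysis (Lemmas \ref{A} through \ref{genus1}): extraction of a $2$-connected $B\preceq D$ with $p_a(B)=1$, the alternative $h^0(2K_X+2B)\ge 2$ versus $K_X+B\sim\sum E_j$, and finally the Kodaira classification of elliptic fibers (types $I_b^*$, $IV^*$, Euler-number bounds $c_2=12$ resp.\ $24$) to contradict $\ol P_2(V)=\ol P_3(V)=1$. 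Note also that your expectation $p_g(X)=0$ is unjustified: Lemma \ref{lem: B}(b) has a genuine $p_g(X)=1$ branch in which the minimal model is a K3 surface --- exactly where the sharpness example lives --- so any correct argument must treat that branch, which your outline does not.
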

 
We use different approaches for the case $q(V)=0$ and $q(V)>0$, so we treat them separately. In fact,  Proposition \ref{prop: dominant} is just the combination of Propositions \ref{prop: dominant1} and \ref{prop: dominant0} below. 

\subsection{The quasi-Albanese map when $q(V)\ge 1$}

\begin{prop}\label{prop: dominant1}
Let $V$ be  a smooth algebraic surface such that $\overline{P}_1(V)=\overline{P}_2(V)=1$, $\overline{q}(V)=2$, and $q(V)\ge 1$. Then the quasi-Albanese morphism of $V$ is dominant, and hence generically finite.
\end{prop}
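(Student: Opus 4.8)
The plan is to split according to the value of $q(V)$, which under the hypotheses lies in $\{1,2\}$. When $q(V)=2$ we have $\ol q(V)=q(V)=2$, so by Theorem \ref{thm: quasiA}(ii) the torus part of $A(V)$ is trivial, $A(V)=A(X)$ is an abelian surface and $a_V=a_X$. Proposition \ref{prop:axsu} then shows that $a_X$ is surjective (in fact Corollary \ref{thm:qx=2} already gives that $a_V$ is birational), so there is nothing more to prove in this case. The substance of the statement is therefore the case $q(V)=1$, which I treat below.

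Assume $q(V)=1$, so that $A(X)$ is an elliptic curve $E$. First I would invoke Corollary \ref{cor: albdim1} to conclude that the Albanese morphism $f:=a_X\colon X\to E$ is surjective. Since $\ol q(V)=2$ while $q(X)=1$, the space $H^0(X,\Omega^1_X(\log D))$ is two dimensional, and I fix a basis $\omega_0,\sigma$ in which $\omega_0$ spans $H^0(X,\Omega^1_X)$ (so that, up to scaling, $\omega_0=f^*dz$ for the generator $dz$ of $H^0(E,\Omega^1_E)$) and $\sigma$ is genuinely logarithmic, with poles along $D$. The key reformulation is that, as $a_V$ is obtained by integrating the translation‑invariant logarithmic forms pulling back to $\omega_0,\sigma$, and the wedge of these invariant forms generates $\OO_Z(K_Z+\Delta)$ on the compactification $Z$ of Corollary \ref{cor: Z}, the map $a_V$ has two dimensional image precisely when $\omega_0\wedge\sigma\neq 0$ in $H^0(X,\OO_X(K_X+D))$. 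Thus it suffices to rule out $\omega_0\wedge\sigma=0$.

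So suppose $\omega_0\wedge\sigma=0$. Then $\sigma=h\,\omega_0$ for a meromorphic function $h$, and since both forms are closed (Deligne), $0=d\sigma=dh\wedge\omega_0$, so $h$ is constant along the generic fibre of $f$. Because $q(X)=1$ forces the Albanese map $f\colon X\to E$ to have connected fibres — its Stein factorization $X\to B\to E$ would otherwise give a curve $B$ of genus $\ge 1$ mapping finitely to $E$, and the universal property of the Albanese forces $B\cong E$ — the function $h$ descends to $E$ and hence $\sigma=f^*\tau$ for a meromorphic $1$-form $\tau$ on $E$. As $\sigma$ is logarithmic but not regular, $\tau$ has a (necessarily simple) pole at some point $e_1\in E$, and since the poles of $\sigma=f^*\tau$ all lie along $D$, the whole fibre $F_1=f^{-1}(e_1)$ is contained in $D$. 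Equivalently $D$ contains the support of $a_X^*e_1$, so Proposition \ref{prop:not effective} yields $\ol P_2(V)\ge 2$, contradicting the hypothesis. Therefore $\omega_0\wedge\sigma\neq 0$, the map $a_V$ is dominant, and since $\dim V=\dim A(V)=2$ it is generically finite.

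The main obstacle is the middle step: translating the failure of dominance into the concrete vanishing $\omega_0\wedge\sigma=0$, and then showing that this vanishing forces $\sigma$ to be pulled back from $E$. Closedness of logarithmic forms and connectedness of the Albanese fibres are exactly what make the descent $\sigma=f^*\tau$ go through, after which Proposition \ref{prop:not effective} converts the presence of a single boundary fibre into the contradiction $\ol P_2(V)\ge 2$.
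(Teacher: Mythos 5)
Your proof is correct, but it takes a genuinely different route from the paper in the essential case $q(V)=1$. The paper argues on the image: assuming $a_V(V)$ is a curve $C$, it passes to the smooth model $\ol C$, uses the universal property to identify $\ol C$ with the elliptic curve $A(X)$, invokes Corollary \ref{cor: albdim1} to conclude $C=\ol C$ is complete, and then finishes group-theoretically --- $C$ is a quasi-abelian subvariety up to translation by Proposition \ref{prop: quasiab}(iii), yet must generate $A(V)$, forcing $C=A(V)$, absurd. You instead argue on the differentials: non-dominance is equivalent to the vanishing of $\omega_0\wedge\sigma$ in $H^0(X,K_X+D)$ (legitimate, since $a_V^*$ identifies the invariant logarithmic forms with $H^0(X,\Omega^1_X(\log D))$ and the generic rank of $da_V$ is the generic rank of the evaluation of that space); then closedness of logarithmic forms plus connectedness of the fibres of $a_X\colon X\to E$ (your Stein-factorization sketch is the standard argument and is fine) lets you descend $\sigma=f^*\tau$, and the pole of $\tau$ puts the support of a full fibre $a_X^*e_1$ inside $D$, so Proposition \ref{prop:not effective} yields $\ol P_2(V)\ge 2$. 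Note that both proofs ultimately rest on Proposition \ref{prop:not effective} --- the paper consumes it through Corollary \ref{cor: albdim1}, you apply it directly --- but your route handles the ``complete image curve'' case and the ``incomplete image'' case in one stroke, dispensing with the group-generation endgame and Proposition \ref{prop: quasiab}, at the price of the analytic descent step and the (standard, but worth stating) fact that $\omega_0\wedge\sigma$ vanishes as a section of $\OO_X(K_X+D)$ once it vanishes on the dense open set $V$. Both are sound; yours is arguably more self-contained, the paper's more geometric.
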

\begin{proof}
When $q(V)=\ol q(V)=2$ the map  $a_V$ is dominant  by Corollary \ref{thm:qx=2}.

Assume now that $q(V)=1$ and, by contradiction, that the image of $a_V$ is a curve $C$.  Denote by $\overline C$ the smooth projective model of $C$; since $\ol C$ dominates the elliptic curve $A(X)$, the curve $\overline  C$ has positive genus and $a_V$ extends to a morphism $f\colon X \to \ol C$ such that the Albanese morphism factorizes through $\overline C$. By the universal property of the Albanese map, the map $\ol C\to A(X)$ is an isomorphism. Now Corollary \ref{cor: albdim1} implies that $C=\overline C$. In particular, by Proposition \ref{prop: quasiab}  up to translation $C$ is an algebraic subgroup of $A(V)$. On the other hand, $C$ must generate $A(V)$ as an algebraic group, so it follows $C=A(V)$, a contradiction. 
 \end{proof}

\subsection{The quasi-Albanese map when $q(V)=0$}
In this section we prove the following
\begin{prop}\label{prop: dominant0}
Let $V$ be  a smooth algebraic surface such that $\overline{P}_1(V)=\overline{P}_3(V)=1$, $\overline{q}(V)=2$, and $q(V)=0$. Then the quasi-Albanese morphism of $V$ is dominant, and hence generically finite.
\end{prop}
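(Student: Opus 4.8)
The plan is to show that the two independent logarithmic $1$-forms on $V$ are generically independent, i.e. that their wedge is a \emph{nonzero} section of $\mathcal O_X(K_X+D)$. First I would record the reductions forced by $q(V)=0$. The exact sequence \eqref{eq:ses} gives $r=\ol q(V)-q(V)=2$, so $A(V)\cong\mathbb G_m^2$ and its compact part is a point; in particular $q(X)=0$, so $H^0(X,\Omega^1_X)=0$ and the two generators $\omega_1,\omega_2$ of $H^0(X,\Omega^1_X(\log D))$ both have poles along $D$. The pullback under $a_V$ of the invariant $2$-form on $\mathbb G_m^2$ is $\omega_1\wedge\omega_2\in H^0(X,\Omega^2_X(\log D))=H^0(X,K_X+D)$, and $a_V$ is dominant precisely when this is nonzero. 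Assume for contradiction that it vanishes; then $\omega_1,\omega_2$ are proportional at the generic point and the image $C:=a_V(V)$ is a curve (not a point, since $\omega_1\ne0$).

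Next I would produce the fibration governing the situation. Resolving indeterminacy and taking a Stein factorization of the extended map, I obtain a fibration $f\colon X\to B$ with connected fibres onto a smooth projective curve $B$, through which $a_V$ factors, and both $\omega_1,\omega_2$ are pulled back from $B$. Since $f^*\colon H^0(B,\Omega^1_B(\log\Delta_B))\to H^0(X,\Omega^1_X(\log D))$ is injective with image containing $\omega_1,\omega_2$, the target being two-dimensional forces $\ol q(B\setminus\Delta_B)=2$. As $q(X)=0\ge g(B)$, we get $B=\mathbb P^1$, and then $\ol q=2$ forces exactly three boundary points $\Delta_B=\{p_1,p_2,p_3\}$, so $\deg(K_B+\Delta_B)=1$. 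Thus the whole problem is reduced to a fibration over $\mathbb P^1$ with precisely three special values.

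I would then pin down the fibre type. Let $F$ be a general fibre and $F\cap V=F\setminus D$ its open part. Easy addition (\cite{Ii82}) gives $\ol\kappa(V)\le\ol\kappa(F\cap V)+1$; since $\ol P_1(V)=1$ forces $\ol\kappa(V)\ge0$, the fibre cannot have $\ol\kappa(F\cap V)=-\infty$, which rules out $F\cap V$ being $\mathbb A^1$ or $\mathbb P^1$. By subadditivity over a curve (\cite{Ka81}), $\ol\kappa(V)\ge\ol\kappa(F\cap V)+1$; when $\ol\kappa(F\cap V)\ge1$ the direct image below has rank $\ge 2$ already at $m=3$, which only makes a second section easier to produce. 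The genuinely delicate situation is therefore $\ol\kappa(F\cap V)=0$, i.e. $F\cap V$ is a complete elliptic curve or $\mathbb G_m$, so that $\ol P_m(F\cap V)=1$ for every $m$.

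Finally, the crux. Writing $K_X+D\sim f^*\mathcal O_{\mathbb P^1}(1)+R$ with $R$ the relative logarithmic canonical divisor, the projection formula gives $\ol P_m(V)=h^0(\mathbb P^1,\mathcal O(m)\otimes f_*\mathcal O_X(mR))$. In the borderline case $f_*\mathcal O_X(mR)=\mathcal O_{\mathbb P^1}(a_m)$ is a line bundle, so $\ol P_m(V)=\max(0,m+a_m+1)$; the hypothesis $\ol P_1(V)=1$ gives $a_1=-1$, and the desired contradiction $\ol P_3(V)\ge2$ is exactly the inequality $a_3\ge-2$. The main obstacle — and the heart of the $q(V)=0$ case — is to compute $\deg f_*\mathcal O_X(mR)$ through a fine analysis of the three special fibres and of the horizontal and vertical components of $D$: via the (logarithmic) canonical bundle formula the degree acquires fractional contributions $\sum_i\lfloor m(1-1/m_i)\rfloor$ from multiple fibres together with a moduli term, and one must check that their combinatorics push the degree past the threshold exactly at $m=3$. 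This is sharp, since the same computation permits $a_2=-2$, that is $\ol P_2(V)=1$ with $a_V$ non-dominant, which is the content of Example \ref{rem: sharp}.
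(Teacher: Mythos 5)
Your overall framing is sound and genuinely different from the paper's: you reduce, as the paper does (Lemma \ref{lem: 3fibre}), to a fibration $f\colon X\to\mathbb P^1$ whose boundary consists of exactly three special fibres, but you then propose to conclude via a log canonical bundle formula and the degrees $a_m$ of the direct images $f_*\mathcal O_X(mR)$, whereas the paper argues through explicit adjunction-theoretic surface geometry: a $2$-connected subdivisor $B\preceq D$ with $p_a(B)=1$, the $-1$-curves of a blow-down $\rho\colon X\to T$, and the Kodaira classification of the three special fibres (Lemmas \ref{lem: B} through \ref{genus1} and the final multiplicity analysis). Your bookkeeping correctly isolates the decisive inequality in the borderline case $\ol\kappa(F\cap V)=0$: the hypothesis gives $a_1=-1$, the sought contradiction is $a_3\ge -2$, and you rightly note that $a_2=-2$ can occur, matching the sharpness shown by Example \ref{rem: sharp}.

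But that is exactly where the gap is: the sentence ``one must check that their combinatorics push the degree past the threshold exactly at $m=3$'' is the entire hard half of the paper's proof, and you do not carry it out. No off-the-shelf positivity can be invoked here: your own computation $a_1=-1<0$ shows that $f_*\mathcal O_X(R)$ has negative degree, so the fractional and multiple-fibre corrections work \emph{against} you, and $a_3\ge-2$ must be extracted from the specific geometry of the three boundary fibres -- in the paper this is done via the Euler-number bounds $c_2=12$ (rational case) resp.\ $24$ (K3 case) forcing fibres of type $I_0^*$ or $IV^*$ in the elliptic case (Lemma \ref{genus1}), and the analysis showing $\ol F_2=2M_0+N_1+N_2$ in the genus-$0$ case, each step producing an explicit effective divisor inside $2(K_X+D)$ or $3(K_X+D)$. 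For the same reason your dismissal of the cases $\ol\kappa(F\cap V)\ge 1$ (``rank $\ge 2$ only makes a second section easier to produce'') is unjustified: a rank-$2$ sheaf on $\mathbb P^1$ twisted by $\mathcal O(3)$ has $h^0\ge 2$ only if one bounds the degrees of its summands from below, which is precisely the semipositivity that fails naively in this situation. Two smaller points: the descent of $\omega_1,\omega_2$ to the base needs the remark that they are pulled back from $A(V)$ and that $a_V$ factors through its image curve (the paper gets this from the universal property together with Proposition \ref{prop: log-abel}); and the identity $\ol P_m(V)=h^0\bigl(\mathbb P^1,\mathcal O(m)\otimes f_*\mathcal O_X(mR)\bigr)$ presupposes a compactification on which $f$ extends $a_V$, which should be fixed at the outset as in Setting \ref{setup}.
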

The proof is based on numerical arguments and is quite intricate, so we break it into several steps. We start by proving  two  results on curves on smooth projective surfaces. 
\medskip 
\begin{lm} \label{A} Let $X$ be a non-singular projective surface  and $A$ a 1-connected effective divisor with $p_a(A)=1$. 

Then $A$ contains a 2-connected divisor $B$  such that $p_a(B)=1$.
\end{lm}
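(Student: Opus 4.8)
The plan is to produce $B$ as a minimal element of the set
\[
\mathcal S:=\{\,B : 0\prec B\preceq A,\ p_a(B)\ge 1\,\},
\]
which is nonempty (it contains $A$, since $p_a(A)=1$) and finite, hence admits a minimal element with respect to $\preceq$. I will then show that any such minimal $B$ is automatically $2$-connected with $p_a(B)=1$, which is exactly what the statement requires; in particular the construction never needs to know in advance where the desired $B$ sits inside $A$.

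First I would check that a minimal $B$ is $1$-connected. If it decomposed as $B=B_1+B_2$ with $B_1,B_2\succ 0$ and $B_1B_2\le 0$, then the genus formula of Lemma~\ref{pa sum}(i) gives
\[
p_a(B)=p_a(B_1)+p_a(B_2)+B_1B_2-1\le p_a(B_1)+p_a(B_2)-1,
\]
so $p_a(B)\ge 1$ forces $p_a(B_i)\ge 1$ for some $i$, contradicting minimality since $B_i\prec B$. The same formula then yields $2$-connectedness: if $B$ were $1$- but not $2$-connected, there would be a decomposition $B=B_1+B_2$ with $B_1B_2=1$, and Lemma~\ref{pa sum}(i) would give $p_a(B_1)+p_a(B_2)=p_a(B)\ge 1$, again producing a proper subdivisor with $p_a\ge 1$ and contradicting minimality. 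Thus minimality alone, fed through the additivity of $p_a$, delivers both connectivity properties.

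The delicate point—and the step I expect to be the main obstacle—is the upper bound $p_a(B)\le 1$. One cannot argue directly that $p_a(B)\le p_a(A)$, because $p_a$ is \emph{not} monotone under $\preceq$. The key idea is to pass to $h^0(\omega)$, which \emph{is} monotone: since $B$ and $A$ are both $1$-connected, Lemma~\ref{lem: pa0}(i) identifies $p_a(B)=h^0(B,\omega_B)$ and $p_a(A)=h^0(A,\omega_A)=1$, while Lemma~\ref{pa sum}(ii) gives $h^0(B,\omega_B)\le h^0(A,\omega_A)$ for $B\preceq A$. Combining these, $p_a(B)\le 1$, and since $B\in\mathcal S$ we have $p_a(B)\ge 1$, whence $p_a(B)=1$. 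Together with the $2$-connectedness established above, this exhibits the required divisor $B\preceq A$, completing the proof.
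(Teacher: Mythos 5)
Your proof is correct, but it follows a genuinely different route from the paper's. The paper argues by iterated splitting: if $A$ is not $2$-connected, it chooses a decomposition $A=A_1+A_2$ with $A_1A_2=1$ and $A_1$ \emph{minimal} with respect to this property; this minimality forces $A_1$ to be $2$-connected and $A_2$ to be $1$-connected, Lemma~\ref{pa sum}(i) gives $p_a(A_1)+p_a(A_2)=1$ with both terms nonnegative (via Lemma~\ref{lem: pa0}), and one either stops at $A_1$ or recurses on $A_2$, which again satisfies the hypotheses of the lemma. You instead make a single extremal choice --- a $\preceq$-minimal element $B$ of the (finite, nonempty) set $\mathcal{S}$ of nonzero subdivisors with $p_a\ge 1$ --- and verify its properties directly: genus additivity shows that any decomposition $B=B_1+B_2$ with $B_1B_2\le 1$ would hand $p_a\ge 1$ to a strictly smaller subdivisor, so $B$ is $2$-connected; and you correctly identify the one delicate point, the non-monotonicity of $p_a$ under $\preceq$, which you circumvent by passing to $h^0(\omega)$ via Lemma~\ref{lem: pa0}(i) and the monotonicity statement Lemma~\ref{pa sum}(ii), yielding $p_a(B)=1$. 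Your version avoids the structural facts the paper uses without proof (that a minimal splitting with $A_1A_2=1$ has $A_1$ $2$-connected and $A_2$ $1$-connected) and dispenses with the induction/termination bookkeeping of the recursion; the price is the extra appeal to Lemma~\ref{pa sum}(ii), which the paper's argument does not need since $1$-connectedness and $p_a=1$ are carried along at each step. (As a small remark, your two connectivity steps could be merged: a single decomposition with $B_1B_2\le 1$ already gives $p_a(B_1)+p_a(B_2)\ge 1$ and hence the same contradiction with minimality.)
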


\begin{proof} If $A$ is  2-connected there is of course nothing to prove. Otherwise,  take any decomposition  $A=A_1+A_2$ with $A_1A_2=1$ and $A_1$ minimal with respect to $A_1A_2=1$. $A_2$ is 1-connected,   $A_1$ is 2-connected and by  Lemma \ref{pa sum} $p_a(A_1)+p_a(A_2)=1$, $p_a(A_1)\geq 0$,  $p_a(A_2)\geq 0$. If $p_a(A_1)=1$ we have proved the statement, if not we repeat the argument on $A_2$.   
\end{proof}

\begin{lm} \label{B} Let $X$ be a non-singular projective surface  and $B$ a 2-connected effective divisor with $p_a(B)=1$. Then:
\begin{enumerate}
\item  if $B$ is not irreducible then every component $\Gamma$ of $B$ is smooth rational and  satisfies $(K_X+B)\Gamma=0$;
\item $\omega_B=\mathcal O_B$;
\item if  $\Gamma$ is an irreducible component of $B$ and $B-\Gamma\succ 0$, then  $B-\Gamma$ is  1-connected.
\end{enumerate} 
\end{lm}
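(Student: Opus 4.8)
The plan is to derive everything from the adjunction formula \eqref{eq: adj}, which for $p_a(B)=1$ reads $(K_X+B)B=0$, together with the 2-connectedness hypothesis. The heart of the matter is item (i), from which (ii) and (iii) will follow quickly. First I would record that for \emph{every} irreducible component $\Gamma$ of $B$ one has the inequality $(K_X+B)\Gamma\ge 0$. Writing $B=\Gamma+(B-\Gamma)$ with $B-\Gamma\succ 0$ (which holds for every component precisely because $B$ is not irreducible, including the non-reduced case $B=n\Gamma$), adjunction gives
\[
(K_X+B)\Gamma=(K_X+\Gamma)\Gamma+\Gamma(B-\Gamma)=\bigl(2p_a(\Gamma)-2\bigr)+\Gamma(B-\Gamma),
\]
where $p_a(\Gamma)\ge 0$ since $\Gamma$ is irreducible, while $\Gamma(B-\Gamma)\ge 2$ by 2-connectedness; hence $(K_X+B)\Gamma\ge 0$. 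Writing $B=\sum_i n_i\Gamma_i$ with $n_i>0$, the weighted sum $\sum_i n_i(K_X+B)\Gamma_i$ equals $(K_X+B)B=0$, so every summand vanishes. Thus $(K_X+B)\Gamma=0$ for all $\Gamma$, and feeding this back into the displayed equation forces $p_a(\Gamma)=0$ (so $\Gamma$ is smooth rational) and $\Gamma(B-\Gamma)=2$, which proves (i).

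Item (ii) then drops out. Since $B$ is 2-connected, hence 1-connected, Lemma \ref{lem: pa0}(i) gives $h^0(B,\omega_B)=p_a(B)=1$, and $\omega_B=\OO_B(K_X+B)|_B$ has degree $(K_X+B)\Gamma=0$ on every component by (i) (in the irreducible case this degree is $(K_X+B)B=0$ directly). Applying Lemma \ref{lem: pa0}(ii) to the nonzero section of this line bundle, which has degree zero on every component, then yields $\omega_B=\OO_B$.

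For (iii), set $C:=B-\Gamma\succ 0$ and suppose toward a contradiction that $C=C_1+C_2$ is a decomposition into effective nonzero divisors with $C_1C_2\le 0$. Applying 2-connectedness of $B$ to the two decompositions $B=C_1+(\Gamma+C_2)$ and $B=C_2+(\Gamma+C_1)$ gives $C_1\Gamma+C_1C_2\ge 2$ and $C_2\Gamma+C_1C_2\ge 2$; adding these and using the identity $\Gamma C=2$ established in (i) yields $2+2C_1C_2\ge 4$, i.e. $C_1C_2\ge 1$, a contradiction. Hence $C=B-\Gamma$ is 1-connected.

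I expect the only delicate point to be the bookkeeping in (i)---in particular checking that $B-\Gamma\succ 0$ for every component (which is exactly what ``$B$ not irreducible'' buys us) and that the weighted-sum-of-non-negatives argument is applied with the correct multiplicities $n_i$. Once (i) is in hand, items (ii) and (iii) are immediate consequences of adjunction and the two parts of Lemma \ref{lem: pa0}.
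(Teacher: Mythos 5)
Your proof is correct and takes essentially the same route as the paper's: the same adjunction-plus-2-connectedness estimate $(K_X+B)\Gamma=(K_X+\Gamma)\Gamma+\Gamma(B-\Gamma)\ge -2+2=0$ against $(K_X+B)B=0$ for (i), the same appeal to Lemma \ref{lem: pa0} for (ii), and for (iii) the identical pair of decompositions (your $B=C_1+(\Gamma+C_2)$ and $B=C_2+(\Gamma+C_1)$ is the paper's $A_i(A_j+\Gamma)\ge 2$, combined with $\Gamma(B-\Gamma)=2$). Your explicit weighted sum $\sum_i n_i(K_X+B)\Gamma_i=0$ and your care with the non-reduced case $B=n\Gamma$ and with the irreducible case in (ii) merely spell out details the paper leaves implicit.
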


\begin{proof} 
  (i) Since $p_a(B)=1$, $(K_X+B)B=0$.  On the other hand for every  irreducible component $\Gamma$ of $B$ one has $(K_X+B)\Gamma=(K_X+\Gamma)\Gamma+ (B-\Gamma)\Gamma\geq 0$ because $(K_X+\Gamma)\Gamma\geq -2$ by adjunction and  $(B-\Gamma)\Gamma\geq 2$ since  $B$ is 2-connected and reducible.  So  necessarily  $(K_X+B)\Gamma=0$, and $\Gamma$ is smooth rational.%
  
  (ii) Since, by (i)  $\omega_B$ has degree $0$ on every component of $B$ and $h^0(B, \omega_B)=1$, by Lemma \ref{lem: pa0}  $\omega_B=\OO_B$.
   
  (iii)  By  the proof of (i) one has $\Gamma(B-\Gamma)=2$. Let $B-\Gamma=A_1+A_2$ with $A_1>0,A_2>0$. Then because $B$ is 2-connected $A_i(A_j+\Gamma)\geq 2$ for $\{i,j\}=\{1,2\}$. Since $(A_1+A_2)\Gamma=2$ necessarily $A_1A_2\geq 1$ and so $B-\Gamma$ is 1-connected. 
 \end{proof} 

\begin{lm}\label{sumE}
 Let $X$ be a non-singular projective surface with $q(X)=0$ and let $B$ be an effective 2-connected divisor satisfying   $p_a(B)=1$. Then one of the following occurs: 
\begin{enumerate}

\item[(a)]    $h^0(X, 2K_X+2B)\geq 2$;

\item[(b)]    $X$ is rational,  and  there is a blow-down morphism $\rho\colon X\to T$  with exceptional divisor $\sum_{j=1}^nE_j$  (where the $E_j$ are $-1$-curves) such that   
$K_X+B\sim \sum_{j=1}^nE_j$ and $B$ is disjoint from $\sum_{j=1}^nE_j$.
\end{enumerate} 
\end{lm}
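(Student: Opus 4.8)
The plan is to set $L:=K_X+B$ and to argue that either the system $|2L|=|2K_X+2B|$ moves, which is case (a), or $X$ is rational and $L$ is contracted to the zero class, which is case (b). First I would record the numerics of $L$. By \eqref{eq: adj}, $p_a(B)=1$ gives $LB=0$, and Lemma \ref{B} gives $\omega_B=\OO_B$, whence $L|_B=\OO_B$ and $L\Gamma=0$ for every component $\Gamma$ of $B$. From the restriction sequence $0\to\OO_X(K_X)\to\OO_X(L)\to\omega_B\to 0$ together with $h^1(X,K_X)=q(X)=0$ I obtain $h^0(L)=p_g(X)+1$. Thus if $p_g(X)\ge 1$ then $|L|$ contains two distinct members $D_1\ne D_2$, and $2D_1$ and $D_1+D_2$ are two distinct members of $|2L|$, so $h^0(2K_X+2B)\ge 2$ and (a) holds. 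Hence I may assume $p_g(X)=q(X)=0$, so that $L$ has a unique effective representative $N$.

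Next comes a Riemann--Roch step. From $LB=0$ one gets $NK_X=N^2$, so $\chi(2N)=1+N^2$; moreover $h^2(2N)=h^0(-(N+B))=0$ since $N+B>0$. Therefore $h^0(2N)=1+N^2+h^1(2N)\ge 1+N^2$, and $N^2\ge 1$ already yields (a). This reduces everything to the regime $p_g=q=0$, $N^2\le 0$, under the standing assumption $h^0(2N)=1$.

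The crux is to prove that $X$ is rational. Since $q(X)=0$, by Castelnuovo's criterion it suffices to show $P_2(X)=h^0(2K_X)=0$; multiplying by a section of $2B$ gives an injection $H^0(2K_X)\hookrightarrow H^0(2N)$, so $h^0(2K_X)\le 1$, and if $X$ were non-rational then $\kappa(X)\ge 0$ and $P_2(X)=1$. As surfaces of general type have $P_2\ge 2$, only the Enriques ($\kappa=0$) and properly elliptic ($\kappa=1$) cases with $p_g=q=0$, $P_2=1$ survive, and I expect to exclude them by exhibiting a second section of $2K_X+2B$: via the $2$-torsion of $K_X$ in the Enriques case, and via the canonical bundle formula and the multiple fibres in the elliptic case, in both exploiting that every component of $B$ meets $N$ in degree $0$. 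This classification-dependent exclusion is the main obstacle.

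Finally, assuming $X$ rational, I would show that $N$ is a sum of $(-1)$-curves disjoint from $B$. The guiding observations are that $h^0(N)=h^0(2N)=1$ forbids components of $N$ from moving or from being genus-one fibres: a smooth rational component $C\le N$ with $C^2\ge 0$ would move in a pencil, forcing $h^0(N)\ge 2$, while a genus-one fibre $F$ would give $h^0(2N)\ge h^0(2F)=2$ (case (a)); so the components of $N$ are negative rigid curves. Moreover $NK_X=N^2\le 0$ produces, when $N\ne 0$, a component $C$ with $K_XC<0$, which adjunction then forces to be a genuine $(-1)$-curve. Contracting $C$ and inducting on the Picard number, I would build a blow-down $\rho\colon X\to T$ onto a smooth rational surface with exceptional divisor exactly $\operatorname{Supp} N$; checking that $N$ and $B$ share no component (so that $NB=0$ forces $N\cap B=\emptyset$), and pushing forward, one gets $K_T+\rho_*B\sim\rho_*N=0$, that is $K_X+B\sim\sum_j E_j$ with $B$ disjoint from $\sum_j E_j$, which is (b). When $N=0$ one has $B\sim -K_X$ on the rational surface $X$, the $n=0$ instance of (b). The remaining work is the bookkeeping that the contracted configuration stays of the required form and that the genus-one sub-configurations are correctly routed into case (a).
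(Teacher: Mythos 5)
Your opening reductions are correct and close in spirit to the paper's (the paper also gets $h^0(K_X+B)=p_g(X)+1$ via Lemma \ref{lem: omegaD}), but the proposal has a genuine gap exactly where you flag "the main obstacle": the rationality step. Having reduced to $p_g(X)=q(X)=0$ and $P_2(X)\le 1$, you must exclude surfaces birational to Enriques surfaces and to properly elliptic surfaces with $p_g=q=0$, $P_2=1$. The Enriques half can indeed be completed by your torsion trick: for $\eta$ the $2$-torsion class one has $\chi(K_X+B+\eta)=1$ and $h^2(K_X+B+\eta)=h^0(-B-\eta)=0$, so there is $N'\in|K_X+B+\eta|$, and $2N\ne 2N'$ both lie in $|2K_X+2B|$. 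But the properly elliptic half is only a hope, not an argument: you would have to identify the minimal model as a Dolgachev surface (exactly two multiple fibres with $1/m_1+1/m_2<1$, since $P_2=1$), control how $B$ and $N$ sit relative to the elliptic pencil on a possibly non-minimal $X$, and manufacture a second section of $2K_X+2B$ from the multiple-fibre contributions in the canonical bundle formula; none of this is routine and none of it is carried out. A second, related gap sits in your final contraction step: when $N\ne 0$ but $N^2=NK_X=0$, it can happen that every component $C$ of $N$ has $K_XC=0$ (e.g.\ a connected configuration of $-2$-curves with $p_a=1$), so your mechanism "some component has $K_XC<0$, hence is a $(-1)$-curve" produces nothing, and the assertion that such an $N$ is a genus-one \emph{fibre} of an elliptic fibration (so that $h^0(2N)\ge 2$) is precisely what would need proof, not bookkeeping.

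It is instructive that the paper's proof of Lemma \ref{sumE} avoids both difficulties without any surface classification beyond Castelnuovo. It contracts irreducible $(-1)$-curves $\theta$ with $(K_X+B)\theta<0$, which are automatically disjoint from $B$ because $(K_X+B)\Gamma=0$ for every component $\Gamma$ of $B$ (Lemma \ref{B}), until $K_T+\rho(B)$ is nef; then either $K_T+\rho(B)\sim 0$, which is case (b) with rationality deduced from $K_X\cdot\rho^*L=-B\cdot\rho^*L<0$ for $L$ ample (so $\kappa(X)=-\infty$ and $q=0$), or the effective divisor $B_1\in |K_T+\rho(B)|$ has $p_a(B_1)\ge 1$ by nefness, giving $h^0(2K_T+\rho(B))\ge 1$, and then the key mechanism your proposal lacks: since $q=0$, the restriction $H^0(K_T+\rho(B))\to H^0(\omega_{\rho(B)})=H^0(\OO_{\rho(B)})$ is surjective, so squaring gives a section of $2K_T+2\rho(B)$ not vanishing identically on $\rho(B)$, which together with the subspace $H^0(2K_T+\rho(B))$ yields $h^0(2K_X+2B)\ge 2$. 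This restriction-to-$B$ argument, powered by $\omega_B=\OO_B$, uniformly disposes of the Enriques, Dolgachev, and $K$-trivial-$N$ regimes that your classification route leaves open; to repair your proof, replace the exclusion-by-classification and the genus-one routing with it.
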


\begin{proof}
 From Lemma \ref{lem: omegaD} we obtain $h^0(X, K_X+B)=p_g(X)+1\geq 1$.

Assume that $K_X+B$ is nef. Then since $(K_X+B)B=0$, we have $K_X(K_X+B)=(K_X+B)^2\geq 0$ and 
$(2K_X+B) (K_X+B)=2K_X(K_X+B)\geq 0$.  So we have two possibilities: 
either $K_X+B\sim 0$  (and $p_g(X)=0$) or   there is a non zero effective divisor   $B_1$ in $|K_X+B|$,  and $p_a(B_1)\geq 1 $  implying by Lemma \ref{lem: omegaD} that  $h^0(X, K_X+B_1)\geq 1+p_g$, i.e. $h^0(X, 2K_X+B)\geq 1$. 

In the second case  the restriction map 
 $H^0(X, K_X+B)\to H^0(B, \omega_B)$ is surjective because $q(X)=0$. Since   $\omega_B=\mathcal O_B$, the map  $ H^0(X, 2K_X+2B)\to H^0(B, \omega_B^{\otimes 2})$ is also nonzero. So the  exact sequence 

$$0\to H^0(X, 2K_X+B)\to H^0(X, 2K_X+2B)\to H^0(B, \omega_B^{\otimes 2})$$  gives  $h^0(X, 2K_X+2B)\geq 2$.
  \medskip

  Assume now that $B_1$ is not nef and let $\theta$ be an irreducible curve with $B_1\theta<0$. Since  $B_1$ is effective,  $\theta$ is a component of $B_1$ with $\theta^2<0$.  In addition, for every component $\Gamma$ of $B$,  we have, by Lemma \ref{B},  $B_1\Gamma= 0$, so $\theta$ is not a component of $B$,  and thus $B\theta \ge 0$. Then the only possibility is that $\theta$ is an irreducible  $-1$-curve disjoint from $B$. We contract $\theta$ and replace $B$ by its image under the contraction; repeating this process we eventually end up with a birational morphism  of smooth surfaces $ \rho\colon  X\to T$  such that 
$K_{T}+\rho(B)$ is nef, and $\rho(B)$ is still a 2-connected divisor with $p_a=1$.

By the discussion in the previous case, we see that either  $h^0(T, 2K_T+2\rho(B))\ge 2$ or  $K_T+\rho(B)\sim 0$. In the former case $h^0(X, 2K_X+2B)\ge h^0(T, 2K_T+2\rho(B))\ge 2$. In the latter  case,   taking pull-backs we get  $\rho^*(K_T)+B=K_X-\sum_{j=1}^nE_j+B \sim 0$. So, if $L$ is an ample divisor on $T$, then $\rho^*L$ is nef and big and it satisfies $K_X\rho^*L=-B\rho^*L<0$, so $\kappa(X)=-\infty$. Since $q(X)=0$, the surface $X$ is rational. 
\end{proof}

For the rest of the section, we  refer to  the following situation:
\begin{set}\label{setup}
We let $V$ be a smooth open algebraic surface with $\overline{q}(V)=2$, $q(V)=0$, and $\ol P_1(V)=\ol P_3(V)=1$. We consider  the standard compactification $Z=\mathbb P^1\times \mathbb P^1$ of  $A(V)=\mathbb G_m^2$,  and we denote by $\Delta:=Z\setminus A(V)$ the boundary. Finally, we fix a compactification $X$ of $V$ with snc boundary $D$ such that the quasi-Albanese map $a_V\colon V\to A(V)$ extends to a morphism $g\colon X \to Z$. 
\end{set}

We exploit the previous results to gain more  information on the pair $(X,D)$.
\begin{lm}\label{lem: B}
 There is a blow down  morphism $\rho\colon X\to T$   with exceptional divisor $\sum_{j=1}^n E_j$ (where the $E_j$ are $-1$-curves),  such that one of the following holds:
\begin{enumerate}
\item[(a)] $p_g(X)=0$, $h^0(D,\omega_D)=1$,   $X$ is a rational surface and there is a 2-connected divisor $B\preceq D$ such that $K_X+B\sim \sum_{j=1}^n E_j$ and $B$  is disjoint from $\sum_{j=1}^n E_j$;
\item[(b)]  $p_g(X)=1$, $h^0(D,\omega_D)=0$, and  $T$ is a K3 surface.
\end{enumerate}
In either case there is a  divisor $B\succeq 0$  such that $K_X+B\sim \sum_{j=1}^n E_j$. 
\end{lm}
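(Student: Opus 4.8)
The plan is to extract the two alternatives directly from the identity of Lemma~\ref{lem: omegaD}(ii). Since $q(V)=q(X)=0$, the source $H^1(X,\OO_X)$ of the restriction map in that statement is zero, so the integer $d$ there vanishes and
\[
1=\ol P_1(V)=h^0(X,K_X+D)=h^0(D,\omega_D)+p_g(X).
\]
As both summands are non-negative, either $h^0(D,\omega_D)=1$ and $p_g(X)=0$, or $h^0(D,\omega_D)=0$ and $p_g(X)=1$; I claim these yield alternatives (a) and (b) respectively. In either case, $h^0(X,K_X+D)=1$ provides a (unique) effective divisor $G\sim K_X+D$, and $h^0(X,3(K_X+D))=\ol P_3(V)=1$ will be the key quantitative input.

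First I would treat the case $h^0(D,\omega_D)=1$, $p_g(X)=0$. Decomposing $D$ into its connected components $D^{(k)}$, each reduced and connected (hence $1$-connected, with $p_a(D^{(k)})=h^0(\omega_{D^{(k)}})\ge 0$), the equality $h^0(D,\omega_D)=\sum_k p_a(D^{(k)})=1$ singles out exactly one component $A\preceq D$ with $p_a(A)=1$. Applying Lemma~\ref{A} to the $1$-connected divisor $A$ produces a $2$-connected $B\preceq A\preceq D$ with $p_a(B)=1$, and Lemma~\ref{sumE} (valid since $q(X)=0$) then gives two options for $B$. Its first option $h^0(X,2K_X+2B)\ge 2$ is ruled out by the effective relation $3(K_X+D)-(2K_X+2B)\sim G+2(D-B)\succeq 0$, which forces $h^0(X,2K_X+2B)\le h^0(X,3(K_X+D))=1$. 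Hence the second option of Lemma~\ref{sumE} holds, and this is exactly conclusion (a): $X$ is rational, and there is a blow-down $\rho\colon X\to T$ with exceptional divisor $\sum_{j}E_j$ such that $K_X+B\sim\sum_{j}E_j$ and $B$ is disjoint from $\sum_{j}E_j$.

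Next I would treat the case $p_g(X)=1$, $h^0(D,\omega_D)=0$, where the aim is to realize the minimal model of $X$ as a K3 surface. From $P_1(X)=p_g(X)=1$ we get $\kappa(X)\ge 0$, while $3D\succeq 0$ gives $P_3(X)=h^0(X,3K_X)\le h^0(X,3(K_X+D))=\ol P_3(V)=1$; note $P_3$ is a birational invariant. I would then exclude $\kappa(X)\ge 1$: if $X$ were of general type its minimal model would satisfy $P_3=\chi(\OO_X)+3K^2=2+3K^2\ge 5$, and if $\kappa(X)=1$ the minimal elliptic fibration lies over $\mathbb P^1$ (because $q(X)=0$) and the canonical bundle formula makes every multiple fibre contribute enough to force $P_3\ge 2$; both contradict $P_3(X)\le 1$. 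Thus $\kappa(X)=0$, and the Enriques--Kodaira classification together with $p_g(X)=1$, $q(X)=0$ forces the minimal model $T$ to be a K3 surface. Taking $\rho\colon X\to T$ with exceptional divisor $\sum_{j}E_j$, we have $K_T\sim 0$, hence $K_X\sim\rho^*K_T+\sum_{j}E_j=\sum_{j}E_j$; with $B=0$ this gives conclusion (b) and, as in case (a), the final assertion $K_X+B\sim\sum_{j}E_j$ with $B\succeq 0$.

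I expect the main obstacle to be the second case, specifically the exclusion of $\kappa(X)=1$: with only $\ol P_3(V)=1$ in hand one cannot merely appeal to the eventual growth of plurigenera, and the contradiction has to be squeezed out of the canonical bundle formula for a minimal elliptic surface over $\mathbb P^1$ with $\chi(\OO_X)=2$, checking that even a single multiple fibre already pushes $P_3$ up to at least $2$. The first case is by comparison routine, being a reduction that feeds the $1$-connected part of $D$ into the previously established Lemmas~\ref{A} and~\ref{sumE}.
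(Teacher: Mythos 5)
Your proof is correct, and for case (a) it is essentially the paper's argument: Lemma \ref{lem: omegaD} with $q(X)=0$ gives $1=h^0(D,\omega_D)+p_g(X)$, the component $A$ with $p_a(A)=1$ is fed into Lemma \ref{A} and then Lemma \ref{sumE}, whose first alternative is excluded by a plurigenus bound. The only divergences are local and both valid. First, to kill option (a) of Lemma \ref{sumE} you bound $h^0(X,2K_X+2B)$ by $\ol P_3(V)$ via $3(K_X+D)-(2K_X+2B)\sim (K_X+D)+2(D-B)\succeq 0$, whereas the paper invokes $\ol P_2(V)=1$ directly (which it implicitly extracts from $\ol P_1=\ol P_3=1$, since effectivity of the divisor in $|K_X+D|$ forces $1=\ol P_1\le \ol P_2\le\ol P_3=1$); your route avoids even mentioning $\ol P_2$. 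Second, in case (b) the skeleton is the same (exclude general type, exclude $\kappa=1$, conclude K3 by classification from $p_g=1$, $q=0$), but your exclusion of the properly elliptic case is genuinely different: you use the canonical bundle formula for the relatively minimal elliptic fibration over $\mathbb P^1$ with $\chi(\OO_T)=2$, noting $K_T\sim\sum(m_i-1)F_i$, that $\kappa=1$ forces a multiple fibre, and that any $m_i\ge 2$ gives $\lfloor 3(m_i-1)/m_i\rfloor\ge 1$, hence $P_3\ge 2$; the paper instead argues more formally that $h^2(T,2K_T)=h^0(T,-K_T)=0$ (as $p_g(T)=1$ and $K_T\not\sim 0$) and applies Riemann--Roch with $K_T^2=0$ to get $P_2(T)\ge\chi(\OO_T)=2$. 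The paper's computation is shorter and needs no structure theory of elliptic fibrations, while yours is more geometric and pinpoints exactly where the contradiction lives (the multiple fibres); both correctly reach $T$ a K3 surface and the final identity $K_X\sim\sum_j E_j$ with $B=0$.
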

\begin{proof}
The assumption  $\ol P_1(V)=1$   implies that either $p_g(X)=0$ or $ p_g(X)=1$; in either case the value of $h^0(D,\omega_D)$ can be computed by Lemma \ref{lem: omegaD}.

Assume first $p_g(X)=0$. Then, by Lemma \ref{lem: omegaD},  $q(X)=0$ implies that   $h^0(D,\omega_D)=1$. So there is a connected component $A$ of $D$ such that $p_a(A)=h^0(A,\omega_A)=1$ and by 
 Lemma \ref{A} there  is  a 2-connected divisor $B\preceq A$ with $p_a(B)=1$.  By Lemma  \ref{sumE}   
and the hypothesis $\ol P_2(V)=1$, $X$ is rational and there is a blow down  morphism $\rho\colon X\to T$   as in (a).

Assume now $p_g(X)=1$. In this case  $X$ has non-negative Kodaira dimension and 
 we take  $\rho\colon X\to T$ to be the morphism to the minimal model. 
If $X$ is of general type,  $K_T^2>0$ and  $h^0(X, 2K_X)=h^0(T, 2K_T)=K_T^2+\chi(\mathcal O_X)\geq 2$,  contradicting $\ol P_2(V)=1$.


 If $X$ is properly elliptic then, because $p_g(T)=1$ and $K_T\neq \mathcal O_T$, $h^0(T, -K_T)=0$, and so by duality $h^2(T, 2K_T)=0$. Since $K_T^2=0$, by the Riemann-Roch theorem  we obtain $h^0(T, 2K_T)=h^1(T,2K_T)+\chi(\mathcal O_X)\geq 2$.

So $\kappa(X)=0$ and by  the classification of projective surfaces we conclude that $T$ is a $K3$-surface. We have $K_X=\sum_{j=1}^n E_j$, so in this case the last claim holds with $B=0$.
\end{proof}

\begin{lm}\label{ho}    One has:
\begin{enumerate}
\item
$h^0(X, 2K_X+D)=h^0(X, 3K_X+2D)=p_g(X)$;
\item  if $D_i$ is the unique divisor in $|i(K_X+D)|$, $i=1,2$, then  $h^0(D_i, \omega_{D_i})=0$.
\end{enumerate}
  \end{lm}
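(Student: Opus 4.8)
The plan is to extract both statements from upper bounds on $h^0(X,2K_X+D)$ and $h^0(X,3K_X+2D)$, since the reverse inequalities and the equivalence of (i) and (ii) come essentially for free. First I would record two elementary reductions. The hypotheses $\ol P_1(V)=\ol P_3(V)=1$ force $\ol P_2(V)=1$: if $s$ spanned $H^0(X,K_X+D)$ and some $t\in H^0(X,2(K_X+D))$ were independent of $s^2$, then $st$ and $s^3$ would be independent in $H^0(X,3(K_X+D))$ (the section ring of $K_X+D$ being a domain), contradicting $\ol P_3(V)=1$. Hence the unique effective divisors $D_1\in |K_X+D|$ and $D_2\in |2(K_X+D)|$ satisfy $D_2=2D_1$. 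Next, writing $2K_X+D=K_X+D_1$ and $3K_X+2D=K_X+D_2$ and applying Lemma \ref{lem: omegaD}(ii), the hypothesis $q(X)=q(V)=0$ makes $H^1(X,\mathcal O_X)=0$, so the correction term there vanishes and $h^0(X,K_X+D_i)=h^0(D_i,\omega_{D_i})+p_g(X)$ for $i=1,2$. This already shows that part (i) is equivalent to part (ii), so it suffices to prove the vanishing, equivalently that $h^0(X,2K_X+D)=h^0(X,3K_X+2D)=p_g(X)$.

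For the upper bound, note $2K_X+D\sim 2D_1-D\preceq 2D_1$ and $3K_X+2D\sim 3D_1-D\preceq 3D_1$, so multiplication by the section defining $D$ gives injections $H^0(X,2K_X+D)\hookrightarrow H^0(X,2D_1)$ and $H^0(X,3K_X+2D)\hookrightarrow H^0(X,3D_1)$; thus both groups have dimension at most $\ol P_2(V)=\ol P_3(V)=1$. In case (b) of Lemma \ref{lem: B} we have $p_g(X)=1$, so these dimensions are squeezed between $p_g(X)=1$ and $1$, forcing equality and $h^0(D_i,\omega_{D_i})=0$.

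The main work is case (a), where $p_g(X)=0$ and I must rule out effectivity of $2K_X+D$ and $3K_X+2D$. Since $|2D_1|$ and $|3D_1|$ have unique members, an effective divisor linearly equivalent to $2D_1-D$ (resp. $3D_1-D$) must equal $2D_1-D$ (resp. $3D_1-D$) as an actual divisor, so effectivity amounts to $2D_1\succeq D$ (resp. $3D_1\succeq D$). Using the divisor $B$ from Lemma \ref{lem: B}(a), which is $2$-connected, satisfies $K_X+B\sim\sum_j E_j$, obeys $B\preceq D$ and is disjoint from $\sum_j E_j$, I would write $D=B+C$ with $C\succeq 0$ and identify $D_1=\sum_j E_j+C$ (this is an effective divisor in the class $K_X+D\sim \sum_j E_j - B + D$, hence is the unique member $D_1$). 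Then $2D_1-D=2\sum_j E_j+C-B$ and $3D_1-D=3\sum_j E_j+2C-B$. Since $\operatorname{Supp}(B)$ is disjoint both from $\sum_j E_j$ and from $C$ (the latter because $B$ and $C=D-B$ are complementary reduced subdivisors of the snc divisor $D$), these two divisors carry the strictly negative coefficient $-B$ over the nonempty set $\operatorname{Supp}(B)$, so neither is effective. Therefore $h^0(X,2K_X+D)=h^0(X,3K_X+2D)=0=p_g(X)$, and $h^0(D_i,\omega_{D_i})=0$.

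I expect this last paragraph to be the crux. The entire difficulty is concentrated in case (a): the geometric input is precisely that $B$ appears with an uncancellable negative coefficient in $2D_1-D$ and $3D_1-D$ because it is disjoint from every other divisor occurring there, which is exactly the configuration guaranteed by Lemma \ref{lem: B}(a). Once this disjointness is set up, the conclusion is immediate, whereas case (b) is handled by the elementary squeeze between $p_g(X)$ and the plurigenus bound.
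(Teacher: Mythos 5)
Your proof is correct, but in the crucial case $p_g(X)=0$ it runs on a genuinely different mechanism from the paper's. The paper never identifies the divisors $D_i$ explicitly: for $p_g(X)=0$ it uses the restriction sequence $0\to H^0(X,mK_X+(m-1)D)\to H^0(X,m(K_X+D))\to H^0(D,\omega_D^{\otimes m})$, observes that for $m=1$ the second map is an isomorphism (since $p_g(X)=q(X)=0$), so the generator $s$ of $H^0(X,K_X+D)$ restricts to a nonzero section of $\omega_D$; because $D$ is \emph{reduced}, $s^m|_D\neq 0$ for all $m$, hence the restriction map is nonzero for $m=2,3$, and then $\ol P_2(V)=\ol P_3(V)=1$ force the kernels $H^0(X,2K_X+D)$ and $H^0(X,3K_X+2D)$ to vanish. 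You instead import the structural output of Lemma \ref{lem: B}(a) to pin down $D_1=\sum_j E_j+C$ (where $D=B+C$) and read off the coefficient $-1$ along the components of $B$ in $2D_1-D$ and $3D_1-D$. Both routes are valid, and there is no circularity in yours since Lemma \ref{lem: B} precedes Lemma \ref{ho} and uses only $\ol P_1(V)=\ol P_2(V)=1$ (with $\ol P_2(V)=1$ following from $\ol P_1(V)=\ol P_3(V)=1$, as you check). The paper's argument is lighter, needing only $q(X)=0$, $p_g(X)\le 1$ and reducedness of $D$, and it treats $m=2$ and $m=3$ uniformly; yours buys the concrete identification of the unique divisor $D_1$, at the cost of leaning on the full strength of Lemma \ref{lem: B}(a). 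The $p_g(X)=1$ case (the squeeze $p_g(X)\le h^0\le 1$) is essentially identical in both.

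One wording slip to fix: $\operatorname{Supp}(B)$ need \emph{not} be disjoint from $C$ --- complementary reduced subdivisors of an snc divisor can certainly meet; only their \emph{components} are distinct. Your coefficient count, however, only requires that no component of $B$ occurs in $\sum_j E_j$ (true by the disjointness in Lemma \ref{lem: B}(a)) or in $C$ (true because $D=B+C$ is reduced), so the conclusion that $2D_1-D$ and $3D_1-D$ are non-effective stands as written.
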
 
\begin{proof}
(i) Since $D\succ  0$ and   $\ol P_i(V)=1$ for $i=1,2,3$ by assumption, we have $p_g(X)\le h^0(X, 2K_X+D)\le h^0(X, 3K_X+2D)\le 1$.  So  if   
$p_g(X)=1$ the assertion is trivial. 

For  $p_g(X)=0$  and $m\ge 1$ consider the exact sequence: 
\begin{equation}\label{eq: ho}
0\to H^0(mK_X+(m-1)D)\to H^0(X, m(K_X+D))\to H^0 (D, \omega_D^{\otimes m})
\end{equation}
 The second map in \eqref{eq: ho} is an isomorphism for $m=1$, since $q(X)=0$, so it is nonzero for all $m\ge 1$, since $D$ is a reduced divisor. 
Then $\ol P_2(V)=1$  and  $\ol P_3(V)=1$ imply  that $h^0(X, 2K_X+D)=h^0(X, 3K_X+2D)=0$.

(ii)  is an immediate consequence of  $q(X)=0$ and  of (i)  (see Lemma \ref{lem: omegaD}).

 \end{proof}

We now turn to the study of the quasi-Albanese map.
\begin{lm} \label{lem: 3fibre}
If the image of $a_V$  is a curve, then it  is isomorphic to $\mathbb P^1\setminus \{0,1,\infty\}$ and the general fiber of $a_V$ is connected.
\end{lm}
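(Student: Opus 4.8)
The plan is to analyze the situation where $a_V$ factors through a curve $C$, recalling that by Setting \ref{setup} we have $\ol q(V)=2$, so the quasi-Albanese map targets $A(V)=\mathbb G_m^2$, whose compactification is $Z=\mathbb P^1\times\mathbb P^1$. First I would argue that $C$ must itself be a quasi-abelian curve. Indeed, since the image of $a_V$ generates $A(V)$ as an algebraic group and $A(V)=\mathbb G_m^2$ is affine with trivial compact part, the image curve $C$ cannot be complete; moreover $\ol\kappa(C)=0$ would force $C$ to be quasi-abelian by Proposition \ref{prop: quasiab}(iii). So the key point is to pin down the log-Kodaira dimension of the image: because $\ol q(V)=2$ and the image is only a curve, the two independent logarithmic $1$-forms on $V$ must pull back from $C$, which forces $\ol q(C)\geq 2$ for the normalization; but a curve has log-irregularity at most... — here I need to be careful, since $\ol q$ of a curve equals its log-genus. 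The cleaner route is to use the universal property: $a_V$ factors through $a_C\colon C\to A(C)$, and by Proposition \ref{prop: log-abel}, if $\ol P_1(C)>0$ then $a_C$ is an embedding, with $A(C)$ a $1$-dimensional quasi-abelian variety.

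The heart of the matter is then the identification $C\cong\mathbb P^1\setminus\{0,1,\infty\}$. Since the image generates the $2$-dimensional $A(V)$ but $C$ sits inside a $1$-dimensional quasi-abelian $A(C)$, I would compare dimensions: the two projections $A(V)=\mathbb G_m^2\to\mathbb G_m$ restrict to non-constant maps on $C$, and these must be multiplicatively independent for $C$ to generate. A complete curve $C$ would have $a_C(C)$ contained in the compact part of $A(C)$; but the compact part of $A(C)$ is an abelian variety receiving a dominant map from the compact part $A(X)$ of $A(V)$, which is trivial since $\ol q(V)-q(V)=2-0=2$ means $A(V)$ is a pure torus $\mathbb G_m^2$ with trivial compact part. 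Hence $A(C)$ has trivial compact part, so $A(C)=\mathbb G_m$ and $C\hookrightarrow\mathbb G_m$; then $\ol C=\mathbb P^1$ and $C$ is $\mathbb P^1$ minus finitely many points. The projection to each factor $\mathbb G_m$ being non-constant forces at least the two points $\{0,\infty\}$ removed by the first coordinate and at least one further point from the second, genuinely independent coordinate, giving at least three punctures; on the other hand $\ol P_1(V)=1$ should bound the number of punctures from above. Concretely, $C\subseteq\mathbb P^1$ is a $\mathbb G_m$-curve embedded in $\mathbb G_m^2$ generating it, so the two coordinate functions $x,y$ restricted to $C$ are multiplicatively independent units on $C$, which is only possible if $C$ has at least three boundary points; and the plurigenus constraint forces exactly three, yielding $C\cong\mathbb P^1\setminus\{0,1,\infty\}$.

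For the connectedness of the general fiber, I would pass to the Stein factorization of the extended morphism $g\colon X\to Z$ restricted over $C$. Write $a_V=s\circ f$ where $f\colon V\to C'$ has connected general fiber onto a smooth curve $C'$ and $s\colon C'\to C$ is finite; the claim is that $s$ has degree one. Here the argument would use that $C'$ is itself a quasi-abelian curve (by Proposition \ref{prop: quasiab} applied after base change, or by the universal property factoring $a_V$ through $A(C')\to A(C)$), together with the fact that $s$ must be compatible with the group-theoretic structure. Since $C'\to C$ is an isogeny-type finite map of $1$-dimensional quasi-abelian varieties and the image of $a_V$ generates $A(V)$, I expect that the log-plurigenus bound $\ol P_3(V)=1$ obstructs any nontrivial covering: a degree $\geq 2$ map would produce additional sections of some multiple of $K_X+D$ via pullback of pluricanonical forms on the fibration, contradicting $\ol P_3(V)=1$.

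The step I expect to be the main obstacle is this last connectedness claim, that $s$ has degree one. Bounding the plurigenera from the fibration structure is delicate: I would need the logarithmic ramification formula \eqref{eq: logram} for $f$ to the curve $C'$, controlling how $K_X+D$ restricts to the general fiber $F$ and to the base, and then Lemma \ref{ho} — which pins down $h^0(X,2K_X+D)=h^0(X,3K_X+2D)=p_g(X)$ — to forbid the extra pluricanonical sections that a nontrivial Stein factor would generate. The identification $C\cong\mathbb P^1\setminus\{0,1,\infty\}$ is comparatively routine once the triviality of the compact part of $A(C)$ is established, so I regard the finite-degree-one argument as the genuine technical core of this lemma.
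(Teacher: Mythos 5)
There is a genuine gap, and it starts from a false premise: you assert that the quasi-Albanese of the image curve is one-dimensional (``$C$ sits inside a $1$-dimensional quasi-abelian $A(C)$'', later ``$A(C)=\mathbb G_m$''). In fact $\dim A(C)=\ol q(C)$, the logarithmic genus of $C$, not $1$; for $C\cong\mathbb P^1\setminus\{0,1,\infty\}$ one has $A(C)\cong\mathbb G_m^2$ with $a_C$ the embedding $t\mapsto(t,1-t)$ up to coordinates. Your deduction is then internally inconsistent: since $a_V$ factors through $a_C$ by the universal property, an embedding $C\hookrightarrow A(C)=\mathbb G_m$ would force the image of $a_V$ to lie in a one-dimensional subgroup (up to translation) of $\mathbb G_m^2$, contradicting the fact that $a_V(V)$ generates $A(V)$ --- so your intermediate conclusions would yield a contradiction rather than pin down $C$. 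The paper runs the universal property the other way: taking the Stein factorization $X\to\widetilde\Gamma\to\Gamma$ of $g$ and letting $\Gamma_0\subset\widetilde\Gamma$ be the (smooth) image of $V$, the two factorizations $V\to\Gamma_0\to A(\Gamma_0)$ and $\Gamma_0\to C\subset A(V)$ force $A(V)\cong A(\Gamma_0)$; Proposition \ref{prop: log-abel} then makes $a_{\Gamma_0}$ an embedding, so the image of $a_V$ is \emph{isomorphic} to $\Gamma_0$. This single observation delivers everything at once: the general fiber of $a_V$ is connected (the step you flagged as the ``main obstacle'' and never carried out), the image is smooth (a point you gloss over --- Proposition \ref{prop: log-abel} needs a smooth curve, and a priori $a_V(V)$ could be singular), and $\ol q(\Gamma_0)=\dim A(\Gamma_0)=2$, which together with rationality (from $q(X)=0$) gives exactly three punctures.

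Two further specific defects. First, your upper bound on the number of punctures rests on the unproved assertion that ``$\ol P_1(V)=1$ should bound the number of punctures from above''; it is not clear how to extract that from plurigenera directly, whereas the cheap correct bound is $\ol q(C)\le\ol q(V)=2$, since logarithmic $1$-forms pull back injectively under the dominant map $V\to C$ (your lower bound via multiplicative independence of the two coordinate units is correct but becomes superfluous). Second, your proposed proof of connectedness --- producing extra sections of multiples of $K_X+D$ from a degree $\ge 2$ Stein factor and contradicting $\ol P_3(V)=1$ --- is only a program, and a more delicate one than needed: once $A(V)\cong A(C')$ for the Stein factor $C'$, the finite map $C'\to C$ has degree $1$ immediately, because $a_V$ equals an isomorphism composed with $a_{C'}\circ f$ and $a_{C'}$ is injective. (Alternatively, a log Riemann--Hurwitz count plus $q(X)=0$ shows a degree $m\ge2$ Stein factor would have $\ol q(C')\ge 1+m>2=\ol q(V)$, which is impossible --- but the paper never needs this.)
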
 
\begin{proof}
Let $g$ as in Setting \ref{setup}.
By assumption, the  image $\Gamma$ of $g$ is a curve. Let $X\to \widetilde \Gamma\to \Gamma$ be the Stein factorization of $g$ and let $\Gamma_0\subset \widetilde\Gamma$ be the image of $V$. Then by the universal property of the quasi-Albanese map, $A(V)$ is isomorphic to $A(\Gamma_0)$. In addition, the image of $a_V$ is isomorphic to $\Gamma_0$ by Proposition \ref{prop: log-abel}.
It follows that $g$ has connected fibers, hence the general fiber of $a_V$ is also connected.  Finally $\Gamma_0$ is rational, since $q(X)=0$, and has logarithmic genus 2, hence it is  isomorphic to $\mathbb P^1\setminus \{0,1,\infty\}$.\end{proof}

We get immediately:
\begin{cor}
If the image of $a_V$ is a curve, then there is a fibration $f\colon X\to\mathbb P^1$ such that $D$ contains  the supports $F_1^s$, $F_2^s$ and $F_3^s$ of three distinct  fibers  $F_1$, $F_2$ and $F_3$ of $f$.
\end{cor}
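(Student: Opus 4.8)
The plan is to take for $f$ the fibration coming from the Stein factorization of $g$ that already appears in the proof of Lemma \ref{lem: 3fibre}, and then to observe that the three missing points of the image curve pull back into the boundary. First I would recall the notation of that proof: write the Stein factorization of $g\colon X\to Z$ as $X\xrightarrow{f}\widetilde\Gamma\xrightarrow{\nu}\Gamma$, where $f$ has connected fibres, $\nu$ is finite, and $\Gamma=g(X)\subset Z$ is the image curve, and set $\Gamma_0:=f(V)\subseteq\widetilde\Gamma$. Since $\widetilde\Gamma$ is a normal projective curve it is smooth, and as $\Gamma_0$ is dense in it, $\widetilde\Gamma$ is a smooth projective model of $\Gamma_0$. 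By Lemma \ref{lem: 3fibre} we have $\Gamma_0\cong\mathbb P^1\setminus\{0,1,\infty\}$, whose smooth projective model is $\mathbb P^1$; by uniqueness of the smooth projective model of a curve, $\widetilde\Gamma\cong\mathbb P^1$, and $\Gamma_0$ is open in $\widetilde\Gamma$ with $\widetilde\Gamma\setminus\Gamma_0=\{p_1,p_2,p_3\}$ consisting of exactly three points. Thus $f\colon X\to\widetilde\Gamma\cong\mathbb P^1$ is the required fibration.

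Next I would check that the three fibres $F_i:=f^{-1}(p_i)$ have support in $D$. This is formal: by construction $f(V)=\Gamma_0$, so $V\subseteq f^{-1}(\Gamma_0)$, and therefore, using $f^{-1}(\widetilde\Gamma\setminus\Gamma_0)=X\setminus f^{-1}(\Gamma_0)$,
$$
f^{-1}(\{p_1,p_2,p_3\})=X\setminus f^{-1}(\Gamma_0)\subseteq X\setminus V=D.
$$
In particular each support $F_i^s$ is contained in $D$, and the $F_i$ are distinct because the $p_i$ are. This is exactly the assertion of the corollary.

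The only point genuinely requiring care is the identification $\widetilde\Gamma\setminus\Gamma_0=\{p_1,p_2,p_3\}$, that is, that $\Gamma_0$ is open in $\widetilde\Gamma$ with complement precisely three points. This I expect to be the main (mild) obstacle, and it follows from the abstract isomorphism $\Gamma_0\cong\mathbb P^1\setminus\{0,1,\infty\}$ of Lemma \ref{lem: 3fibre} together with the uniqueness of the smooth projective model of a curve; once this is in place the containment of the three fibres in $D$ is an immediate consequence of the Stein factorization already constructed in the proof of Lemma \ref{lem: 3fibre}.
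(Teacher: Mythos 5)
Your proof is correct and is precisely the argument the paper leaves implicit: the corollary is stated as an immediate consequence of Lemma \ref{lem: 3fibre}, whose proof already constructs the Stein factorization $X\to\widetilde\Gamma\to\Gamma$, identifies $\Gamma_0\cong\mathbb P^1\setminus\{0,1,\infty\}$, and hence (by uniqueness of the smooth projective model, with openness of $\Gamma_0$ in $\widetilde\Gamma$ following from constructibility of the image) yields $\widetilde\Gamma\cong\mathbb P^1$ with exactly three boundary points whose fibers lie set-theoretically in $D=X\setminus V$. Your write-up matches this intended route, only making explicit the formal steps the authors omit.
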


\begin{lm}\label{ei}  Let $E_j$, $j=1,\dots n$, be  the $-1$-curves contracted by   the blow down morphism  $\rho \colon X\to T$ of Lemma \ref{lem: B}. 

Then  $ E_jF_i^s\geq 0$ for all $j,i$.
\end{lm}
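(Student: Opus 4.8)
The plan is to reduce the statement to a single intersection computation inside the exceptional configuration that $\rho$ contracts over the image point of $E_j$; the only genuine difficulty is caused by components of the fibre $F_i$ that happen to be $\rho$-exceptional. Before doing the computation I would record two facts about $F_i^s$. First, by Lemma~\ref{lem: 3fibre} the general fibre of $a_V$ is connected, hence so is $F_i$ and therefore so is its support $F_i^s$. Second, $F_i^s$ cannot be supported entirely on the exceptional locus of $\rho$: the classes of the $\rho$-exceptional curves span a negative definite sublattice of $H^2(X,\mathbb Z)$, whereas $F_i$ is a non-zero fibre, so $F_i^2=0$; consequently $\rho_*F_i^s\neq 0$. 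If $E_j$ and $F_i^s$ had no common component the inequality $E_jF_i^s\ge 0$ would be automatic, so the whole issue is to control the common, necessarily $\rho$-exceptional, components.

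The key step is a local computation for a fixed $j$. I would factor $\rho$ as $X\xrightarrow{\pi}B\to T$, where $B$ is the intermediate surface obtained immediately after the blow-up producing $E_j$, $e\subset B$ is the corresponding exceptional curve, and $\pi$ carries out the remaining blow-ups; then $E_j=\pi^*e$ and $\operatorname{Supp}E_j=\pi^{-1}(e)$. By the projection formula, $E_j\Gamma=e\cdot\pi_*\Gamma$ for every curve $\Gamma$ not contracted by $\pi$, so $E_j\Gamma=0$ when $\Gamma$ is $\pi$-exceptional and $E_j\Gamma=e^2=-1$ for the strict transform of $e$. In particular every irreducible component of $E_j$ meets $E_j$ non-positively, while every component of $F_i^s$ outside $\operatorname{Supp}E_j$ meets $E_j$ non-negatively.

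To finish I would split $F_i^s=G'+G''$, where $G''$ collects the components lying in $\operatorname{Supp}E_j$ and $G'$ is the rest. Since among the components of $G''$ only the strict transform of $e$ can contribute, and it contributes $-1$, one gets $E_jG''\ge -1$. If $G''=0$ then $E_jF_i^s=e\cdot\pi_*F_i^s\ge 0$. If $G''\neq 0$ then $G'\neq 0$ (because $\rho_*F_i^s\neq 0$), and connectedness of $F_i^s$ forces $G'$ to meet $\operatorname{Supp}E_j$; hence $\pi_*G'$ meets $e$ without containing it, so $E_jG'=e\cdot\pi_*G'\ge 1$ and $E_jF_i^s\ge 1-1=0$. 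The hard part is exactly this dichotomy — ruling out that an exceptional fibre component drags the intersection below zero — which is precisely why both the connectedness of the fibre and the fact that it is not entirely contracted by $\rho$ are indispensable.
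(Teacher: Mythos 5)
Your proof is correct, and it reaches the inequality by a genuinely different mechanism than the paper. The paper argues by contradiction: assuming $E_jF_i^s<0$, it writes $E_j=A+C_1$ and $F_i^s=A+C_2$ with $A\succ 0$ the common part, notes $C_2\succ 0$ (otherwise contracting $E_j$ would contract the whole fiber --- the same obstruction you isolate as $\rho_*F_i^s\neq 0$ via negative definiteness of the exceptional lattice against $F_i^2=0$), and then combines three numerical inputs: $E_jA=A^2+AC_1\geq -1$, quoted from \cite[Prop.~3.2]{ADE} as a general property of subdivisors of generalized $(-1)$-curves; $AC_2\geq 1$ from $1$-connectedness of the reduced connected fiber support; and $C_1C_2\geq 0$. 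Your proof replaces the quoted lemma by the explicit factorization $\rho=(B\to T)\circ\pi$ with $E_j=\pi^*e$ and the projection formula, from which the key bounds fall out directly: each component of $\operatorname{Supp}E_j=\pi^{-1}(e)$ meets $E_j$ in $0$ or $-1$, so $E_jG''\geq -1$ because $G''$ is reduced and at most one of its components is the strict transform of $e$; and connectedness of $F_i^s$ gives $e\cdot\pi_*G'\geq 1$ when $G''\neq 0$. The overall skeleton --- a lower bound of $-1$ on the common part, a gain of $+1$ from connectedness, and non-contractedness of the fiber to guarantee the residual part is nonzero --- thus runs parallel to the paper's, but your version is self-contained, proving the needed instance of the $(-1)$-curve inequality rather than citing it, at the cost of fixing an order of the blow-ups; the paper's decomposition by common parts treats all $E_j$ uniformly without choosing a factorization. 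One step you should make explicit: in the case $G''\neq 0$, the component of $G'$ meeting $\operatorname{Supp}E_j$ cannot itself be $\pi$-exceptional, since a $\pi$-exceptional curve of $G'$ lies over a point of $B$ off $e$ and is therefore disjoint from $\pi^{-1}(e)$; this is what licenses the inference from ``$G'$ meets $\operatorname{Supp}E_j$'' to ``$\pi_*G'$ meets $e$ without containing it,'' and with it your argument is complete.
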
 
\begin{proof}
 Assume by contradiction that $E_jF_1^s< 0$. Then $E_j$ and $F_1^s$ have common components.    Write  $E_j=A+C_1$ and $F_1^s=A+C_2$ where $A\succ 0$ and $C_1,C_2
\succeq 0$ have no common components.  Note that  $C_2\succ 0$ otherwise blowing down $E_j$  we would contract the whole fiber.  Now $E_jF_1^s< 0$ yields $A^2+AC_1+AC_2+C_1C_2<0$. But, because $E_j$ is a $(-1)$-curve,  $E_jA=A^2+AC_1\geq -1$ (see \cite[Prop.~3.2]{ADE}) and  because $F_1^s$ is 1-connected $AC_2\geq 1$. Since $C_1C_2\geq 0$ we have a contradiction. 
 \end{proof}

 \begin{lm}\label{components} Let $E_j$, $j=1,\dots n$, be  the $-1$-curves contracted by   the blow down morphism  $\rho \colon X\to T$ of Lemma \ref{lem: B}. 
 
 Then   $\sum_{j=1}^n E_j+D$ has at most one component transversal to $f$.\end{lm}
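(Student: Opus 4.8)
The plan is to argue by contradiction. Suppose $N:=\sum_{j=1}^{n}E_j+D$ has two distinct components $C_1,C_2$ transversal to $f$, where $f\colon X\to\mathbb P^1$ is the fibration furnished by the Corollary to Lemma \ref{lem: 3fibre}, so that $D$ contains the pairwise disjoint, connected supports $F_1^s,F_2^s,F_3^s$ of three fibres. I would derive a contradiction by squeezing the number $h^0(W,\omega_W)$, for a well-chosen subcurve $W\preceq N$, between an upper bound coming from the smallness of the log-plurigenera and a lower bound coming from the fact that two transversal components meeting three disjoint fibres create a loop.

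For the upper bound I would invoke Lemma \ref{lem: B}, which gives $\sum_j E_j\sim K_X+B$ with $0\preceq B\preceq D$. Hence $K_X+N\sim 2K_X+B+D$ and, since $D-B\succeq 0$, one gets $h^0(X,K_X+N)\le h^0(X,2(K_X+D))=\ol P_2(V)=1$. Feeding this into Lemma \ref{lem: omegaD}(ii) and using $q(X)=0$ yields $h^0(N,\omega_N)=h^0(X,K_X+N)-p_g(X)-d\le 1-p_g(X)\le 1$, where $d\ge 0$. By the monotonicity in Lemma \ref{pa sum}(ii), it follows that $h^0(W,\omega_W)\le h^0(N,\omega_N)\le 1$ for every effective $W\preceq N$.

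For the lower bound I would take $W:=F_1^s+F_2^s+F_3^s+C_1+C_2$, which satisfies $W\preceq N$ because the $F_i^s$ are components of $D$ and the $C_a$ are components of $N$. Each $F_i^s$ is connected, so $p_a(F_i^s)\ge 0$, and the three fibres are disjoint; moreover each transversal curve $C_a$ dominates $\mathbb P^1$, hence meets every fibre, so that $C_a\cdot F_i^s\ge 1$ for all $i$. Assembling $W$ one summand at a time and applying the adjunction formula of Lemma \ref{pa sum}(i) at each step then gives $p_a(W)\ge 2$: the three disjoint fibres contribute $-2$, while each of $C_1,C_2$ meets the part already assembled in at least three points and so contributes at least $+2$. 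Since $h^0(W,\omega_W)\ge p_a(W)\ge 2$, this contradicts the upper bound, forcing at most one component of $N$ to be transversal to $f$.

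The step requiring the most care — and the one I would nail down first — is the bookkeeping that makes the lower bound legitimate: the fibres $F_i$ and the exceptional curves $E_j$ may be non-reduced, so I would run the adjunction computation using only the intersection numbers $C_a\cdot F_i^s\ge 1$, $F_i^sF_j^s=0$ $(i\ne j)$ and $C_1C_2\ge 0$, which hold irrespective of reducedness, while keeping $W$ itself reduced so that $W\preceq N$ and Lemma \ref{pa sum}(ii) apply verbatim. (The inequalities $E_jF_i^s\ge 0$ of Lemma \ref{ei} offer an alternative way to control a transversal $E_j$, but are not needed in this formulation.) A secondary check is that $\ol P_2(V)=1$; this follows from $\ol P_1(V)=\ol P_3(V)=1$ together with the multiplicativity of sections of $K_X+D$.
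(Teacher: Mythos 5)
Your argument is correct and is essentially the paper's own proof: both derive a contradiction with $\ol P_2(V)=1$ by observing that two distinct transversal components $M_1,M_2$ satisfy $M_k\cdot(F_1^s+F_2^s+F_3^s)\ge 3$, so that $M_1+M_2+\sum_i F_i^s$ is a subdivisor of $\sum_j E_j+D\in |2K_X+B+D|\subseteq |2(K_X+D)|$ with arithmetic genus at least $2$, via $K_X+B\sim\sum_j E_j$ from Lemma \ref{lem: B} and Lemmas \ref{pa sum} and \ref{lem: omegaD}. The only cosmetic difference is the direction of the squeeze: the paper bounds $h^0(X,K_X+M_1+M_2+\sum_i F_i^s)\ge 2$ from below and embeds that system into $|2(K_X+D)|$, whereas you bound $h^0(N,\omega_N)\le 1$ from above and descend to $W$ by monotonicity --- the same computation run in the opposite order.
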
 
\begin{proof}
Let $ M_1,M_2$ be two distinct components of $\sum_{j=1}^n E_j+D$ tranversal to $f$. Then  because $F_i^s$ is the support of a full fiber  $M_k(\sum_{i=1}^3 F_i^s)\geq 3$ for $k=1,2$  yielding $p_a(M_1+M_2+\sum_{i=1}^3 F_i^s) \geq 2$ and so,  by Lemma \ref{lem: omegaD},  $h^0(X, K_X+M_1+M_2+\sum_{i=1}^3 F_i^s)\geq 2$.

By Lemma \ref{lem: B} there is a divisor $0\preceq B\preceq D$ such that $K_X+B\sim \sum_j E_j$.
Since  $M_1+M_2+\sum_{i=1}^3 F_i^s \leq \sum_{j=1}^n E_j+D\in |K_X+B+D|$   we obtain   $h^0(X, 2K_X+B+D)\geq 2$, which contradicts $\ol P_2(V)=1$. 

\end{proof}

\begin{cor}\label{common}  Assume $p_g(X)=0$, and let $0\prec B\preceq D$ be the divisor such that $K_X+B\sim \sum_j E_j$ (cf. Lemma \ref{lem: B}).
If $B$ has components in common  with $\sum_{i=1}^3 F_i^s$, then one of the following happens: 
\begin{enumerate}

\item $B\leq F_i^s$ for some  $i\in \{1,2,3\}$, or ;
\item $B$ has a unique component $H$ transversal to $f$  and $B-H$ is contained in, say,  $F_1^s$. Furthermore $HF_1^s=2$ and $HF_i^s=1$ for $i=2,3$.
\end{enumerate}
\end{cor}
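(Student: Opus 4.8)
The plan is to exploit the constraint from Lemma \ref{components} that, since $B\preceq D\preceq\sum_j E_j+D$, the divisor $B$ has \emph{at most one} component transversal to the fibration $f$, and then to split the argument according to whether such a transversal component occurs. Throughout I would use that $f$ has connected fibres (Lemma \ref{lem: 3fibre}), that $F_1^s,F_2^s,F_3^s\preceq D$, and that $\ol P_2(V)=1$.

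First I would dispose of the case in which $B$ has no transversal component. Then every component of $B$ is contracted by $f$, hence contained in a fibre; since $B$ is connected (being $2$-connected) and components lying in distinct fibres of $f$ are disjoint, all of $B$ lies in a single fibre $F$. By hypothesis $B$ shares a component with one of the $F_i^s$, and that component lies in $F\cap F_i$, forcing $F=F_i$; as $B$ is reduced we conclude $B\leq F_i^s$, which is case~(i).

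Next I would treat the case in which $B$ has a unique transversal component $H$. The component shared with $\sum_i F_i^s$ is vertical, so it cannot be $H$; hence $B-H\succ 0$ and $B$ is reducible. By Lemma \ref{B} the component $H$ is smooth rational with $(K_X+B)H=0$, so adjunction gives $(B-H)H=(K_X+B)H-(K_X+H)H=0-(-2)=2$. To see that $B-H$ lies in a single fibre, I would suppose it met two distinct fibres and write $B-H=B'+B''$ with $B'$ supported on one fibre and $B''$ on the others, so that $B'B''=0$; applying $2$-connectedness to the decompositions $B=(H+B')+B''$ and $B=(H+B'')+B'$ would give $HB''\geq 2$ and $HB'\geq 2$, whence $(B-H)H\geq 4$, contradicting $(B-H)H=2$. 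Thus $B-H$ lies in one fibre, which, sharing a component with some $F_i^s$, must be one of the $F_i$, say $F_1$; this gives $B-H\subseteq F_1^s$.

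Finally I would pin down the intersection numbers. Since $B-H\leq F_1^s$ we get $HF_1^s\geq (B-H)H=2$, while $HF_i^s\geq 1$ for $i=2,3$ because $H$ is horizontal, so $H(\sum_i F_i^s)\geq 4$. For the reverse bound set $\Gamma:=H+F_1^s+F_2^s+F_3^s$; these are distinct reduced components of $D$, and since $K_X+B\sim\sum_j E_j$ and $B\preceq D$ one has $h^0(X,K_X+\Gamma)\leq h^0(X,2K_X+B+D)\leq h^0(X,2(K_X+D))=\ol P_2(V)=1$. On the other hand $\Gamma$ is connected and $p_g(X)=q(X)=0$, so by Lemma \ref{lem: omegaD} $h^0(X,K_X+\Gamma)=p_a(\Gamma)$, and Lemma \ref{pa sum} gives $p_a(\Gamma)=\sum_i p_a(F_i^s)+H(\sum_i F_i^s)-3\geq H(\sum_i F_i^s)-3\geq 1$. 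Comparing the two bounds forces $H(\sum_i F_i^s)=4$, and together with $HF_1^s\geq 2$, $HF_2^s\geq 1$, $HF_3^s\geq 1$ this yields $HF_1^s=2$ and $HF_2^s=HF_3^s=1$, which is case~(ii). The main obstacle I anticipate is the middle step: extracting the exact value $(B-H)H=2$ and then using $2$-connectedness to confine $B-H$ to a single fibre, since this is precisely what rules out the a priori possibility of $B-H$ spreading across two special fibres; the concluding numerical rigidity then follows cleanly from the plurigenus bound $\ol P_2(V)=1$ applied to $\Gamma$.
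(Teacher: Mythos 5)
Your proof is correct and takes essentially the same route as the paper's: Lemma \ref{components} leaves at most one transversal component $H$ of $B$, the vertical part $B-H$ is confined to a single special fibre, and the constraint that no subdivisor of $D$ can have $p_a\geq 2$ (which you realize via $h^0(X,K_X+\Gamma)\leq \ol P_2(V)=1$, while the paper invokes $h^0(D,\omega_D)=1$ from Lemma \ref{lem: B}) pins down $HF_1^s=2$ and $HF_i^s=1$ for $i=2,3$. The only local variation is that you trap $B-H$ in one fibre by computing $(B-H)H=2$ from adjunction and ruling out a split across two fibres by $2$-connectedness of $B$, where the paper instead cites the $1$-connectedness (hence connectedness) of $B-H$ from Lemma \ref{B}(iii); both arguments are sound.
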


\begin{proof}

 Suppose no component of $B$   is transversal to $f$. Then because $B$ is connected and we are assuming that $B$ has common components with $\sum F_i^s$ we have statement (i). 

If there is a component   $H$ of $B$ transversal  to $f$  the assumption that $ B$ has common components with $\sum_{i=1}^3 F_i^s$  implies $B-H\neq 0$. Recall that by Lemma \ref{lem: B}, the divisor $B$ is $2$-connected and $p_a(B)=1$, so,  by Lemma \ref{B},    $H\simeq \mathbb P^1$ and $B-H$ is connected.  From Lemma \ref{components} $B-H$ is contained in fibers of $f$ and since $B-H$ is connected we obtain $B-H$  contained in $F_i^s$ for one of $i=1,2,3$, say $F_1^s$. 

In this case $HF_1^s\geq 2$ (since $B$ is 2-connected)  and $HF_i^s\geq 1$ for $i=2,3$, because  $F_i^s$ is the support of a full fiber.  Now $p_a(H+\sum_{i=1}^3 F_i^s)\geq p_a(H)+\sum_{i=1}^3 p_a(F_i^s) + H(\sum_{i=1}^3F_i^s)-3$. Since $h^0(D,\omega_D)=1$ (cf. Lemma \ref{lem: B}) $D$ cannot contain an effective divisor with $p_a\geq 2$ and so  necessarily $HF_1^s=2$ and $HF_i^s= 1$ for $i=2,3$.
\end{proof}
 \begin{lm}  \label {-10}  Let $E_j$, $j=1,\dots n$, be  the $-1$-curves contracted by   the blow down morphism  $\rho \colon X\to T$  and let  $B\preceq D$ be the divisor such that $K_X+B\sim \sum_j E_j$ (cf. Lemma \ref{lem: B}): \begin{enumerate} 
 \item if  $F_i^s\leq D-B$ then $ E_jF_i^s\leq 1$  for all $E_j$;
 \item the general fiber $F$ of $f$ satisfies $F\sum_{j=1}^nE_j=0$.
 \end{enumerate}
\end{lm}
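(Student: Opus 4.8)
The two statements call for different ideas, so the plan is to prove them separately; both rest on the linear equivalence $\sum_{j=1}^n E_j\sim K_X+B$ of Lemma \ref{lem: B}, on the identity $h^0(X,2K_X+D)=p_g(X)$ of Lemma \ref{ho}, and on the adjunction-type estimates of Lemmas \ref{lem: omegaD}, \ref{pa sum} and \ref{lem: pa0}. For (i), I would bound $h^0(X,K_X+E_j+F_i^s)$ from both sides. For the upper bound, since $E_j\preceq\sum_kE_k$ and, by hypothesis, $B+F_i^s\preceq D$, one has $E_j+F_i^s\preceq\sum_kE_k+F_i^s$, whence
$$h^0(K_X+E_j+F_i^s)\le h^0(K_X+\textstyle\sum_kE_k+F_i^s)=h^0(2K_X+B+F_i^s)\le h^0(2K_X+D)=p_g(X),$$
the middle equality coming from $\sum_kE_k\sim K_X+B$ and the last from Lemma \ref{ho}. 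For the lower bound, Lemma \ref{lem: omegaD}(i) together with $q(X)=0$ gives $h^0(K_X+E_j+F_i^s)\ge p_a(E_j+F_i^s)+p_g(X)$. Comparing the two yields $p_a(E_j+F_i^s)\le0$. Since the $E_j$ are $(-1)$-curves we have $p_a(E_j)=0$, and since $f$ has connected fibres (Lemma \ref{lem: 3fibre}) the support $F_i^s$ is connected, hence $1$-connected, so $p_a(F_i^s)\ge0$ by Lemma \ref{lem: pa0}. The additivity formula of Lemma \ref{pa sum}(i) then gives $E_j\cdot F_i^s=p_a(E_j+F_i^s)-p_a(E_j)-p_a(F_i^s)+1\le 1$, which is (i).

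For (ii), I would write $F\cdot\sum_jE_j=\sum_jF\cdot E_j$; each summand is $\ge0$ because $F$ is nef, and $F\cdot E_j>0$ precisely when $E_j$ has a component transversal to $f$. Suppose for contradiction that some $E_{j_0}$ has a transversal component $H$. By Lemma \ref{components} the divisor $\sum_jE_j+D$ has at most one transversal component, so $H$ is the only one; in particular every component of $D$ other than $H$ is vertical. Now $B\preceq D$ is disjoint from $\sum_jE_j$ in case (a) of Lemma \ref{lem: B} (so $H\not\preceq B$), while $B=0$ in case (b); in either case $B$ is vertical, hence $F\cdot B=0$. Using $\sum_jE_j\sim K_X+B$, $F^2=0$ and adjunction, we obtain $F\cdot\sum_jE_j=F\cdot K_X+F\cdot B=2g(F)-2$, where $g(F)$ is the genus of the general fibre. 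Because $X$ is rational in case (a) and birational to a K3 surface in case (b), it has non-positive Kodaira dimension and therefore carries no fibration over a curve whose general fibre has genus $\ge2$; thus $g(F)\le1$ and $F\cdot\sum_jE_j=2g(F)-2\le0$, contradicting $F\cdot\sum_jE_j\ge F\cdot E_{j_0}\ge1$. Hence no $E_j$ is transversal to $f$ and $F\cdot\sum_jE_j=0$.

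The routine half is (i); the delicate step is (ii). The crux there is to convert the strict inequality $F\cdot\sum_jE_j>0$ into the existence of a curve transversal to $f$ and then to force the entire boundary — and with it $B$ — to be vertical, which is exactly where Lemma \ref{components} and the disjointness of $B$ from the exceptional locus (case (a) of Lemma \ref{lem: B}) are indispensable: if $B$ were not vertical the term $F\cdot B$ would survive and the numerical computation would not close. The secondary point requiring care is the exclusion of general fibres of genus $\ge2$, which I would deduce from the classification of surfaces already used in Lemma \ref{lem: B}, since a genus-$\ge2$ fibration would force $\kappa(X)\ge1$, incompatible with $X$ being rational or a blown-up K3.
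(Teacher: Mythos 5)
Your part (i) is correct and uses the same ingredients as the paper: the paper argues that $E_jF_i^s\ge 2$ would force $h^0(\omega_{E_j+F_i^s})=p_a(E_j+F_i^s)\ge 1$ while $E_j+F_i^s\preceq D_1\in|K_X+D|$ and $h^0(D_1,\omega_{D_1})=0$ by Lemma \ref{ho}(ii); your version compares $h^0(X,K_X+E_j+F_i^s)$ against $h^0(X,2K_X+D)=p_g(X)$ from Lemma \ref{ho}(i) and then invokes Lemma \ref{pa sum}. These are equivalent repackagings of the same estimate, and your reduction steps (monotonicity, $p_a(E_j)=0$, $1$-connectedness of $F_i^s$) are all sound.

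Part (ii), however, has a genuine gap, and it sits exactly at the step you flagged as delicate. The claim that a surface of non-positive Kodaira dimension ``carries no fibration over a curve whose general fibre has genus $\ge 2$'' is false: blowing up the $16$ base points of a pencil of plane quartics gives a genus-$3$ fibration on a rational surface, and blowing up the two base points of a genus-$2$ pencil $|C|$, $C^2=2$, on a K3 surface $T$ gives a genus-$2$ fibration on $X$ with $K_X=E_1+E_2$ and $FE_i=1$ --- which is precisely the numerical configuration ($F\cdot\sum_j E_j=2g(F)-2>0$ with $B$ vertical) that you need to exclude, realized on a blown-up K3 as in case (b) of Lemma \ref{lem: B}. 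So your computation $F\cdot\sum_jE_j=2g(F)-2$ is correct but yields no contradiction: it only shows $g(F)\ge 2$ in the bad case. Note also that in the paper the bound $g(F)\le 1$ is a \emph{consequence} of this lemma: only after (ii) does the fibration descend to $T$ (Corollary \ref{cor: fbar}), where $K_T+\rho(B)\sim 0$ gives $FK_T\le 0$ at the start of Lemma \ref{genus1}; invoking the genus bound to prove (ii) reverses the logical order. The paper's actual argument is quite different: it completes the transversal component $M\preceq\sum_jE_j$ (with $M^2=-l$) to a full $(-1)$-curve $E_m=M+\sum_{k=1}^{l-1}E_{p_k}$ using the structure theory of exceptional divisors in \cite{ADE}, deduces $E_m\sum_iF_i^s\ge 3$ from Lemma \ref{ei}, and then estimates $p_a(2E_m+\sum_iF_i^s)$ to obtain $h^0(X,3K_X+2B+D)\ge 2$, contradicting $\ol P_3(V)=1$, or $h^0(X,3K_X+B+D)\ge p_g(X)+1$, contradicting Lemma \ref{ho}. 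This is where the hypothesis $\ol P_3(V)=1$ enters the proof; your argument for (ii) never uses it, which is itself a warning sign that a purely classification-theoretic shortcut cannot work here.
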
 
\begin{proof}
(i)
 Assume  that $E_jF_i^s\geq 2$. Then $h^0(E_j+F_i^s,\omega_{E_j+F_i^s})=p_a(E_j+F_i^s)\geq  p_a(F_i^s)+1\geq 1$ and so since  by the assumption    $F_i^s\leq D-B$, we have $E_j+F_i^s\leq D_1\in |K_X+D|$, contradicting Lemma \ref{ho} (cf. Lemma \ref{pa sum}).
 \medskip
 
 (ii)  Suppose otherwise, i.e., that there is  an irreducible curve $M\preceq \sum_{j=1}^n E_j$  transversal to the fibration $f$. Since $B$ and $ \sum_{j=1}^n E_j$ are disjoint (cf. Lemma \ref{lem: B}), by Lemma \ref{components}   the divisor $B$ must be contained in a fiber of $f$.

 Since each $F_i^s$ is  the support of a whole fiber we have $MF_i^s\geq 1$. 
 Since $M\preceq  \sum_{j=1}^n E_j$, we have $M^2=-l <0$. Then   among the $E_j$'s there are $E_{p_1}, ...,E_{p_{l-1}}$ such that  $E_m=M+\sum_{k=1}^{l-1} E_{p_k}$ is one of the $E_j$. This is  true by  \cite[Lemma~3.2]{ADE} if $l=1$, by  \cite[Prop.~4.1]{ADE} if $l=2$ and by \cite[Prop.~4.2]{ADE} if $l>2$.
 
  Since $E_{p_k}F_i^s\geq 0$ for all $k=1,\dots l-1$ and  $i=1,2,3$   by  Lemma \ref{ei}, we obtain $E_m(\sum F_i^s)\geq 3$.  Since $E_m(E_m+\sum _{i=1}^3F_i^s)\geq 2$,  one obtains
  \begin{itemize}
   \item[(a)] $p_a(2E_m+\sum_{i=1}^3F_i^s)\geq 2$ if  $p_g(X)=0$ and $B\neq 0$ is contained in one of the $F_i^s$. 
  \item[(b)]  $p_a(2E_m+\sum _{i=1}^3F_i^s)\geq 1$,  if $p_g(X)=1$ or  $p_g(X)=0$ and $B$ is not contained in $\sum _{i=1}^3F_i^s$; 
  \end{itemize}
 Since $E_m\leq K_X+B$, we have  $2E_m+\sum F_i^s\leq 2K_X+2B+D$,  which in case (a) yields $h^0(X, 3K_X+2B+ D)\geq 2$,  contradicting $\ol P_3(V)=1$.
 In case (b)  we have $2E_m+\sum F_i^s\leq 2(K_X+B)+ (D-B)=2K_X+B+D$,  yielding $h^0(X, 3K_X+B+ D)\geq p_g(X)+1$ and  contradicting  Lemma \ref{ho}.
  \end{proof} 
  
  \begin{cor}\label{cor: fbar} 
  The fibration $f\colon X\to \mathbb P^1$ descends to a fibration $\bar  f\colon T\to \mathbb P^1$.
  \end{cor}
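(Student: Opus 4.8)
The plan is to produce $\bar f$ by factoring $f$ through the birational contraction $\rho\colon X\to T$, using the universal property of $\rho$. Since $T$ is smooth (in particular normal) and $\rho$ is a proper birational morphism with $\rho_*\OO_X=\OO_T$, a morphism $f\colon X\to \mathbb P^1$ factors as $\bar f\circ \rho$ for some $\bar f\colon T\to \mathbb P^1$ precisely when $f$ is constant on every fiber of $\rho$; equivalently, when $f$ contracts to a point each irreducible curve contracted by $\rho$. Thus the whole statement reduces to checking that every component of the exceptional locus $\sum_{j=1}^n E_j$ is vertical for $f$.

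This verticality is exactly what Lemma \ref{-10}(ii) provides. Writing $\sum_{j=1}^n E_j=\sum_k m_k C_k$ with the $C_k$ irreducible and $m_k>0$, and letting $F$ be a general fiber of $f$, we have $0=F\sum_{j=1}^n E_j=\sum_k m_k (F\cdot C_k)$ by Lemma \ref{-10}(ii). Because $F$ is a general fiber it contains no $C_k$, so $F\cdot C_k\ge 0$ for every $k$; since the $m_k$ are positive, each summand must vanish, i.e.\ $F\cdot C_k=0$ for all $k$. An irreducible curve $C_k$ with $F\cdot C_k=0$ cannot dominate $\mathbb P^1$ under $f$ (a general fiber would otherwise meet it), so $f(C_k)$ is a point.

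Hence $f$ contracts every irreducible component of $\sum_{j=1}^n E_j$. As each fiber of $\rho$ is a connected union of such components and $\mathbb P^1$ is separated, $f$ is constant on every fiber of $\rho$, and the universal property quoted above yields the desired morphism $\bar f\colon T\to \mathbb P^1$ with $\bar f\circ \rho=f$. Finally, $\bar f$ is again a fibration: from $\rho_*\OO_X=\OO_T$ and $f_*\OO_X=\OO_{\mathbb P^1}$ one gets $\bar f_*\OO_T=\bar f_*\rho_*\OO_X=f_*\OO_X=\OO_{\mathbb P^1}$, so $\bar f$ has connected fibers. The only real content is the verticality of the exceptional curves, which is immediate from Lemma \ref{-10}(ii); the remaining step is the standard factorization of a morphism through a birational contraction of smooth surfaces.
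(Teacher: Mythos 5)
Your proof is correct and follows exactly the route the paper intends: the paper states this corollary without proof as an immediate consequence of Lemma \ref{-10}(ii), and your argument simply makes explicit the two standard steps involved (each exceptional component is vertical because $F\sum_j E_j=0$ forces $F\cdot C_k=0$, and a morphism constant on the connected fibers of a birational contraction of smooth surfaces factors through it, with $\bar f_*\OO_T=\OO_{\mathbb P^1}$ giving connected fibers). Nothing is missing; the write-up is a faithful expansion of the paper's implicit reasoning.
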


 \begin{lm}  \label{genus1} The general fiber $F$ of  $\bar f$ has genus  $0$. \end{lm} 
\begin{proof}
Clearly $f$ and $\bar f$ have the same general fiber by construction.
 Since $K_T+\rho(B)=0$ and $F$ is nef, we have $FK_T=-F\rho(B)\le 0$, so either $FK_T=0$ and $F$ has genus 1, or $K_TF=-2$ and $F$ is smooth rational.
 
 Assume by contradiction that $F$  has genus $1$.  In  case $p_g(X)=0$  all the components of $B$ are contracted by $f$ since $F\rho(B)=0$, so $B$, being connected, is contained in a fiber of $f$ and we may   assume that $B$ and   $F_2^s+F_3^s$ are disjoint. 
 The divisor  $D_0=B+F_2^s+F_3^s$ satisfies   $h^0(D_0,\omega_{D_0})=p_a(B)+p_a(F_2^s)+p_a(F_3^s)=1+p_a(F_2^s)+p_a(F_3^s)$. Since  $h^0(D_0,\omega_{D_0})\le h^0(D,\omega_{D})=\ol P_1(V)=1$ (cf. Lemma \ref{lem: omegaD}), we conclude that $p_a(F_2^s)=p_a(F_3^s)=0$. If $p_g(X)=1$, applying the same argument to $D_0:=F_1^s+F_2^s+F_3^s$ we obtain $p_a(F_i^s)=0$ for $i=1,2,3$.

Denote by $\ol F_i$ the full fiber of $\bar f$ corresponding to $F_i^s$ for  $i=1,2,3$. If $p_g(X)=1$ then $T$ is minimal by Lemma \ref{lem: B}, hence $\ol F_i$ does not contain $-1$-curves for $i=1,2,3$. If $p_g(X)=0$ and $E$ is an irreducible $-1$ curve of $T$, then  $\rho(B)E=-K_TE=1$, namely $E$ meets $\rho(B)$. So in this case there is no $-1$-curve contained in  $\ol F_2+\ol F_3$. 

 By   Lemmas \ref {ei}  and  \ref{-10}, we know that     $0\leq F_i^s E_j\leq 1$  for $i=2,3$ and also for $i=1$ if $p_g(X)=1$. Therefore  $\rho^*(\rho(F_i^s))\leq F_i^s+\sum_{j=1}^n E_j$. So    the  reduced divisor $\rho(F_i^s)$ still has  $p_a=0$ for $i=1,2,3$ in the  case $p_g(X)=1$ and for $i=2,3$ in the  case  $p_g(X)=0$. 
 
 For  $i=1,2,3$ in the case $p_g(X)=1$ and for $i=2,3$ in the case  $p_g(X)=0$, the elliptic  fibers $\ol F_i$, since they   do not  contain  $-1$-curves and have support with $p_a=0$,  must be of type $^*$   (see \cite[Chp.V, \S 7]{BHPV}).  Note that, in particular, the  $\ol F_i$ cannot be multiple fibers of $\bar f$   (see \cite[Chp.V, \S 7]{BHPV}). 
 So if $\ol F$ is a fiber of type $^*$ with support   $F_0$, we have  $2F_0\geq \ol F$ if   $\ol F$ is of type $I_b^*$ ($=\tilde D_{4+b}$)   and $3F_0\geq \ol F$ if  $\ol F$ is of type $IV^*$ ($=\tilde E_6$).

On the other hand, if $p_g(X)=0$,  note  that  the sum of the Euler numbers $e(\ol F_2)+e(\ol F_3)$ cannot exceed 12, since a relatively minimal elliptic fibration 
on a rational surface has $c_2=12$.  Since  the Euler numbers of fibers of type $^*$ are always bigger than $6$ except for type $I_0^*$,  $\ol F_2$  and $\ol F_3$ must be of type $I_0^*$  and so, for instance,  $ 2(F_2^s+\sum_{j=1}^n E_j)$ contains $\rho^*(\ol F_2)=F_2$. Since  $h^0(X,F_2)=2$  we obtain $h^0(X, 2K_X+2B+2F_2^s)\geq 2$, contradicting $h^0(X, 2K_X+2D)=1$.

  Similarly, if $p_g(X)=1$  then the sum of the Euler numbers $e(\ol F_1)+e(\ol F_2)+e(\ol F_3)$ cannot exceed 24.   If one of the $\ol F_i$ is of type $I_b^*$  we have a contradiction as above and if one of the $\ol F_i$ is of type $IV^*$ ($=\tilde E_6$), then $F_i\leq 3F_i^s+3\sum_{j=1}^n E_j\leq 3K_X+3D$, contradicting $h^0(X, 3K_X+3D)=1$.  On the other hand  if all the $\ol F_i$ are of type  $II^*$ ($=\tilde E_8$)  or type $III^*$ ($=\tilde E_7$) the sum of the Euler numbers is larger than 24.
   \end{proof} 
\begin{proof}[Proof of Proposition \ref{prop: dominant0}]
By Lemma \ref{genus1} the general fiber $F$ of  $f$ has  genus $0$, and in particular we have $p_g(X)=0$. 
Since $K_XF=-2$, we have $BF=2$ and so  $B$ has a component $H$ transversal to $f$.  If $B$ has no common component with $F_i^s$, $i=1,2,3$, then  we set $D_0=B+F_1^s+F_2^s+F_3^s$ and we compute $$p_a(D_0)=p_a(B)+p_a(F_1^s)+p_a(F_2^s)+p_a(F_3^s ) +B(F_1^s+F_2^s+F_3^s)-3.$$ Since $D_0$ is reduced and connected, we have $p_a(D_0)=h^0(D_0,\omega_{D_0})\le h^0(D,\omega_D) = \ol P_1(V)=1$ (cf. Lemma \ref{pa sum} and  \ref{lem: omegaD}) and  we conclude that $p_a(F_i^s)=0$, $BF_i^s=1$ for $i=1,2,3$. On the other hand, if $B$ and $F_1^s+F_2^s+F_3^s$ have common components, then  by Corollary \ref{common} at least two of the $F_i^s$, say $F_2^s$ and $F_3^s$, have no common components with $B$ and satisfy $F_i^s B=1$ for $i=2,3$.  

So in any case $F_2^s$ contains a unique irreducible curve $\Gamma$ such that $\Gamma B\neq 0$.  Since $BF=2$ for a general fiber of $F$ and $BF_2^s=1$, $\Gamma$  appears with multiplicity $2$ in the full fiber $F_2$ containing  $F_2^s$.    The curve $\Gamma$ is not contracted by $\rho$, since $B\Gamma=1$ and $B$ does not meet the $\rho$-exceptional curves.

Write $\rho^*(\rho(\Gamma))=\Gamma +Z$, with $Z$ an exceptional divisor. Since $B=  \rho^*(\rho(B))$ the projection formula gives 
$$1=B\Gamma= \rho^*(\rho(B))\Gamma=\rho^*(\rho(B))(\Gamma+Z)=\rho^*(\rho(B))\rho^*(\Gamma)=\rho(B)\rho(\Gamma).$$
 Since $K_T+\rho(B)=0$, we have $K_T\rho(\Gamma)=-1$. The curve $\rho(\Gamma)$ is contained in a fiber of $\bar f$, so $\rho(\Gamma)^2\le 0$ and $\rho(\Gamma)$ is a $-1$-curve  by the adjunction formula.   
So $\ol F_2=2\rho(\Gamma) +C$, where $C$ does not contain $\rho(\Gamma)$. The components of $C$ do not meet $\rho(B)=-K_T$, hence they are all $-2$-curves. 
From $\rho(\Gamma) \ol F_2=0$, we get $C\rho(\Gamma)=2$. If there are two distinct components $N_1$ and $N_2$ of $C$ with  $\rho(\Gamma)N_i=1$, then $N_1$ and $N_2$ are disjoint, since the dual graph of $\ol F_2$ is a tree. So $(2\rho(\Gamma)+N_1+N_2)^2=0$ and therefore $\ol F_2=2\rho(\Gamma)+N_1+N_2$. If there is only a component $N$ of $G$ with $N\rho(\Gamma)=1$, then  $N $ appears in $\ol F_2$ with multiplicity 2 and $M_1:=\rho(\Gamma)+N$ is a (reducible and reduced ) $-1$-curve such that $\ol F_2=2M_1+C_1$ and $C_1$ and $M_1$ have no common component. So we may repeat the previous argument and either write $\ol F_2=2M_1+N_1+N_2$, where $N_1,N_2$ are disjoint  $-2$-curves contained in $C_1$ with $N_i M_1=1$, or $\ol F_2=2(M_1+N)+C_2$, where   $N$ is a component of $C$  such that $M_2:=M_1+N$ is a $-1$-curve and $C_2$ and $M_2$ have no common component. This process must of course terminate, showing that  $\ol F_2=2M_0+N_1+N_2$, where $M_0$ is a reduced $-1$-curve, and $N_1$ and $N_2$ are disjoint $-2$-curves not contained in $M_0$. In particular we have shown that $\ol F_2$ has no component of multiplicity $>2$.

Since  (as in the proof of Lemma \ref{genus1}) $\rho^*(\rho (F_2^s))\leq F_2^s+\sum_{j=1}^n E_j$ we conclude that $2F_2^s+2\sum_{j=1}^n E_j$ contains the full fiber $F_2$ of $f$ 
and as in the in the proof of Lemma \ref{genus1}   we obtain $h^0(X, 2K_X+2B+2F_2^s)\geq 2$, contradicting $h^0(X, 2K_X+2D)=\ol P_2(V)=1$.

So we have excluded all the possibilities for the genus of the general fiber $F$ of the fibration induced by $a_V$ if $a_V$ is not dominant and the proof is complete.
\end{proof}

\begin{ex}\label{rem: sharp}

The hypothesis $\ol P_3(V)=1$ in Proposition \ref{prop: dominant0} is  necessary. A K3 surface with an elliptic fibration with three singular fibers of type $IV^*$(cf. the proof of Lemma \ref{genus1}) does exist. Take for instance  the elliptic curve $C$ with an automorphism $h$ of order 3,  let $\mathbb Z/3$ act on $C \times C$  by $(x,y)\mapsto (hx,h^2y)$ and denote by  $X_0$ the quotient surface. Then $X_0$ has 9 singular points of type $A_2$, and the first  projection $C\times C\to C$ descends to an isotrivial elliptic fibration $X_0\to C/\mathbb Z_3\cong \mathbb P^1$ with three ``triple'' fibers, each containing three of the 9 singular points. The minimal resolution $X$ of $X_0$ is a K3 surface with an isotrivial elliptic fibration with three fibers of type $IV^*$.
Alternatively, $X$ can be constructed as  the minimal resolution of a simple   $\mathbb Z_3$-cover of 
$\mathbb P^1\times \mathbb P^1$ branched   over three fibers of one of the fibrations plus three fibers of the other. 

Consider the surface $V:=X\backslash\{F_1^s,\;F_2^s,\;F_3^s\}$, where the $F_i^s$ denote the supports of the 3 fibers of type IV*. Then $V$ has $\ol P_1(V)=\ol P_2(V)=1$ and $\ol q(V)=2$ (see  Proposition \ref{prop: form}), and  its quasi-Albanese map is the restriction of the elliptic fibration $X\to\mathbb P^1$ and so it is not dominant. 
\end{ex}

\section{Proof of Theorem A} \label{sec: main}
Thanks to what we have proven in the previous section, we know that in the assumptions of Theorem \ref{Main}  the quasi-Albanese map of $V$ is dominant. Since $V$ and $\operatorname{Alb}(V)$ have the same dimension, we can conclude that $a_V$ is generically finite. In particular there is a generically finite morphism $g:X\rightarrow Z$, where $Z$ is the compactification of $\operatorname{Alb}(V)$ described in Proposition \ref{prop: compactification}. Now, our proof boils down to  the following two facts
\begin{enumerate}
    \item[1.] the morphism $g$ has  degree 1;
    \item[2.] all the components of $D$ that are not mapped  to the boundary  $\Delta$ of $\operatorname{Alb}(V)$ are contracted by $g$.
\end{enumerate}
When $q(V)=2$, the first assertion is Corollary  \ref{thm:qx=2}.  The second statement can be proven  by contradiction (see \S \ref{ssec: conclusion}).\par
The situation is  more involved when $q(V)<2$.  We start by proving  a slightly weaker version of  assertion 2 (Lemma \ref{lem: C}) and we use it  to  show that the finite part of the Stein factorization of $g$ is \'etale over $A(V)$ (in case $q(V)=1$ this requires also a topological argument). Now the universal property of the quasi-Albanese map implies that $g$ has indeed  degree 1. Finally,  we complete the proof of assertion 2 by means of a local computation.  


\subsection{Preliminary steps}
\begin{nota}
We let $V$ be a smooth open algebraic surface $V$ with $\overline{q}(V)=2$. 
We assume $\ol P_1(V)=\ol P_2(V)=1$ if $q(V)\ge 1$ and $\ol P_1(V)=\ol P_3(V)=1$ if $q(V)=0$.

If $q(V)\le 1$ fix a compactification  $Z$  with boundary $\Delta$ of the quasi-Albanese variety $A(V)$ of $V$ as follows:
\begin{itemize}
\item if $q(V)=1$, we take $Z$  a $\mathbb P^1$-bundle over the compact part $A$ of $A(V)$ as in Corollary \ref{cor: Z}, and we write $\Delta=\Delta_1+\Delta_2$, where $\Delta_i$ are disjoint sections of $Z\to A$;
\item if $q(V)=0$ (and thus $A(V)=\mathbb G_m^2$) we take $Z=\mathbb P^1\times \mathbb P^1$ with the obvious choice of the boundary $\Delta$. 
\end{itemize}

We fix a compactification $X$ of $V$ with snc boundary $D$ such that the quasi-Albanese map $a_V\colon V\to A(V)$ extends to a morphism $g\colon X \to Z$. In addition,  we write $H:=g\inv (\Delta)$ (set-theoretic inverse image) and  in case $q(V)=1$ we also write $H_i=g\inv (\Delta_i)$, $i=1,2$. Note that $D\succeq H$ by construction. When $q(V)=1$ we denote by $a_X\colon X\to A=A(X)$ the Albanese map of $X$.
\end{nota}

The proof is quite involved so we break it into several smaller steps and we examine the case $q(V)\leq 1$ since by Corollary  \ref{thm:qx=2}, for $q(V)=2$ we already know that $a_V$ is birational.

\begin{lm}\label{lem: pg0}
If $q(V)\le 1$, then the divisor  of poles of a  generator of $H^0(X, K_X+D)$ is a non zero subdivisor of $H$.\newline 
In particular, $p_g(X)=0$ and $h^0(X, K_X+H) =1$.
\end{lm}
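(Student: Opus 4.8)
The plan is to exhibit the generator of $H^0(X,K_X+D)$ explicitly as the pullback of the invariant logarithmic $2$-form on $Z$, and then to read off its polar divisor directly.

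First I would record that, by Proposition \ref{prop: compactification}(b), $\Omega^2_Z(\log\Delta)=\bigwedge^2\Omega^1_Z(\log\Delta)\cong\sO_Z$, so $K_Z+\Delta\sim 0$ and $H^0(Z,\Omega^2_Z(\log\Delta))$ is generated by a nowhere-vanishing logarithmic $2$-form $\eta$. Since $a_V$ is dominant (Proposition \ref{prop: dominant}) and $\dim V=\dim A(V)=2$, the extension $g\colon X\to Z$ is generically finite, hence surjective. Therefore $\omega:=g^*\eta$ is a nonzero logarithmic $2$-form with at worst logarithmic poles along $H=g\inv(\Delta)$, i.e. a nonzero section of $\Omega^2_X(\log H)\subseteq \Omega^2_X(\log D)=\sO_X(K_X+D)$; as $\ol P_1(V)=1$, it generates $H^0(X,K_X+D)$, and its polar divisor is exactly what must be described.

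Next I would show the polar divisor is contained in $H$. If $\Gamma$ is a component of $D$ that is not a component of $H$, then $g(\Gamma)\not\subseteq\Delta$, so a general point of $\Gamma$ maps into $A(V)=Z\setminus\Delta$, where $\eta$ is regular; hence $\omega=g^*\eta$ is regular near the general point of $\Gamma$ and has no pole there. To see the polar divisor is nonzero I would use that, as $q(V)\le 1$, the torus part of $A(V)$ is nontrivial, so $\Delta\neq\emptyset$ — this is the only place the hypothesis $q(V)\le 1$ enters. Since $Z$ is rational (when $q(V)=0$) or ruled over an elliptic curve (when $q(V)=1$), we have $h^0(Z,K_Z)=0$, so $\eta$ is not holomorphic and has a nonzero residue along some component $\Delta_0$ of $\Delta$, with $\mathrm{Res}_{\Delta_0}\eta$ a nonzero $1$-form on $\Delta_0$. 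By surjectivity of $g$, some component $\Gamma$ of $H$ dominates $\Delta_0$. Writing $g^*u=t^{a}\cdot(\text{unit})$ for a local equation $u$ of $\Delta_0$ and a coordinate $t$ transverse to $\Gamma$ (so $a\ge 1$), a local residue computation gives $\mathrm{Res}_\Gamma\omega=a\,(g|_\Gamma)^*\mathrm{Res}_{\Delta_0}\eta$, which is nonzero because $g|_\Gamma\colon\Gamma\to\Delta_0$ is dominant and $\mathrm{Res}_{\Delta_0}\eta\neq0$. Thus $\omega$ has a genuine pole along $\Gamma$, and the polar divisor is a nonzero subdivisor of $H$.

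Finally, the two ``in particular'' assertions are formal consequences. Since $\omega$ has poles only along a reduced subdivisor of $H$, it lies in $H^0(X,K_X+H)$; as $H\preceq D$ and $h^0(X,K_X+D)=1$, this forces $h^0(X,K_X+H)=1$. Moreover $H^0(X,K_X)\subseteq H^0(X,K_X+D)=\langle\omega\rangle$, while $\omega\notin H^0(X,K_X)$ because it has a genuine pole; hence $H^0(X,K_X)=0$ and $p_g(X)=0$. I expect the main obstacle to be precisely the nonvanishing of the polar divisor, namely ruling out that pulling $\eta$ back by $g$ cancels all of its poles; this is handled by combining the existence of a noncontracted component $\Gamma$ of $H$ (from surjectivity of $g$) with the residue identity $\mathrm{Res}_\Gamma\omega=a\,(g|_\Gamma)^*\mathrm{Res}_{\Delta_0}\eta$.
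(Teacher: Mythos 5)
Your proof is correct and takes essentially the same route as the paper: both arguments pull back the nowhere-vanishing logarithmic $2$-form on $Z$ (the paper's $\omega=\tau_1\wedge\tau_2$, your $\eta$) along the surjective extension $g$, note that its poles are contained in $H$, and check that a genuine pole survives along a component of $H$ dominating a component of $\Delta$, from which $p_g(X)=0$ and $h^0(X,K_X+H)=1$ follow exactly as you state. Your residue identity $\mathrm{Res}_\Gamma\,\omega=a\,(g|_\Gamma)^*\mathrm{Res}_{\Delta_0}\eta$ just makes explicit the ``local computation'' the paper invokes, so there is nothing substantively different to compare.
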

\begin{proof} The vector space $H^0(Z,\Omega^1_Z(\log \Delta))$ is generated by two logarithmic $1$-forms $\tau_1$ and $\tau_2$ such that $\omega:=\tau_1\wedge \tau_2$  vanishes nowhere on $A(V)$ (cf. Proposition \ref{prop: compactification}) and has poles exactly on $\Delta$. More precisely, if $q(V)=1$ then we can take $\tau_1$ to be the pull back of a nonzero regular 1-form on $A=A(X)$ via the projection $Z\to A$ and $\tau_2$ a logarithmic $1$-form with poles on $\Delta_1$ and $\Delta_2$ (cf. proof of Corollary \ref{cor: Z}); if $q(V)=0$ then we can take $\tau_1$ and $\tau_2$ to be  pullbacks of  non zero logarithmic forms via the two projections $ \mathbb G_m^2\to \mathbb G_m$. 

 Since $g$ is surjective by Propositions \ref{prop: dominant1} and \ref{prop: dominant0}, $g^*\omega$ is a non-zero logarithmic form on $X$ and a local computation shows that if $\Gamma$ is an irreducible component of $H$ not contracted by $g$, then $g^*\omega$ has a pole along $\Gamma$. On the other hand the poles of $\omega$ are contained in $H$ by construction. 

Since $\ol P_1(V)=h^0(X, K_X+D)=1$, it follows immediately that $p_g(X)=0$  and $h^0(X, K_X+H) =1$.
\end{proof}

\begin{lm}\label{lem: connected}
\begin{enumerate}
\item If $q(V)=0$, then $H$ is connected and $h^0(\omega_H)=1$;
\item If $q(V)=1$, then for $i=1,2$ the divisor $H_i$ is connected and $h^0(\omega_{H_i})=1$ 
\end{enumerate}
\end{lm}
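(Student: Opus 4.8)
The plan is to extract both conclusions from the cohomological identities of Lemma \ref{lem: omegaD}, fed by the input of Lemma \ref{lem: pg0}, and to settle connectedness by a Stein-factorization argument at the end. Recall from Lemma \ref{lem: pg0} that $p_g(X)=0$ and $h^0(X,K_X+H)=1$, and that $g^*\omega$ generates $H^0(X,K_X+D)$ with its polar divisor a non-zero subdivisor of $H$.

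First I would compute the $h^0$. In case $q(V)=0$ I apply Lemma \ref{lem: omegaD}(ii) to $H$: since $q(X)=0$ we have $H^1(X,\OO_X)=0$, so the correction term $d$ vanishes, and with $p_g(X)=0$ this gives $h^0(H,\omega_H)=h^0(X,K_X+H)=1$. In case $q(V)=1$ I first show $h^0(X,K_X+H_i)=0$: indeed $H^0(X,K_X+H_i)\subseteq H^0(X,K_X+D)=\langle g^*\omega\rangle$, but $g^*\omega$ has a genuine log pole along a non-contracted component $\Gamma\subseteq H_{3-i}$ (such $\Gamma$ exists because $g^*\Delta_{3-i}$ dominates $\Delta_{3-i}$), and $\Gamma$ is disjoint from $H_i$, so $g^*\omega\notin H^0(X,K_X+H_i)$. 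Then Lemma \ref{lem: omegaD}(ii) reads $0=h^0(H_i,\omega_{H_i})-1+d_{H_i}$, and $d_{H_i}=0$ because $H_i$ maps onto the elliptic curve $A$ under $a_X$ (again $g^*\Delta_i$ dominates $\Delta_i\cong A$), so the pullback of a non-zero holomorphic $1$-form shows $H^1(X,\OO_X)\to H^1(H_i,\OO_{H_i})$ is injective. Hence $h^0(H_i,\omega_{H_i})=1$.

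For connectedness I argue by contradiction. Write the (reduced, set-theoretic) divisor $H$ (resp.\ $H_i$) as a disjoint union $\bigsqcup_k H^{(k)}$ of its connected components; each is reduced and connected, hence $1$-connected, so by Lemma \ref{lem: pa0}(i) one has $h^0(\OO_{H^{(k)}})=1$ and $p_a(H^{(k)})=h^0(\omega_{H^{(k)}})\ge 0$. Since $h^0(\omega_H)=\sum_k h^0(\omega_{H^{(k)}})=1$, exactly one component has $p_a=1$ and all the others have $p_a=0$; as we are assuming disconnectedness, there is at least one component $H^{(j)}$ with $p_a=0$. Next I claim $H^{(j)}$ is entirely contracted by $g$. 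When $q(V)=1$ a non-contracted component of $H^{(j)}\subseteq g^{-1}(\Delta_i)$ would map onto the irreducible curve $\Delta_i\cong A$, hence dominate an elliptic curve and have $p_a\ge 1$, forcing $p_a(H^{(j)})\ge 1$ by Lemma \ref{pa sum}(ii), a contradiction. When $q(V)=0$ the generator $g^*\omega$ restricts (via adjunction) to a non-zero $\bar s\in H^0(H,\omega_H)$, which must be supported on the unique $p_a=1$ component because the $p_a=0$ components contribute nothing to $H^0(\omega_H)$; but along any non-contracted component $\Gamma$ the restriction of $g^*\omega$ is the non-zero Poincar\'e residue, so $H^{(j)}$ can contain no non-contracted component. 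Either way $H^{(j)}$ is contracted by $g$ to a point.

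The last step, which I expect to be the main obstacle, is to turn ``$H^{(j)}$ is a contracted component'' into a contradiction — note that connectedness of the preimage of a connected divisor under a merely generically finite map is false in general, so the cohomological input really is needed here. I would take the Stein factorization $g=\tilde h\circ\pi$ with $\pi\colon X\to W$ birational (generically finite with connected fibres) and $\tilde h\colon W\to Z$ finite. Since $H^{(j)}$ is $g$-contracted and $\tilde h$ is finite, $H^{(j)}$ is $\pi$-contracted to a point $w$, and being a full connected component of $H=\pi^{-1}(\tilde h^{-1}(\Delta))$ it equals $\pi^{-1}(w)$. Now $w$ lies on the divisor $\tilde h^{-1}(\Delta)$ (the finite preimage of a divisor is a divisor, hence has no isolated points), so $w$ lies on some curve $\tilde D_0\subseteq \tilde h^{-1}(\Delta)$; its strict transform $\Gamma$ is a non-$\pi$-contracted curve contained in $H$ that meets $\pi^{-1}(w)=H^{(j)}$. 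Thus $\Gamma$ and $H^{(j)}$ lie in the same connected component of $H$, contradicting the choice of $H^{(j)}$. Therefore $H$ (resp.\ each $H_i$) is connected, which together with the first step completes the proof.
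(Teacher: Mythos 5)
Your proposal is correct, but its second half takes a genuinely different route from the paper. The $h^0$ computations agree in substance: both run Lemma \ref{lem: omegaD} on the input $p_g(X)=0$, $h^0(X,K_X+H)=1$ from Lemma \ref{lem: pg0}; the only variation is that for $q(V)=1$ the paper computes $h^0(H,\omega_H)=2$ (non-vanishing of $H^1(X,\OO_X)\to H^1(H,\OO_H)$ because $H$ dominates $A$) and splits $2=h^0(\omega_{H_1})+h^0(\omega_{H_2})$ using a positive-genus component $S_i\preceq H_i$, whereas you show $h^0(X,K_X+H_i)=0$ via the pole of $g^*\omega$ on the other side and apply the lemma to each $H_i$ — resting on the same non-vanishing assertion you invoke for $d_{H_i}=0$. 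Connectedness is where you diverge: the paper settles $q(V)=0$ in one line ($H$ supports the nef \emph{and big} divisor $g^*\Delta$) and, for $q(V)=1$, plays the negative definiteness of a $g$-contracted configuration against nefness of $g^*\Delta_i$ (Zariski's lemma); you instead force any extra connected component to be entirely $g$-contracted (genus monotonicity, resp.\ the support of $\bar s$) and then exclude contracted components by Stein factorization, identifying such a component with a full fiber $\pi^{-1}(w)$ and producing a non-contracted curve of $H$ through it via the strict transform of a component of the purely one-dimensional $\tilde h^{-1}(\Delta)$. Your mechanism is uniform in both cases and never uses bigness, while the paper's arguments are shorter and stay within surface intersection theory. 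Two presentational nits, neither a gap: ``pullback of a holomorphic $1$-form'' conflates $H^0(\Omega^1_X)$ with $H^1(\OO_X)$ — the clean statement is that $a_X^*\colon H^1(A,\OO_A)\to H^1(X,\OO_X)$ is an isomorphism and the composite to $H^1(S_i,\OO_{S_i})$ is nonzero (pass to the normalization); and the restriction of a section of $K_X+H$ to a component is not literally the Poincar\'e residue, though the fact you actually use — $s|_\Gamma\neq 0$ precisely when $\Gamma$ is a pole of the log form — is correct.
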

\begin{proof}
(i) The divisor $H$ is connected since it is the support of  the nef and big divisor $g^*\Delta$. Lemma \ref{lem: omegaD} gives $h^0(H,\omega_H)=1$ since $p_g(X)=0$ by Lemma \ref{lem: pg0}. 
\smallskip

(ii) 
Consider  the exact sequence 
$$H^1(X, -H)\to H^1(X, \OO_X)\to H^1(H, \OO_H).$$
The second map in the sequence is non zero, since $H$ is mapped onto $A$ by the Albanese map $a_X$, so in this case Lemma \ref{lem: omegaD} gives  $h^0(H, \omega_H)=2$.
The divisor $H$ is the disjoint union of $H_1$ and $H_2$, so  $2= h^0(H, \omega_H)=h^0(H_1,\omega_{H_1})+h^0(H_2,\omega_{H_2})$. For $i=1,2$ let $S_i$ be a component of $H_i$ such that $g(S_i)=\Delta_i$. Since $\Delta_i$ has geometric genus 1, by Lemma \ref{pa sum} we have $1\le h^0(S_i,\omega_{S_i})\le h^0(H_i,\omega_{H_i})$.  So we have  $h^0(S_i,\omega_{S_i})= h^0(H_i,\omega_{H_i})=1$ for $i=1,2$ and $S_i$ is the only component of $H$ with $p_a>0$.
Assume now by contradiction that $H_i=B_i+C_i$, where $B_i$ and $C_i$ are disjoint non zero effective divisors and $S_i\preceq B_i$. Then all the components  of $C_i$  are rational and so their images via $g$ are contained in fibers of $Z\to A$. Since  $g(C_i)\subset \Delta_i$, it follows that $g(C_i)$ is a finite set. So the intersection form on the set of components of $C_i$ is negative definite, contradicting the fact that $B_i+C_i$ is the support of the nef divisor $g^*\Delta_i$. 
\end{proof}
 Since $K_Z+\Delta=0$, the logarithmic ramification formula \eqref{eq: logram} gives $K_X+D\sim \ol R_g$. We aim  to show that the components of $\ol R_g$ not contained in $H$ are contracted to points. 
 We begin with a simple observation:
\begin{lm} \label{lem: Gamma}
If\,  $\Gamma$ is  an irreducible component of $\ol R_g$, then 
$h^0(X, K_X+H+\Gamma)\le 1$.
\end{lm}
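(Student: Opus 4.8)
The plan is to show that $h^0(X, K_X+H+\Gamma) \le 1$ by analyzing the two exact sequences that relate this space to quantities we already control, namely $h^0(X, K_X+H)=1$ (Lemma~\ref{lem: pg0}) and $h^0(X,K_X+D)=\ol P_1(V)=1$. The key structural fact is that, since $K_X+D\sim \ol R_g$ and $\Gamma$ is a component of $\ol R_g$, the divisor $K_X+H+\Gamma$ sits between $K_X+H$ and $K_X+D$ whenever $\Gamma \preceq D-H$; this is the case I would handle first, as it is the most geometric.

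First I would consider the restriction sequence
\begin{equation*}
0\to \OO_X(K_X+H) \to \OO_X(K_X+H+\Gamma)\to \omega_\Gamma'\to 0,
\end{equation*}
where $\omega_\Gamma'$ denotes the restriction $\OO_\Gamma(K_X+H+\Gamma)$. Since $h^0(X,K_X+H)=1$, it suffices to bound $h^0$ of the restricted bundle on $\Gamma$, or better, to produce a divisor $D_1\in |K_X+D|$ dominating $K_X+H+\Gamma$. Indeed, when $\Gamma$ is a component of $D$ not contained in $H$, we have $K_X+H+\Gamma \preceq K_X+D$, so $h^0(X,K_X+H+\Gamma)\le h^0(X,K_X+D)=1$ and we are done immediately. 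Thus the only genuine case is when $\Gamma$ is a component of $\ol R_g$ that is \emph{not} a component of $D$; by Lemma~\ref{lem: Rg} this forces $\Gamma$ to be a component of the ordinary ramification divisor $R_g$ with $g(\Gamma)\not\subseteq\Delta$, so $\Gamma$ is not contracted by $g$ and $g(\Gamma)\subset A(V)$.

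In that remaining case I would argue via the logarithmic ramification formula together with the fact that $a_V$ pulls back the nowhere-vanishing $2$-form $\omega=\tau_1\wedge\tau_2$ on $A(V)$. The strategy is to show that $K_X+H+\Gamma$ still admits at most a one-dimensional space of sections by comparing it against $K_X+D\sim\ol R_g$: since $\Gamma\preceq \ol R_g$ and $H\preceq D$, one expects $K_X+H+\Gamma\preceq K_X+D$ to persist at the level of global sections through a careful bookkeeping of which components of $\ol R_g$ lie in $H$ versus in the interior. Concretely, I would try to exhibit that any section of $\OO_X(K_X+H+\Gamma)$, paired with the generator $g^*\omega$ of $H^0(X,K_X+D)$ whose pole divisor is a subdivisor of $H$ (Lemma~\ref{lem: pg0}), forces proportionality, because a section of $K_X+H+\Gamma$ can have a pole on $\Gamma$ only if $\Gamma\subseteq H$, which it is not.

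The main obstacle I anticipate is precisely this last case where $\Gamma$ is an interior ramification curve not contained in the boundary: here I cannot simply invoke $\Gamma\preceq D-H$ to get a dominated linear system, and I must instead extract the bound from the rigidity of $H^0(X,K_X+H)=1$ combined with the pole structure of $g^*\omega$. I would expect the cleanest route to be a direct restriction argument: restrict $K_X+H+\Gamma$ to $\Gamma$, use that $\Gamma$ is smooth rational or of controlled genus (since the relevant surfaces are rational or $K3$ by Lemma~\ref{lem: B}) and that $\Gamma$ meets $H$ in few points, to show $h^0(\Gamma, \OO_\Gamma(K_X+H+\Gamma))=0$, whence the connecting sequence yields $h^0(X,K_X+H+\Gamma)=h^0(X,K_X+H)=1$. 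Verifying that the restricted degree on $\Gamma$ is negative (or that the restricted line bundle has no sections) is where the real work lies, and it is the step I would spend the most care on.
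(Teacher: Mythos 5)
There is a genuine gap. Your argument disposes of the easy case $\Gamma\preceq D$ via $K_X+H+\Gamma\preceq K_X+D$ and $\ol P_1(V)=1$, but in the remaining case (an interior ramification component, i.e.\ $\Gamma\preceq R_g$ with $\Gamma\not\preceq D$) you never actually close the argument: you propose restricting to $\Gamma$ and hoping that $h^0(\Gamma,\OO_\Gamma(K_X+H+\Gamma))=0$, and you correctly flag this as ``where the real work lies'' --- but that step has no reason to succeed. Nothing bounds the genus of $\Gamma$ or the degree of the restricted bundle here: when $q(V)=1$ an interior ramification curve can dominate the elliptic curve $A$, and the restriction of $K_X+H+\Gamma$ to $\Gamma$ can perfectly well have sections. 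Your appeal to Lemma~\ref{lem: B} is also out of context: that lemma belongs to the $q(V)=0$ analysis of Section~\ref{sec: dominant}, where the quasi-Albanese image is assumed to be a curve; in the setting of Lemma~\ref{lem: Gamma} the map $g$ is dominant and $X$ need be neither rational nor (a blow-up of) a K3. Likewise the ``pairing with $g^*\omega$ forces proportionality'' idea is not an argument: sections of $K_X+H+\Gamma$ and the rigidity of $H^0(X,K_X+H)$ by themselves give no multiplicativity constraint.

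The missing idea is to use the hypothesis $\ol P_2(V)=1$ rather than trying to squeeze everything out of $\ol P_1(V)=1$. Since $K_Z+\Delta\sim 0$ for the chosen compactification of the quasi-abelian surface, the logarithmic ramification formula \eqref{eq: logram} gives $\ol R_g\sim K_X+D$. Hence for \emph{any} irreducible component $\Gamma$ of $\ol R_g$ (boundary or interior, contracted or not),
\begin{equation*}
(K_X+H)+\Gamma\ \preceq\ (K_X+D)+\ol R_g\ \sim\ 2(K_X+D),
\end{equation*}
so $h^0(X,K_X+H+\Gamma)\le \ol P_2(V)=1$, with no case distinction and no restriction sequence needed. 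This is exactly the paper's (one-line) proof; the lemma is precisely the mechanism by which the assumption $\ol P_2(V)=1$ gets converted into a constraint on $\ol R_g$, so an argument avoiding $\ol P_2$ was unlikely to exist.
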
 
\begin{proof} 
We have $(K_X+H)+\Gamma\preceq (K_X+D)+\ol R_g=2(K_X+D)$. 
So $h^0(X, K_X+H+\Gamma)\le \ol P_2(V)=1$.
\end{proof}
\begin{lm}\label{lem: C}
Let $C$ be the union of all the components of $ \ol R_g$ that are not contained in $H$ and  are not contracted by $g$. Then:
\begin{enumerate}
\item if $q(V)=0$, then  $C=0$;
\item if $q(V)=1$ and $C>0$, then $C$ is irreducible and $g(C)$ is a ruling of $Z$.
\end{enumerate}
\end{lm}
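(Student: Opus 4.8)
The plan is to treat one irreducible component $\Gamma$ of $C$ at a time and to play the bound $h^0(X,K_X+H+\Gamma)\le 1$ of Lemma \ref{lem: Gamma} against the two distinguished logarithmic $1$-forms $\tau_1,\tau_2$ of Lemma \ref{lem: pg0}. First I would record the cohomological input. Since $\Gamma$ is reduced and irreducible, adjunction gives $(K_X+H+\Gamma)|_\Gamma=\omega_\Gamma(H|_\Gamma)$, and in the restriction sequence
$$0\to H^0(X,K_X+H)\to H^0(X,K_X+H+\Gamma)\to H^0(\Gamma,\omega_\Gamma(H|_\Gamma))\overset{\delta}{\to} H^1(X,K_X+H)$$
the equalities $h^0(X,K_X+H)=1$ (Lemma \ref{lem: pg0}) and $h^0(X,K_X+H+\Gamma)\le 1$ force $\delta$ to be injective, so $h^0(\Gamma,\omega_\Gamma(H|_\Gamma))\le h^1(X,K_X+H)=h^1(X,-H)$. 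Using Lemma \ref{lem: omegaD} together with the connectedness of $H$ (resp.\ of $H_1$ and $H_2$) from Lemma \ref{lem: connected}, I get $h^1(X,-H)=0$ when $q(V)=0$ and $h^1(X,-H)=1$ when $q(V)=1$.

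The geometric engine is the restriction to $\Gamma$ of the pulled-back forms $g^*\tau_1$ and $g^*\tau_2$. Both are logarithmic $1$-forms on $X$ whose poles lie only on $H$, so each restricts to a section of $\omega_\Gamma((H\cap\Gamma)_{red})\subseteq\omega_\Gamma(H|_\Gamma)$; moreover they cannot both restrict to $0$, for otherwise $g|_\Gamma$ would be constant and $\Gamma$ would be contracted, against $\Gamma\subseteq C$. When $q(V)=0$ the bound $h^0(\Gamma,\omega_\Gamma(H|_\Gamma))=0$ makes this impossible, so no such $\Gamma$ exists and $C=0$, which proves part (i).

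For part (ii) I would run the dichotomy produced by $g^*\tau_1$. Since $\tau_1$ is the pullback of a nonzero regular $1$-form on the elliptic compact part $A$, one has $g^*\tau_1=a_X^*(\cdot)$, and $g^*\tau_1|_\Gamma=0$ precisely when $a_X|_\Gamma$ is constant, i.e.\ when $g(\Gamma)$ lies in a fibre of $p\colon Z\to A$; as $g(\Gamma)$ is a curve this forces $g(\Gamma)$ to be a ruling, the desired conclusion. It remains to exclude $g^*\tau_1|_\Gamma\ne 0$, i.e.\ $a_X(\Gamma)=A$. In that case $g^*\tau_1|_\Gamma$ is a nonzero holomorphic form and, since $h^0(\Gamma,\omega_\Gamma(H|_\Gamma))\le 1$, the form $g^*\tau_2|_\Gamma$ must be a scalar multiple of it. But $g^*\tau_2$ has nonzero residue along the components $S_1,S_2$ of $H$ dominating $\Delta_1,\Delta_2$, so if $\Gamma$ meets $S_1\cup S_2$ then $g^*\tau_2|_\Gamma$ acquires a genuine pole and becomes independent of the holomorphic $g^*\tau_1|_\Gamma$, giving $h^0(\Gamma,\omega_\Gamma(H|_\Gamma))\ge 2$, a contradiction. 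The delicate point — and the step I expect to be the main obstacle — is to rule out that $\Gamma$ is disjoint from $S_1\cup S_2$: then $g(\Gamma)$ would be a genus-one multisection contained in $A(V)$, necessarily a translate of an elliptic subgroup by Proposition \ref{prop: quasiab}, and excluding it seems to require a finer analysis of $\overline R_g$ (intersecting $K_X+D\sim\overline R_g$ with $\Gamma$ and exploiting $p_a(\Gamma)=1$, $p_g(X)=0$, $q(X)=1$) rather than the clean residue count.

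Finally, once every component of $C$ is known to dominate a ruling, I would show that there is a single such ruling and that $C$ is irreducible by the same "too many pluricanonical sections'' philosophy: two components of $C$ lying over distinct points of $A$, or two distinct components over one ruling, should be made to force $h^0$ of a suitable subsheaf of $2(K_X+D)$ to be at least $2$, contradicting $\overline P_2(V)=1$. The work here is in controlling the poles of the forms pulled back from $Z$ so that they land in $K_X+H+\Gamma+\Gamma'\preceq 2(K_X+D)$ rather than in the larger sheaf with poles along whole fibres of $a_X$; this bookkeeping, together with the disjoint-from-$H$ subcase above, is where I expect the genuine difficulty to be, the cohomological skeleton being otherwise routine.
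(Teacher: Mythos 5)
Your part (i) is correct, and it takes a genuinely different route from the paper. The paper argues that any component $\Gamma$ of $C$ satisfies $H\Gamma\ge 2$ (because $g(\Gamma)\not\subseteq\Delta$ and every curve in $\pp^1\times\pp^1$ meets the boundary $\Delta$ in at least two points), hence $p_a(H+\Gamma)\ge p_a(H)+1=2$, and then Lemma \ref{lem: omegaD} yields $h^0(X,K_X+H+\Gamma)\ge 2$, contradicting Lemma \ref{lem: Gamma}. Your version --- $\delta$-injectivity in the restriction sequence, the computation $h^1(X,-H)=0$, and the nonvanishing of one of $g^*\tau_1|_\Gamma$, $g^*\tau_2|_\Gamma$ --- is sound (modulo routine care in defining the restriction for singular $\Gamma$, via the normalization and $\nu_*\omega_{\wt\Gamma}\subseteq\omega_\Gamma$), and it is a nice residue-theoretic alternative to the paper's genus count.

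Part (ii), however, has a genuine gap, which you partly flag yourself: the subcase $\Gamma\cap H=\emptyset$ cannot be closed by any refinement of the residue method, because the configuration you are trying to exclude actually exists locally: when the bundle $L\in\Pic^0(A)$ of Corollary \ref{cor: Z} is trivial, $A(V)=A\times\mathbb G_m$ really contains complete elliptic curves $A\times\{t\}$, so there is no pole/residue obstruction to a component of $\ol R_g$ mapping onto such a curve; moreover your own bound $h^0(\Gamma,\omega_\Gamma(H|_\Gamma))\le 1$ is perfectly consistent with this configuration, since an elliptic $\Gamma$ disjoint from $H$ has $h^0(\Gamma,\omega_\Gamma)=1$ exactly. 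The contradiction must come from a global count on the \emph{disconnected} divisor $H+\Gamma$, and this is what the paper does: since $\Gamma$ dominates the elliptic curve $A$, one has $p_a(\Gamma)\ge 1$, so $h^0(H+\Gamma,\omega_{H+\Gamma})=h^0(H,\omega_H)+h^0(\Gamma,\omega_\Gamma)\ge 2+1=3$ (using $h^0(H,\omega_H)=2$ from Lemma \ref{lem: connected}); then Lemma \ref{lem: omegaD}(ii), with $p_g(X)=0$, $q(X)=1$ and the kernel dimension $d\le q(X)=1$, gives $h^0(X,K_X+H+\Gamma)\ge 3-1=2$, against Lemma \ref{lem: Gamma}. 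The key structural point you are missing is that the disconnectedness of $H+\Gamma$ costs at most $d\le 1$, which is absorbed by $q(X)=1$; your restriction to a single curve $\Gamma$ throws away the contribution $h^0(H,\omega_H)=2$ of $H$.

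The irreducibility of $C$ also does not require the delicate pole bookkeeping you anticipate: once each component $\Gamma_i$ of $C$ is known to map onto a ruling, that ruling meets both $\Delta_1$ and $\Delta_2$, so $\Gamma_i$ meets both $H_1$ and $H_2$; hence $H+\Gamma_1+\Gamma_2$ is connected with $p_a\ge 3$, and Lemma \ref{lem: omegaD} again gives $h^0(X,K_X+H+\Gamma_1+\Gamma_2)\ge 2$, while the proof of Lemma \ref{lem: Gamma} applies verbatim because $\Gamma_1+\Gamma_2\preceq\ol R_g$ still gives $K_X+H+\Gamma_1+\Gamma_2\preceq 2(K_X+D)$. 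One last small repair to your residue step in the intersecting case: you should test $\Gamma$ against all of $H_1\cup H_2$, not just the dominating components $S_1\cup S_2$, since $\Gamma$ might meet $H$ only in $g$-contracted components; the argument survives because the residues of $g^*\tau_2$ along \emph{every} component of $H_i$ (dominating or contracted, by the local computation $g^*\frac{dv}{v}=\beta\frac{dx}{x}+\frac{db}{b}$ with $\beta>0$) are positive multiples of the residue along $\Delta_i$, so no cancellation can occur at a point of $\Gamma\cap H_i$.
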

\begin{proof}
(i) Let $\Gamma$ be an irreducible  component of $C$. Since $g(\Gamma)$ is a curve not contained in $\Delta$ and since $\Delta$ is the union of two fibers of the first projection $\mathbb G^2_m\to \mathbb G_m$ and two fibers of the second projection, $g(\Gamma)\cap \Delta$ contains at least two distinct points. So $\Gamma\cap H$ also contains at least two distinct points, and therefore $H\Gamma\ge 2$ and
$p_a(H+\Gamma)= p_a(H)+p_a(\Gamma)+H\Gamma-1\ge p_a(H)+1$. Since both the reduced  divisors $H$ and $H+\Gamma$ are connected by Lemma \ref{lem: connected}, we have $p_a(H)=h^0(H,\omega_H)$ and
 $h^0(H+\Gamma, \omega_{H+\Gamma})=p_a(H+\Gamma)\ge h^0(H,\omega_H) +1=2$,
 where the last equality follows from Lemma \ref{lem: connected}. 
Now  Lemma \ref{lem: omegaD} gives  $h^0(X, K_X+H+\Gamma)= h^0(H+\Gamma, \omega_{H+\Gamma})\ge 2$ (recall that $p_g(X)=0$ by Lemma \ref{lem: pg0}), contradicting Lemma \ref{lem: Gamma}.
\smallskip

\noindent (ii) Let again $\Gamma$ be a component of $C$ such that  $g(\Gamma)$ is not a ruling of $Z$. Then $\Gamma$ dominates $A$ and therefore $p_a(\Gamma)\ge 1$.
If $\Gamma$ is disjoint from $H$, then $h^0(H+\Gamma, \omega_{H+\Gamma})=h^0(H,\omega_H)+h^0(\Gamma,\omega_{\Gamma})\ge 3$ by Lemma \ref{lem: connected}.
If $\Gamma$ intersects $H$, then it intersects both $H_1$ and $H_2$, since they support the numerically equivalent divisors $g^*\Delta_1$ and $g^*\Delta_2$. So $\Gamma+H$ is connected and $h^0(\Gamma +H,\omega_{\Gamma+H})=p_a(\Gamma+H)\ge p_a(\Gamma)+p_a(H)+1=3$. In either case, Lemma \ref{lem: omegaD} gives $h^0(X, K_X+H+\Gamma)\ge 2$, contradicting Lemma \ref{lem: Gamma}. So we conclude that $g(\Gamma)$ is a ruling of $Z$. 
Assume that $C$ has at least two components $\Gamma_1$ and $\Gamma_2$: then the same argument as above gives $h^0(X, K_X+H+\Gamma_1+\Gamma_2)\ge 2$, contradicting Lemma \ref{lem: Gamma} again.
\end{proof}

Now we consider the Stein factorization $X\overset{\nu}{\to} \ol X\overset{\ol g}{\to}Z$ of $g$.
\begin{lm} \label{lem: etale}
The morphism $\ol g$ is \'etale over $A(V)$.
\end{lm}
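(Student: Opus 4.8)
I want to show that the finite part $\ol g$ of the Stein factorization $X\overset{\nu}{\to}\ol X\overset{\ol g}{\to}Z$ is étale over $A(V)=Z\setminus\Delta$. Since $g$ is generically finite and surjective, $\ol g$ is a finite morphism between (possibly singular) normal projective surfaces, so it is automatically flat in codimension $1$ and the issue is purely about ramification over the open part $A(V)$. Étaleness over $A(V)$ means $\ol g$ is unramified there; equivalently, the ramification divisor $R_{\ol g}$ of $\ol g$ has no component dominating a divisor of $A(V)$, i.e. every ramification component of $\ol g$ maps into $\Delta$.

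**Strategy via the logarithmic ramification formula.** The plan is to read off the ramification of $g$ from the identity $K_X+D\sim\ol R_g$ (which holds because $K_Z+\Delta=0$), and then transfer this to $\ol g$ using $\nu$. By Lemma~\ref{lem: Rg}, for an irreducible divisor $\Gamma$ with $g(\Gamma)\not\subseteq\Delta$, $\Gamma$ is a component of $\ol R_g$ iff it is a component of $D+R_g$; and since such a $\Gamma$ is \emph{not} a component of $D$ (because $g(\Gamma)\not\subseteq\Delta$ forces $\Gamma\not\subseteq H$, and $D\succeq H$ with the remaining components of $D$ also landing in $\Delta$), this says $\Gamma$ is a ramification component of $g$ precisely when it is a component of $\ol R_g$. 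Now I invoke Lemma~\ref{lem: C}: when $q(V)=0$ there are \emph{no} components of $\ol R_g$ outside $H$ that are not contracted, so $g$ (hence $\ol g$) has no horizontal ramification over $A(V)$ at all. When $q(V)=1$, the only possible such component $C$ is irreducible with $g(C)$ a \emph{ruling} of $Z$, i.e. $g(C)$ is contained in a single fibre of $Z\to A$; but a ruling is not contained in $A(V)$-- it meets $\Delta$-- and more to the point $g(C)$ is a fibre, whose generic point lies over $A(V)$, so I must rule this out separately.

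**Handling the residual ramification when $q(V)=1$.** The delicate case is showing that the branch ruling cannot actually occur, so that $\ol g$ is genuinely étale over all of $A(V)$. Here I expect to use the structure $Z=\mathbb P(\OO_A\oplus L)$ together with the fact that the étale locus must be a subgroup-compatible cover: after proving $\ol g$ is étale \emph{in codimension one} over $A(V)$ (no horizontal branching), I would combine this with Proposition~\ref{prop: quasiab}(ii), which guarantees that a finite étale cover of a quasi-abelian variety is again quasi-abelian, together with the universal property of the quasi-Albanese map (Theorem~\ref{thm: quasiA}(i)). Concretely: if $\ol g$ were étale over $A(V)$, then $\ol g\inv(A(V))$ is a quasi-abelian variety mapping finitely to $A(V)$, the composite $V\to \ol g\inv(A(V))$ factors through $a_V$, and this forces $\deg\ol g=1$; so the remaining work is precisely to exclude branching along the ruling $g(C)$. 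I anticipate this is done by a local monodromy/topological argument near the ruling (the cover $\ol g$, being étale over the complement of finitely many points in that ruling, would have to be ramified \emph{only} along a fibre, which is incompatible with the triviality of the relevant fundamental-group data of $A=A(X)$ combined with the $\mathbb G_m$-structure of the fibres).

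**Main obstacle.** The genuinely hard part is the $q(V)=1$ case: translating Lemma~\ref{lem: C}(ii)'s ``$g(C)$ is a ruling'' into a contradiction with étaleness. Lemma~\ref{lem: C} only bounds \emph{horizontal} ramification (components dominating $A$); it explicitly allows one vertical ramification curve over a fibre. To upgrade to étaleness over the \emph{whole} of $A(V)$ I cannot argue purely numerically-- I expect to need the topological input flagged in the section introduction (``in case $q(V)=1$ this requires also a topological argument''). The cleanest route is: restrict $\ol g$ over $A(V)$, observe it is étale away from finitely many fibres, analyse the local fundamental group of $A(V)=\mathbb G_m$-bundle over the punctured base near the potential branch fibre, and show that any nontrivial étale-in-codimension-one extension branched along a single fibre would contradict either the connectedness/irreducibility established for $H_i$ in Lemma~\ref{lem: connected} or the subgroup structure forced by Proposition~\ref{prop: quasiab}. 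Once branching along the ruling is excluded, $\ol g$ is unramified over $A(V)$, hence étale there, completing the proof.
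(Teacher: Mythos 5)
Your reduction is sound as far as it goes: for $q(V)=0$ you correctly combine Lemma~\ref{lem: Rg} with Lemma~\ref{lem: C}(i) and (implicitly) purity of the branch locus --- since $\ol X$ is normal and $A(V)$ is smooth, unramifiedness in codimension one over $A(V)$ does give \'etaleness there --- and this matches the paper. But for $q(V)=1$ your proof stops exactly where the lemma's real content begins. You correctly isolate the obstacle (a possible ramification curve $\Gamma\preceq R_g$ with $g(\Gamma)$ a ruling $\Phi$ of $Z$), but you only \emph{announce} a ``local monodromy/topological argument'' without carrying it out, and the hints you give would not suffice: nothing about ``the triviality of the relevant fundamental-group data of $A$'' by itself forbids a finite cover of $A(V)$ branched exactly along $\Phi\cap A(V)$, since a ruling is a perfectly good divisor in $A(V)$ along which covers can a priori branch. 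Moreover, your suggestion to invoke Proposition~\ref{prop: quasiab}(ii) and the universal property of $a_V$ to force $\deg\ol g=1$ \emph{presupposes} \'etaleness over $A(V)$, which is what is to be proved; in the paper that deduction occurs only afterwards, in the proof of Theorem~A, so it cannot be used to close the gap here.

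The paper's actual argument, which your sketch misses, has three concrete ingredients. First, assuming the bad curve $\Gamma$ exists, $\ol g$ restricts to a connected \'etale cover $q\colon\wt W\to W:=A(V)\setminus\Phi$ of degree $m=\deg g>1$, and --- the key geometric input --- the preimage in $X$ of a ruling of $Z$ is a fiber of the Albanese map $a_X$, hence connected; therefore $q$ restricts to a \emph{connected} degree-$m$ cover of a $\mathbb G_m$-fiber, so the subgroup $N\le\pi_1(W)$ classifying $q$ satisfies $N\cap\pi_1(\mathbb G_m)=\langle\gamma^m\rangle$ for a generator $\gamma$. Second, a group-theoretic step: since $\pi_1(\mathbb G_m)$ is central in $\pi_1(W)$ and $1,\gamma,\dots,\gamma^{m-1}$ represent distinct cosets of $N$, the subgroup $N$ is normal with cyclic quotient of order $m$; as $\pi_1(W)\to\pi_1(A(V))$ is abelianization, $N$ is the preimage of an index-$m$ subgroup of $\pi_1(A(V))$, so $q$ extends to an \'etale cover $q'\colon W'\to A(V)$. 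Third, by uniqueness of extensions to analytically branched coverings of $Z$ (\cite[Thm.~3.4]{DG94}), $\ol g$ restricted over $A(V)$ must coincide with $q'$, hence is \'etale there, so the hypothetical branching along $\Phi$ never materializes. None of these steps (connectedness via Albanese fibers, the centrality/abelianization argument, and the branched-cover uniqueness theorem) appears in your proposal, so as written the $q(V)=1$ case is a genuine gap rather than a proof.
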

\begin{proof}
Let $m:=\deg g$; if $m=1$ then $\ol g$ is an isomorphism and the claim is of course true. So we may assume  $m>1$. 

The map $\ol g$ is finite by construction, so by purity of the branch locus it is enough to show that there is no component of the (usual) ramification divisor  of $g$ that is not contained in $H$ and is not contracted to a point. By Lemma \ref{lem: Rg} such a curve is a component of the logarithmic ramification divisor $\ol R_g$. So if $q(V)=0$ the statement follows directly from Lemma \ref{lem: C}. Therefore we assume for the rest of the proof that $q(V)=1$.

Again by Lemma \ref{lem: C}, if $\ol g$ is not \'etale then there is exactly one irreducible curve $\Gamma$ in $R_g$ such that $\Gamma$ is not contained in $H$ and is not contracted by $g$, and the image of $\Gamma$ is a ruling $\Phi$ of $Z$. So $\ol g$ restricts to a connected  \'etale cover $q\colon \wt W \to W:=A(V)\setminus \Phi$. 
The preimage in $X$ of a ruling of $Z$ is a fiber of the Albanese map $a_X\colon X\to A(V)$ and so it is connected. So $q$ restricts to a connected cover of $\mathbb G_m$.

Since algebraically trivial line bundles are topologically trivial, Corollary \ref{cor: Z} implies that $Z$ is homeomorphic to $A\times \mathbb P^1$, $A(V)$ is homeomorphic to $A\times \mathbb G_m$ and $W$ is homeomorphic to $(A\setminus\{a\})\times \mathbb P^1$, where $a\in A$ is a point. Fix base points in $\wt W$ and $W$ and denote by $N$ the subgroup of index $m$ of $\pi_1(W)\simeq \pi_1(A\setminus\{a\})\times \pi_1( \mathbb G_m)$ corresponding to the cover $q$.  If $\gamma$ is a generator of $\pi_1(\mathbb G_m)$, we have $N\cap \pi_1(\mathbb G_m)=<\gamma^m>$. So the elements $1, \gamma,\dots \gamma^{m-1}$ represent distinct left cosets  of  $\pi_1(W)$ modulo $N$. Since  $\pi_1(\mathbb G_m)$ is a central subgroup of $\pi_1(W)$, it follows that left and right cosets modulo $N$ coincide, namely $N$ is a normal subgroup of $\pi_1(W)$ and the quotient $\pi_1(W)/N$ is cyclic of order $m$. Since the map $\pi_1(W)\to \pi_1(A(V))$ is just abelianization, $N$ is the preimage of an index $m$ subgroup $\ol N<\pi_1(A(V))$. This shows that $q$ extends to an \'etale cover $q'\colon W'\to A(V)$. Since by \cite[Thm.~3.4]{DG94} both $q$ and $q'$ extend uniquely to an analytically branched covering of $Z$, it follows that $\ol g\colon \ol X\to Z$ extends $q'$, that is, $\ol g$ is \'etale over $A(V)$ as claimed. 
\end{proof}
\subsection{Conclusion}\label{ssec: conclusion}
We are finally ready to complete the proof of Theorem A.
\begin{proof}[Proof of Theorem A]

Consider the case $q(V)=2$ first. By Corollary \ref{thm:qx=2},  we only need to show that all components of $D$ are contracted by $a_X$. So assume for contradiction that there is an irreducible component $\Gamma$ of $D$ that is not contracted by $a_X$ and denote by $\ol \Gamma$ its image in $A(X)$; note that the map $ \Gamma\to \ol \Gamma$ is birational. The geometric genus  of $ \ol\Gamma$ is  positive  and if  it is equal to 1, then $\ol \Gamma$ is a translate of an abelian subvariety of $A(X)$, so in particular it is smooth. 

Assume first that $p_a(\Gamma)=1$: then $\Gamma$ is smooth of genus 1 and $\Gamma \to \ol \Gamma$ is an isomorphism. It follows that $a_X^*\ol \Gamma \preceq K_X+\Gamma\preceq K_X+D$. Since $h^0(A(X), 2\ol \Gamma)=2$, we have a contradiction to $\ol P_2(V)=2$.  If $p_a(\Gamma)\ge 2$, then by Serre duality and Riemann--Roch for all $\alpha\in \Pic^0(X)$  we have $h^0(X, K_X+\Gamma+\alpha)\ge \chi(K_X+\Gamma)=p_a(\Gamma)-1\ge 1$. Lemma \ref{lem: -Z} gives $h^0(X, 2(K_X+\Gamma))\ge 2$, contradicting again the assumption $\ol P_2(V)=1$. So all the components of $D$ are $a_X$-exceptional.  
\smallskip

From now  we assume $q(V)\le 1$.
By Lemma \ref{lem: etale}, the quasi-Albanese map $a_V\colon V\to A(V)$ factors through an \'etale cover of degree $m:=\deg g$. Since a finite  \'etale cover of a quasi-abelian variety is also a quasi-abelian variety (Proposition \ref{prop: quasiab}) by the universal property of $a_V$ (cf. Theorem \ref{thm: quasiA}) we have $m=1$, namely $g$ is birational. To complete the proof we need to show that all the components of $D-H$  are contracted by $g$. Since  $D-H\preceq \ol R_g$ by Lemma \ref{lem: Rg}, in case $q(V)=0$ the claim follows by Lemma \ref{lem: C}.

Assume $q(V)=1$. By Lemma \ref{lem: C} there is at most an irreducible curve $\Gamma \preceq D-H$ such that $\Gamma$ is not contracted by $g$, and $g(\Gamma)$ is a ruling $\Phi$ of $Z$. We are going to show that $g^*\Phi\preceq \ol R_g$, hence $g^*(2\Phi)\preceq 2\ol R_g\sim 2(K_X+D)$. Since $h^0(Z, 2\Phi)=2$, this contradicts  the assumption $\ol P_2(V)=1$.

Write $g^*\Phi=\Gamma + \sum_i\alpha_iC_i$, where the $C_i$ are distinct irreducible curves contracted by $g$ and $\alpha_i \in \mathbb N_{>0}$. Let $u,v$ be local coordinates on $Z$ centered at the point $P_i:=g(C_i)$ such that $u=0$ is the ruling $\Phi$ of $Z$; if in addition $P_i\in \Delta$, we assume that $v$ is a local equation of $\Delta$. At a general point of $C_i$ we have $u=x^{\alpha_i}a$, $v=x^{\beta_i}b$, where $x$ is a local equation for $C_i$, $a,b$ are non zero regular functions and $\beta_i>0$ is an integer.  A simple computation gives
\begin{equation}\label{eq: forms}g^*\frac{dv}{v}=\beta_i\frac{dx}{x}+\frac{db}{b}, \qquad   g^*\left(\frac{du}{u}\wedge \frac{dv}{v}\right)=\frac{dx}{x}\wedge \left(\alpha_i\frac{db}{b}-\beta_i\frac{da}{a}\right).
\end{equation}
Note that $\alpha_i\frac{db}{b}-\beta_i\frac{da}{a}$ is a regular 1-form. 
So if $P_i\in \Delta$, then $C_i\preceq D$,  $K_Z+\Delta$ is locally generated by $u\wedge \frac{dv}{v}$, $K_X+D$ is locally generated by $\frac{dx}{x}\wedge dy$ and $C_i$ appears in $\ol R_g$ with multiplicity $\ge \alpha_i$.

If $P_i\notin \Delta$, then $K_Z+\Delta$ is locally generated by $du\wedge dv$, $K_X+D$ is locally generated by $\frac{dx}{x}\wedge dy$ or by $dx\wedge dy$ according to whether $C_i\preceq D$ or not; in either case $C_i$ appears in $\ol R_g$ with multiplicity $\ge \alpha_i+\beta_i-1\ge \alpha_i$. Summing up, we have shown $g^*\Phi\preceq \ol R_g$, as promised.
\end{proof}
\bibliography{QuasiAbelian.bib}

\end{document}